\newcommand{\down}[1]{\raisebox{-0.5ex}{#1}}
\newcommand{\ddown}[1]{\raisebox{-1ex}{#1}}
\newtheorem{theorem}{Theorem}
\newtheorem{proposition}[theorem]{Proposition}
\newtheorem{assumption}{Assumption}
\newtheorem{definition}[theorem]{Definition}
\newtheorem*{remark}{Remark}
\newcommand{\sign}{\operatorname{sgn}}
\newcommand{\N}{\mathbb{N}}
\newcommand{\R}{\mathbb{R}}
\newcommand{\Rquer}{\overline{\R}}
\newcommand{\ind}{\boldsymbol{1}}
\newcommand{\eps}{\varepsilon}
\begin{document}

\title{Confidence Sets Based on Thresholding Estimators in High-Dimensional
  Gaussian Regression Models} 

\author{Ulrike Schneider\thanks{Institute for Mathematical Methods in
    Economics, Vienna University of Technology, 
    Argentinierstra{\ss}e 8/E105-2, A-1040 Vienna,
    ulrike.schneider@tuwien.ac.at}\\ 
    Vienna University of Technology}

\date{}

\maketitle

%%%%%%%%%%%%%%%%%%%%%%%%%%%%%%%%%%%%%%%%%%%%%%%%%%%%%%%%%%%%%%%%%%%%%%%%%%%%
%%%%%%%%%%%%%%%%%%%%%%%%%%%%%%%%%%%%%%%%%%%%%%%%%%%%%%%%%%%%%%%%%%%%%%%%%%%%
%%%%%%%%%%%%%%%%%%%%%%%%%%%%%%%%%%%%%%%%%%%%%%%%%%%%%%%%%%%%%%%%%%%%%%%%%%%%
%%%%%%%%%%%%%%%%%%%%%%%%%%%%%%%%%%%%%%%%%%%%%%%%%%%%%%%%%%%%%%%%%%%%%%%%%%%%
\begin{abstract}
We study confidence intervals based on hard-thresholding,
soft-thresholding, and adaptive soft-thresholding in a linear
regression model where the number of regressors $k$ may depend on and
diverge with sample size $n$. In addition to the case of known error
variance, we define and study versions of the estimators when the
error variance is unknown. In the known-variance case, we provide an
exact analysis of the coverage properties of such intervals in finite
samples. We show that these intervals are always larger than the
standard interval based on the least-squares
estimator. Asymptotically, the intervals based on the thresholding
estimators are larger even by an order of magnitude when the
estimators are tuned to perform consistent variable selection. For the
unknown-variance case, we provide non-trivial lower bounds and a small
numerical study for the coverage probabilities in finite samples. We
also conduct an asymptotic analysis where the results from the
known-variance case can be shown to carry over asymptotically if the
number of degrees of freedom $n-k$ tends to infinity fast enough in
relation to the thresholding parameter.
\end{abstract}
%%%%%%%%%%%%%%%%%%%%%%%%%%%%%%%%%%%%%%%%%%%%%%%%%%%%%%%%%%%%%%%%%%%%%%%%%%%%
%%%%%%%%%%%%%%%%%%%%%%%%%%%%%%%%%%%%%%%%%%%%%%%%%%%%%%%%%%%%%%%%%%%%%%%%%%%%
%%%%%%%%%%%%%%%%%%%%%%%%%%%%%%%%%%%%%%%%%%%%%%%%%%%%%%%%%%%%%%%%%%%%%%%%%%%%
%%%%%%%%%%%%%%%%%%%%%%%%%%%%%%%%%%%%%%%%%%%%%%%%%%%%%%%%%%%%%%%%%%%%%%%%%%%%

%%%%%%%%%%%%%%%%%%%%%%%%%%%%%%%%%%%%%%%%%%%%%%%%%%%%%%%%%%%%%%%%%%%%%%%%%%%%
%%%%%%%%%%%%%%%%%%%%%%%%%%%%%%%%%%%%%%%%%%%%%%%%%%%%%%%%%%%%%%%%%%%%%%%%%%%%
%%%%%%%%%%%%%%%%%%%%%%%%%%%%%%%%%%%%%%%%%%%%%%%%%%%%%%%%%%%%%%%%%%%%%%%%%%%%
%%%%%%%%%%%%%%%%%%%%%%%%%%%%%%%%%%%%%%%%%%%%%%%%%%%%%%%%%%%%%%%%%%%%%%%%%%%%
\section{Introduction}
\label{intro}
%%%%%%%%%%%%%%%%%%%%%%%%%%%%%%%%%%%%%%%%%%%%%%%%%%%%%%%%%%%%%%%%%%%%%%%%%%%%
%%%%%%%%%%%%%%%%%%%%%%%%%%%%%%%%%%%%%%%%%%%%%%%%%%%%%%%%%%%%%%%%%%%%%%%%%%%%
%%%%%%%%%%%%%%%%%%%%%%%%%%%%%%%%%%%%%%%%%%%%%%%%%%%%%%%%%%%%%%%%%%%%%%%%%%%%
%%%%%%%%%%%%%%%%%%%%%%%%%%%%%%%%%%%%%%%%%%%%%%%%%%%%%%%%%%%%%%%%%%%%%%%%%%%%

We study confidence sets based on thresholding estimators such as
hard-thresholding, soft-thresholding, and adaptive soft-thresholding
in a Gaussian linear regression model when the number of regressors
can be large. When the regressors are orthogonal, the estimators we
consider can be viewed as penalized least-squares estimators, with
soft-thresholding then coinciding with the Lasso (introduced by
\citealp*{frafri93}, \citealp*{allruz94}, and \citealp*{tib96}) and
adaptive soft-thresholding coinciding with the adaptive Lasso
\citep[introduced by][]{zou06}. Thresholding estimators have of course
been discussed earlier in the context of model selection
\citep[see][]{baupoehac88} and in the context of wavelets \cite[see,
  e.g.][]{donetal95}.

These types of estimators also widely gained importance in the context
of econometric models: \cite{belche11} provide a discussion of using
Lasso-type estimators in econometrics, including applications to
earning regressions, instrument selection, and cross-country growth
regressions. \cite{cankni13} introduce Bridge estimators (of which the
Lasso is a special case) to the to the framework of unit root tests,
whereas \cite{canzha14} investigate the elastic net estimator (merging
the Lasso with ridge regression) in the context of generalized method
of moments estimators. For using thresholding estimators in connection
with econometric models, see e.g.\ \cite{baing08} who use hard- and
soft-thresholding for factor forecasting.

When using and applying these kinds of estimators to econometric (and
other) models, it is of course of importance to know about the
statistical performance of the estimator in question, in particular of
how to perform valid inference (see also \cite{beretal13} for a
treatment within a different context). For this, knowledge of the
distributional properties of the particular estimator is
needed. Contributions concerning such properties of thresholding and
penalized least-squares estimators include the following:
\cite{knifu00} derive the asymptotic distribution of the Lasso and
related estimators when they are tuned to act as a conservative
variable selection procedure, while the asymptotic distribution of the
Lasso and the adaptive Lasso estimator when tuned as a consistent
variable selection procedure is considered in \cite{zou06}. The
asymptotic distribution of the so-called smoothly clipped absolute
deviation (SCAD) estimator is derived in \cite{fanli01} and
\cite{fanpen04} also under consistent tuning. Following this work,
many papers have been published that study the asymptotic distribution
of various penalized least-squares estimators when those are tuned to
act as a consistent variable selection procedure, see the introduction
in \cite{poesch09} for a partial list. With the exception of
\cite{knifu00}, all these papers consider a so-called fixed-parameter
framework for the asymptotic results. But as pointed out in
\cite{leepoe05}, such a framework may be highly misleading in the
context of variable selection procedures and penalized least-squares
estimators. To this end, \cite{poelee09} and \cite{poesch09} carry out
a detailed study of the finite-sample as well as large-sample
distribution of various penalized least-squares estimators, adopting a
moving-parameter framework for the asymptotic results. While these two
papers are set in the framework of an orthogonal linear regression
model with a fixed number of parameters and known error variance,
\cite{poesch11} investigate finite-sample and large-sample
distributions for thresholding estimators also for non-orthogonal
regressors and a potentially diverging number of parameters.

Given all these distributional results in finite samples as well as
asymptotically within a moving parameter framework, a natural question
is of course what these results imply for confidence sets, as this is
an important issue for statistical inference. To address this
question, \cite{poesch10} consider confidence intervals based on
hard-thresholding, Lasso, and adaptive Lasso within an orthogonal
linear regression model with normal errors. \emph{In the present
  paper, we extend these results relaxing the condition of
  orthogonality as well as allowing for a high-dimensional framework
  where the number of parameters $k$ may depend on and diverge with
  sample size $n$. The estimators we consider are hard-, soft-, and
  adaptive soft-thresholding acting componentwise.} In addition to the
case of known error variance $\sigma^2$, we define versions of the
estimators when the error variance is unknown. We also make use of
some distributional results derived in \cite{poesch11}.

Our main contributions and findings are as follows. In the case of
known error variance, we derive explicit expressions for the minimal
coverage probabilities of fixed (non-random) width confidence
intervals in finite samples based on the estimators in question. We
show that symmetric intervals are the shortest. The interval based on
soft-thresholding is smaller than the one based on adaptive
soft-thresholding which in turn is smaller than the one based on
hard-thresholding. Compared to the standard interval based on the
least-squares estimator, the intervals based on the thresholding
estimators are all larger in finite samples. Asymptotically, two
different pictures arise: when the estimators are tuned to perform
conservative model selection, the lengths of the intervals are all of
the same order which essentially is $n^{-1/2}$. If the estimators are
tuned to carry out consistent selection, however, it turns out the
lengths of the intervals based on the thresholding estimators are
larger by an order of magnitude compared to the standard one based on
least-squares estimation.

In the case of unknown error variance, we consider symmetric intervals
of random width (where the randomness is introduced by scaling the
length with an estimate of the error standard deviation). We provide
lower and upper bounds for the minimal coverage probabilities of the
intervals based on the thresholding estimators under
consideration. When comparing the lengths of the intervals based on
the thresholding estimators to the standard one based on least-squares
estimation, asymptotically we arrive at the same conclusions as in the
known-variance case. We also find that the effect of having to
estimate the error variance disappears asymptotically only the number
of degrees of freedom $n-k$ diverges fast enough in relation to the
tuning (thresholding) parameter of the estimators.

The paper is organized as follows. We introduce the model and define
the estimators and some notation in Section~\ref{model}.
Sections~\ref{fsdist} and \ref{lsdist} derive auxiliary results for
the finite- and large sample distributions of the thresholding
estimators which are needed later for the study of confidence
intervals in the main Section~\ref{confs}. Section~\ref{summ} contains
a short summary, and a table with an overview of assumptions and
results is given in Appendix~\ref{table}. Finally, proofs are
relegated to Appendix~\ref{proofs}.

%%%%%%%%%%%%%%%%%%%%%%%%%%%%%%%%%%%%%%%%%%%%%%%%%%%%%%%%%%%%%%%%%%%%%%%%%%%%
%%%%%%%%%%%%%%%%%%%%%%%%%%%%%%%%%%%%%%%%%%%%%%%%%%%%%%%%%%%%%%%%%%%%%%%%%%%%
%%%%%%%%%%%%%%%%%%%%%%%%%%%%%%%%%%%%%%%%%%%%%%%%%%%%%%%%%%%%%%%%%%%%%%%%%%%%
%%%%%%%%%%%%%%%%%%%%%%%%%%%%%%%%%%%%%%%%%%%%%%%%%%%%%%%%%%%%%%%%%%%%%%%%%%%%
\section{The Model and the Estimators}
\label{model}
%%%%%%%%%%%%%%%%%%%%%%%%%%%%%%%%%%%%%%%%%%%%%%%%%%%%%%%%%%%%%%%%%%%%%%%%%%%%
%%%%%%%%%%%%%%%%%%%%%%%%%%%%%%%%%%%%%%%%%%%%%%%%%%%%%%%%%%%%%%%%%%%%%%%%%%%%
%%%%%%%%%%%%%%%%%%%%%%%%%%%%%%%%%%%%%%%%%%%%%%%%%%%%%%%%%%%%%%%%%%%%%%%%%%%%
%%%%%%%%%%%%%%%%%%%%%%%%%%%%%%%%%%%%%%%%%%%%%%%%%%%%%%%%%%%%%%%%%%%%%%%%%%%%

\setcounter{assumption}{12} % Assumption M
\begin{assumption}

The model we consider is the linear regression model 
$$
y=X\theta +u,
$$ 
where $y$ is the $n \times 1$ data vector, $X$ is a non-stochastic $n
\times k$ matrix of rank $k \geq 1$ containing the regressors, and $u
\sim N(0,\sigma^2 I_n)$. 

\end{assumption}
We allow $k$, the number of columns of $X$, as well as the entries of
$y$, $X$, and $u$ to depend on sample size $n$, although we almost
always suppress this dependence on $n$ in the notation. Note that this
framework allows for high-dimensional regression models, where the
number of regressors $k$ may diverge, as well as for the more
classical situation where $k$ remains fixed. Let
\begin{eqnarray*}
\hat\theta_{LS} & = & \left(X'X\right)^{-1}X'y \\
\hat\sigma^2 & = & (y-X\hat\theta_{LS})'(y-X\hat\theta_{LS})/(n-k)
\end{eqnarray*}
denote the least-squares estimator for $\theta$ and the associated
estimator for $\sigma^2$, the latter being defined only if
$n>k$. Furthermore, we denote by $\xi_{i,n}$ the non-negative square
root of $((X'X/n)^{-1})_{ii}$, the $i$-th diagonal element of
$(X'X/n)^{-1}$. Note that in the textbook case where $k$ is fixed and
$X'X/n$ is assumed to converge to a finite positive definite matrix,
$\xi_{i,n}$ asymptotically settles at a finite value greater than
zero.

\begin{definition}
\label{ests}
The hard-thresholding estimator $\tilde\theta_H =
(\tilde\theta_{H,1},\dots,\tilde\theta_{H,k})'$ is defined via its
components as follows
$$
\tilde\theta_{H,i} = \tilde\theta_{H,i}(\eta_{i,n}) =
\hat\theta_{LS,i} \, \ind\left(\left|\hat\theta_{LS,i}\right| > 
\hat\sigma\xi_{i,n}\eta_{i,n}\right),
$$ 
where the tuning or thresholding parameters $\eta_{i,n}$ are positive
real numbers that may change over components and $\hat\theta_{LS,i}$
denotes the $i$-th component of the least-squares estimator. We also
consider its infeasible counterpart $\hat\theta_H =
(\hat\theta_{H,i},\dots,\hat\theta_{H,i})$ given by
$$
\hat\theta_{H,i} = \hat\theta_{H,i}(\eta_{i,n}) = 
\hat\theta_{LS,i} \, \ind\left(\left|\hat\theta_{LS,i}\right| > 
\sigma\xi_{i,n}\eta_{i,n}\right),
$$ 
assuming knowledge of the error variance $\sigma^2$. The
soft-thresholding estimator\footnote{If the regressor matrix $X$
  contains orthogonal columns, the soft-thresholding estimator
  coincides with the Lasso, the Dantzig and the Elastic Net
  estimator.} $\tilde\theta_S$ and its infeasible counterpart
$\hat\theta_S$ are given by the components
\begin{eqnarray*}
\tilde\theta_{S,i} & = & \tilde\theta_{S,i}(\eta_{i,n}) \; = \;
\sign(\hat\theta_{LS,i})\left(\left|\hat\theta_{LS,i}\right| - 
\hat\sigma\xi_{i,n}\eta_{i,n}\right)_{+} \\
& = & \left\{
\begin{array}{cc}
0 & \text{ if } |\hat\theta_{LS,i}| \leq \hat\sigma\xi_{i,n}\eta_{i,n} \\ 
\hat\theta_{LS,i} - \sign(\hat\theta_{LS,i})\hat\sigma\xi_{i,n}\eta_{i,n} &
\text{ if } |\hat\theta_{LS,i}| > \hat\sigma \xi_{i,n}\eta_{i,n} 
\end{array}
\right.
\end{eqnarray*}
and
\begin{eqnarray*}
\hat\theta_{S,i} & = & \hat\theta_{S,i}(\eta_{i,n}) \; = \;
\sign(\hat\theta_{LS,i})\left(\left|\hat\theta_{LS,i}\right| - 
\sigma\xi_{i,n}\eta_{i,n}\right)_{+} \\
& = & \left\{
\begin{array}{cc}
0 & \text{ if } |\hat\theta_{LS,i}| \leq \sigma\xi_{i,n}\eta_{i,n} \\ 
\hat\theta_{LS,i}  - \sign(\hat\theta_{LS,i})\sigma\xi_{i,n}\eta_{i,n} &
\text{ if } |\hat\theta_{LS,i}| > \sigma\xi_{i,n}\eta_{i,n}, 
\end{array}
\right.
\end{eqnarray*}
where $(\cdot)_{+} = \max(\cdot,0)$. Finally, the adaptive
soft-thresholding estimator\footnote{If the regressor matrix $X$
  contains orthogonal columns, the adaptive soft-thresholding estimator
  coincides with the adaptive Lasso estimator.} $\tilde\theta_{AS}$
and its infeasible counterpart $\hat\theta_{AS}$ are defined
componentwise via
\begin{eqnarray*}
\tilde\theta_{AS,i} & = & \tilde\theta_{AS,i}(\eta_{i,n}) \; = \; 
\hat\theta_{LS,i} \left(1 - 
\hat\sigma^{2}\xi_{i,n}^2\eta_{i,n}^2/\hat\theta_{LS,i}^2\right)_{+} \\ 
& = & \left\{
\begin{array}{cc}
0 & \text{ if } |\hat\theta_{LS,i}| \leq \hat\sigma\xi_{i,n}\eta_{i,n} \\ 
\hat\theta_{LS,i} - \hat\sigma^2\xi _{i,n}^2\eta_{i,n}^2/\hat\theta_{LS,i} & 
\text{ if } |\hat\theta_{LS,i}| > \hat\sigma\xi_{i,n}\eta_{i,n}
\end{array}
\right.
\end{eqnarray*}
and 
\begin{eqnarray*}
\hat\theta_{AS,i} & = & \hat\theta_{AS,i}(\eta_{i,n}) \; = \; 
\hat\theta_{LS,i} \left(1 - 
\sigma^2\xi_{i,n}^2\eta_{i,n}^2/\hat\theta_{LS,i}^2\right)_{+} \\ 
& = & \left\{
\begin{array}{cc}
0 & \text{ if } |\hat\theta_{LS,i}| \leq \sigma\xi_{i,n}\eta_{i,n} \\ 
\hat\theta_{LS,i} - \sigma^2\xi _{i,n}^2\eta_{i,n}^2/\hat\theta_{LS,i} & 
\text{ if } |\hat\theta_{LS,i}| > \sigma\xi_{i,n}\eta_{i,n}.
\end{array}
\right.
\end{eqnarray*}

\end{definition}
Note that $\tilde\theta_H$, $\tilde\theta_S$, and $\tilde\theta_{AS}$
as well as their infeasible counterparts are equivariant under scaling
of the columns of $(y:X)$ by non-zero column-specific scale factors.
We have chosen to let the thresholds $\hat\sigma\xi_{i,n}\eta_{i,n}$
($\sigma\xi_{i,n}\eta_{i,n}$, respectively) depend explicitly on
$\hat\sigma$ ($\sigma$, respectively) and $\xi_{i,n}$ in order to give
$\eta_{i,n}$ an interpretation independent of the values of $\sigma $
and $X$. Often $\eta_{i,n}$ will be chosen independently of $i$, i.e.,
$\eta_{i,n}=\eta_n$ where $\eta_n$ is a positive real number. Clearly,
for the feasible versions we always need to assume $n > k$, whereas
for the infeasible versions $n \geq k$ suffices.

\medskip

Aside from requiring that the regressor matrix $X$ has full column
rank, we essentially have no assumptions on the regressor matrix $X$
except that for simplicity for all asymptotic considerations, we
assume the following.

\setcounter{assumption}{0} % Assumption M
\begin{assumption}
\label{xi}
Let $\xi_{i,n}^2/n = ((X'X)^{-1})_{i,i}$ satisfy
$$
\sup_n\xi_{i,n}^2/n < \infty  
$$
for every fixed $i \geq 1$ satisfying $i \leq k(n)$ for large enough
$n$.

\end{assumption}
Note that Assumption~\ref{xi} is not really restrictive in the sense
that the case excluded by it implies unboundedness of $\xi_{i,n}^2/n$,
which in particular would entail inconsistency of the least-squares
estimator\footnote{In fact, if $k$ is fixed and for each $n$, the
  regressor matrix $X$ changes only by appending an additional row,
  unboundedness of $\xi_{i,n}^2/n$ is impossible since then the
  diagonal elements of $(X'X)^{-1}$ are monotonically decreasing.}.

\medskip

We now turn to some definitions in terms of the asymptotic regimes we
consider. Clearly, all three estimators exhibit positive probability
of being set equal to 0 and in that sense they perform variable
selection. Asymptotically, we will distinguish two different cases
for this. Let $\breve\theta_i$ denote any of the thresholding
estimators introduced above.

\begin{definition}

The case of consistent variable selection occurs when
$$
\lim_{n \to \infty} P_{n,\theta,\sigma}(\breve\theta_i=0) = 1 
\;\; \text{ whenever } \theta_i = 0,
$$ 
in which we shall refer to $\breve\theta_i$ as being
consistently tuned. The other case is the case of
conservative variable selection where
$$
\limsup_{n \to \infty} P_{n,\theta,\sigma}(\breve\theta_i=0) < 1 
\;\; \text{ whenever } \theta_i = 0,
$$ in which we shall call $\breve\theta_i$ conservatively
  tuned. 

\end{definition}
Propositions~4 and 10 in \cite{poesch11} show that $\breve\theta_i$ is
consistently tuned when $n^{1/2}\eta_{i,n} \to \infty$, and
conservatively tuned when $n^{1/2}\eta_{i,n} \to e_i$ with $0 \leq e_i
< \infty$, including the case $e_i = 0$ in which $\breve\theta_i$ can
be shown to be asymptotically equivalent to $\hat\theta_{LS,i}$ (see
Remark~17 in the above reference). Moreover, Theorem~16 in the same
reference shows that $\breve\theta_i$ is consistent for $\theta_i$ in
terms of parameter estimation whenever $\xi_{i,n}\eta_{i,n} \to 0$ (in
fact, the estimator is then even uniformly consistent) and this
assumption will appear as a basic condition in all asymptotic
considerations.

\medskip

We conclude this section by introducing some more notation:
$\hat\theta_i$ denotes any of the estimators $\hat\theta_{H,i}$,
$\hat\theta_{S,i}$, or $\hat\theta_{AS,i}$ and $\tilde\theta_i$ any of
the estimators $\tilde\theta_{H,i}$, $\tilde\theta_{S,i}$, or
$\tilde\theta_{AS,i}$. Let $\Rquer$ be the extended real line $\R \cup
\{-\infty,\infty\}$. Furthermore, $\Phi$ and $\phi$ are the cumulative
distribution function (cdf) and the probability density function (pdf)
of a standard normal distribution, respectively. By $T_m$ and $t_m$ we
denote the cdf and pdf of a $t$-distribution with $m \in \N$ degrees
of freedom, respectively. We use the convention $\Phi (\infty )=1$,
$\Phi(-\infty)=0$ with a similar convention for $T_m$. By $\rho_m$ we
denote the density function of $\sqrt{\chi^2_m/m}$, the square root of
a chi-squared-distributed random variable divided by its degrees of
freedom, $m$. For repeated later use, note that $T_m(x) =
\int_0^\infty \Phi(xs)\rho_m(s)\,ds$. Finally, $\delta_z$ will denote
the measure for pointmass at $z$.

%%%%%%%%%%%%%%%%%%%%%%%%%%%%%%%%%%%%%%%%%%%%%%%%%%%%%%%%%%%%%%%%%%%%%%%%%%%%
%%%%%%%%%%%%%%%%%%%%%%%%%%%%%%%%%%%%%%%%%%%%%%%%%%%%%%%%%%%%%%%%%%%%%%%%%%%%
%%%%%%%%%%%%%%%%%%%%%%%%%%%%%%%%%%%%%%%%%%%%%%%%%%%%%%%%%%%%%%%%%%%%%%%%%%%%
%%%%%%%%%%%%%%%%%%%%%%%%%%%%%%%%%%%%%%%%%%%%%%%%%%%%%%%%%%%%%%%%%%%%%%%%%%%%
\section{Auxiliary Results: Finite-Sample Distributions}
\label{fsdist}
%%%%%%%%%%%%%%%%%%%%%%%%%%%%%%%%%%%%%%%%%%%%%%%%%%%%%%%%%%%%%%%%%%%%%%%%%%%%
%%%%%%%%%%%%%%%%%%%%%%%%%%%%%%%%%%%%%%%%%%%%%%%%%%%%%%%%%%%%%%%%%%%%%%%%%%%%
%%%%%%%%%%%%%%%%%%%%%%%%%%%%%%%%%%%%%%%%%%%%%%%%%%%%%%%%%%%%%%%%%%%%%%%%%%%%
%%%%%%%%%%%%%%%%%%%%%%%%%%%%%%%%%%%%%%%%%%%%%%%%%%%%%%%%%%%%%%%%%%%%%%%%%%%%

\cite{poesch11} derive finite- and large-sample distributions of the
thresholding estimators defined in Definition~\ref{ests} for the case
of known and of unknown error variance. More concretely, the
finite-sample distributions of $\sigma^{-1} \alpha_{i,n} (\hat\theta_i
- \theta_i)$ and $\sigma^{-1} \alpha_{i,n} (\tilde\theta_i -
\theta_i)$ are derived, where $\alpha_{i,n} > 0$ is a non-random
scaling factor. When considering the large-sample distributions, the
scaling factor $\alpha_{i,n}$ is set equal to $n^{1/2}/\xi_{i,n}$ in
case of conservative tuning and equal to $(\xi_{i,n}\eta_{i,n})^{-1}$
in case of consistent tuning. These are shown to correspond to the
uniform convergence rates of the estimators.

In the present paper, to analyze the coverage properties of confidence
sets based on $\hat\theta_i$ in Section~\ref{confknown}, we will make
use of the finite-sample distributions of $\sigma^{-1} \alpha_{i,n}
(\hat\theta_i - \theta_i)$. These distributions were derived in in
Propositions~19-21 \cite{poesch11}. In the unknown-variance case, to
investigate the coverage probabilities of confidence sets based on
$\tilde\theta_i$ in Section~\ref{confunknown}, we will need knowledge
of the distributions of $\hat\sigma^{-1} \alpha_{i,n} (\tilde\theta_i
- \theta_i)$ with random scaling by $\hat\sigma^{-1}$ rather than the
distributions of $\sigma^{-1} \alpha_{i,n} (\tilde\theta_i -
\theta_i)$ with non-random scaling which have been considered in the
above mentioned paper.

To this end, we derive the finite-sample distributions of
$\hat\sigma^{-1} \alpha_{i,n} (\tilde\theta_i - \theta_i)$ in the
following propositions and make some qualitative comparisons to the
distributions of $\sigma^{-1} \alpha_{i,n} (\tilde\theta_i -
\theta_i)$. The corresponding large-sample distributions are
considered in Section~\ref{lsdist}.

We shall suppress the dependence of the distribution function on the
scaling factor $\alpha_{i,n}$ in the notation. Moreover, note that the
distribution functions depend on the parameter $\theta$ only through
the $i$-th component $\theta_i$. We start by considering the
hard-thresholding estimator.

%%%%%%%%%%%%%%%%%%%%%%%%%%%%%%%%%%%%%%%%%%%%%%%%%%%%%%%%%%%%%%%%%%%%%%%%%%%%
%%%%%%%%%%%%%%%%%%%%%%%%%%%%%%%%%%%%%%%%%%%%%%%%%%%%%%%%%%%%%%%%%%%%%%%%%%%%
% hard thresholding FS dist
%%%%%%%%%%%%%%%%%%%%%%%%%%%%%%%%%%%%%%%%%%%%%%%%%%%%%%%%%%%%%%%%%%%%%%%%%%%%
%%%%%%%%%%%%%%%%%%%%%%%%%%%%%%%%%%%%%%%%%%%%%%%%%%%%%%%%%%%%%%%%%%%%%%%%%%%%

\begin{proposition}[Hard-thresholding in finite samples]
\label{fsdistH}
The cdf $\tilde H^i_{H,n,\theta,\sigma} := \tilde
H^i_{H,\eta_{i,n},n,\theta,\sigma}$ of $\hat\sigma^{-1}\alpha_{i,n}
(\tilde\theta_{H,i} - \theta_i)$ is given by
\begin{align}
\label{fscdfH}
\begin{split}
&\tilde H^i_{H,n,\theta,\sigma}(x) \; = \; \int_0^\infty 
\left[ \Phi\big(n^{1/2}xs/(\alpha_{i,n}\xi_{i,n})\big)
\ind\left(|xs/\alpha_{i,n} + \theta_i/\sigma| > \xi_{i,n}s\eta_{i,n}
\right) \right. \\
& + \; \Phi\big(n^{1/2} (-\theta_i/(\sigma \xi_{i,n}) + s\eta_{i,n})\big)
\ind\left(0 \leq xs/\alpha_{i,n} + \theta_i/\sigma \leq \xi_{i,n}s\eta_{i,n}
\right) \\
& \left. + \;
\Phi\big(n^{1/2} (-\theta_i/(\sigma \xi_{i,n}) - s\eta_{i,n})\big)
\ind\left(-\xi_{i,n}s\eta_{i,n} \leq xs/\alpha_{i,n} + \theta_i/\sigma < 0 
\right) \right] \rho_{n-k}(s) \, ds,
\end{split}
\end{align}
or equivalently, its measure is given by
\begin{align}
\label{fspdfH0}
\begin{split}
d&\tilde H^i_{H,n,\theta,\sigma}(x) = 
\int_0^\infty \left[\Phi\big(n^{1/2}s\eta_{i,n}\big) - 
\Phi\big(-n^{1/2}s\eta_{i,n}\big) \right] \rho_{n-k}(s)\,ds\,d\delta_0(x) \\
& + \ind\left(|x| > \alpha_{i,n}\xi_{i,n}\eta_{i,n}\right)
\int_0^\infty (n^{1/2}s/(\alpha_{i,n}\xi_{i,n})) 
\phi\big(n^{1/2}xs/(\alpha_{i,n}\xi_{i,n})\big)
\rho_{n-k}(s) \, ds dx
\end{split}
\end{align}
for $\theta_i = 0$, and by
\begin{align}
\label{fspdfHnot0}
\begin{split}
d&\tilde H^i_{H,n,\theta,\sigma}(x) = 
\; \ind\{-\sign(\theta_i)\,x \geq 0\} \, \alpha_{i,n}|\theta_i|/(\sigma x^2)
\, \rho_{n-k}\left(-\alpha_{i,n}\theta_i/(\sigma x)\right) \\
& \times \left[\Phi\big(-(n^{1/2}\theta_i/(\sigma\xi_{i,n}))(1 +
\alpha_{i,n}\xi_{i,n}\eta_{i,n}/x)\big) - 
\Phi\big(-(n^{1/2}\theta_i/(\sigma\xi_{i,n}))
(1 - \alpha_{i,n}\xi_{i,n}\eta_{i,n}/x)\big)\right] \, dx\\
& + \int_0^\infty 
(n^{1/2}s/(\alpha_{i,n}\xi_{i,n})) 
\phi\big(n^{1/2}xs/(\alpha_{i,n}\xi_{i,n})\big)
\ind\left(|xs/\alpha_{i,n} + \theta_i/\sigma| > \xi_{i,n}s\eta_{i,n}\right) 
\rho_{n-k}(s) \, ds dx
\end{split}
\end{align}
in case $\theta_i \neq 0$.
\end{proposition}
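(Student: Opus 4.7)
The plan is to reduce the unknown-variance cdf to a known-variance calculation by conditioning on $\hat\sigma$. Under Assumption~A, $\hat\theta_{LS,i}$ is independent of $\hat\sigma$, and $\hat\sigma/\sigma$ has density $\rho_{n-k}$ on $(0,\infty)$. Conditional on $\hat\sigma=\sigma s$, the feasible estimator $\tilde\theta_{H,i}$ coincides with the infeasible estimator $\hat\theta_{H,i}$ tuned with $s\eta_{i,n}$ in place of $\eta_{i,n}$, so
\[
P\bigl(\hat\sigma^{-1}\alpha_{i,n}(\tilde\theta_{H,i}-\theta_i)\leq x\bigm|\hat\sigma=\sigma s\bigr)
 = P\bigl(\sigma^{-1}\alpha_{i,n}(\hat\theta_{H,i}(s\eta_{i,n})-\theta_i)\leq xs\bigr).
\]
Integrating against $\rho_{n-k}(s)\,ds$ produces (\ref{fscdfH}), once one inserts the known-variance cdf of $\sigma^{-1}\alpha_{i,n}(\hat\theta_{H,i}-\theta_i)$, which is Proposition~19 of \cite{poesch11}. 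A short case analysis in the sign and magnitude of $y+\theta_i/\sigma$ (with $y=x/\alpha_{i,n}$) against $\pm\xi_{i,n}\eta_{i,n}$ shows that the known-variance cdf equals $\Phi(n^{1/2}y/\xi_{i,n})$ outside the gap $[-\xi_{i,n}\eta_{i,n},\xi_{i,n}\eta_{i,n}]$ and equals the constant $\Phi(n^{1/2}(-\theta_i/(\sigma\xi_{i,n})\pm\eta_{i,n}))$ on the two halves of that gap. Substituting $s\eta_{i,n}$ for $\eta_{i,n}$ and $xs$ for the argument yields the three bracketed terms in the integrand of (\ref{fscdfH}).

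To extract (\ref{fspdfH0}) and (\ref{fspdfHnot0}) I would perform the Lebesgue decomposition of (\ref{fscdfH}). When $\theta_i=0$ the cdf is constant on each of $[-\alpha_{i,n}\xi_{i,n}\eta_{i,n},0)$ and $[0,\alpha_{i,n}\xi_{i,n}\eta_{i,n}]$, so it carries an atom at $0$ of mass equal to the jump $\int_0^\infty[\Phi(n^{1/2}s\eta_{i,n})-\Phi(-n^{1/2}s\eta_{i,n})]\rho_{n-k}(s)\,ds$, which is exactly $P(\tilde\theta_{H,i}=0)$. Outside the gap I differentiate under the integral using $\Phi'=\phi$ to obtain the absolutely continuous part of (\ref{fspdfH0}).

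When $\theta_i\neq 0$, the thresholding event $\{\tilde\theta_{H,i}=0\}$ contributes $-\alpha_{i,n}\theta_i/\hat\sigma$ to the statistic, a strictly monotone function of $\hat\sigma$, so no atom survives at $0$. I would handle this piece by the change of variables $s=-\alpha_{i,n}\theta_i/(\sigma x)$ with Jacobian $\alpha_{i,n}|\theta_i|/(\sigma x^2)$; the range constraint $s>0$ becomes the indicator $\ind(-\sign(\theta_i)\,x\geq 0)$, and independence of $\hat\theta_{LS,i}$ and $\hat\sigma$ supplies the conditional probability $\Phi(n^{1/2}(s\eta_{i,n}-\theta_i/(\sigma\xi_{i,n})))-\Phi(n^{1/2}(-s\eta_{i,n}-\theta_i/(\sigma\xi_{i,n})))$ of the thresholding event. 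Substituting $s=-\alpha_{i,n}\theta_i/(\sigma x)$ and factoring $-n^{1/2}\theta_i/(\sigma\xi_{i,n})$ out of the $\Phi$-arguments gives the first line of (\ref{fspdfHnot0}); the non-thresholding contribution is obtained by differentiating under the integral as in the $\theta_i=0$ case, producing the second line.

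The main obstacle is book-keeping: tracking signs and cases so that the change-of-variable contribution from $\{\tilde\theta_{H,i}=0\}$ for $\theta_i\neq 0$ lands on the correct half of the gap $|x|\leq\alpha_{i,n}\xi_{i,n}\eta_{i,n}$ (selected by $\ind(-\sign(\theta_i)\,x\geq 0)$), and verifying that the two $\Phi$-arguments rearrange to $-(n^{1/2}\theta_i/(\sigma\xi_{i,n}))(1\pm\alpha_{i,n}\xi_{i,n}\eta_{i,n}/x)$ as claimed. Beyond that, the essential structure — independence, conditioning, a single application of the known-variance formula, and a one-dimensional change of variables in $s$ — does all the work.
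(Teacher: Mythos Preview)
Your proposal is correct and follows essentially the same route as the paper: condition on $\hat\sigma/\sigma=s$, use independence to reduce to the known-variance cdf $H^i_{H,s\eta_{i,n},n,\theta,\sigma}(xs)$ from Proposition~19 in \cite{poesch11}, and then extract the measure. Your change-of-variables treatment of the thresholding piece for $\theta_i\neq 0$ is exactly what the paper obtains by ``moving the indicator into the limits of integration and applying Leibniz's rule''; these are two phrasings of the same computation. One minor wording issue: the first line of (\ref{fspdfHnot0}) is supported on the whole half-line $\{-\sign(\theta_i)\,x\geq 0\}$, not just on a gap of width $2\alpha_{i,n}\xi_{i,n}\eta_{i,n}$, since $-\alpha_{i,n}\theta_i/\hat\sigma$ ranges over an entire half-line.
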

%%%%%%%%%%%%%%%%%%%%%%%%%%%%%%%%%%%%%%%%%%%%%%%%%%%%%%%%%%%%%%%%%%%%%%%%%%%%
\begin{figure}[htb]
\begin{center}
\includegraphics[width=6cm]{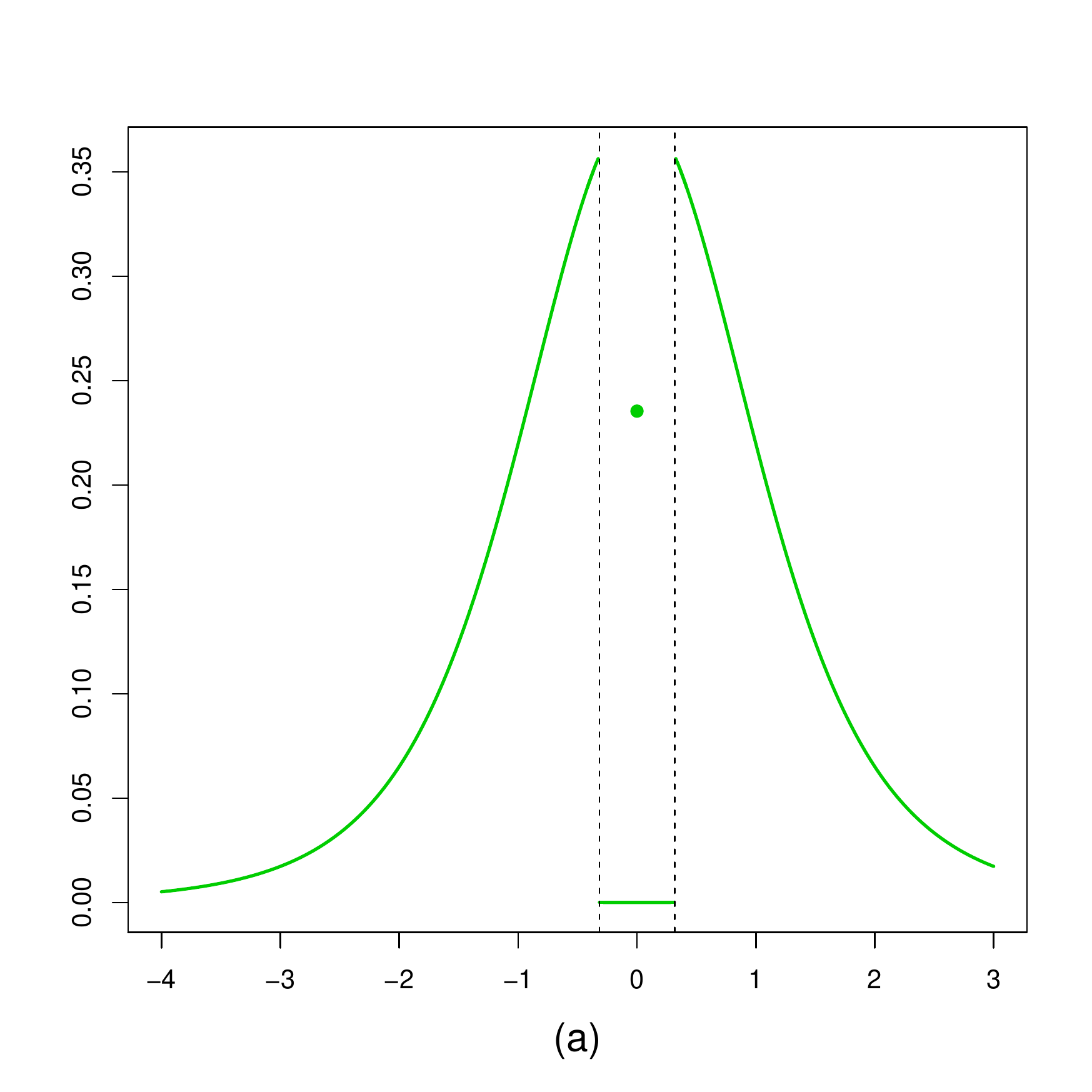}
\includegraphics[width=6cm]{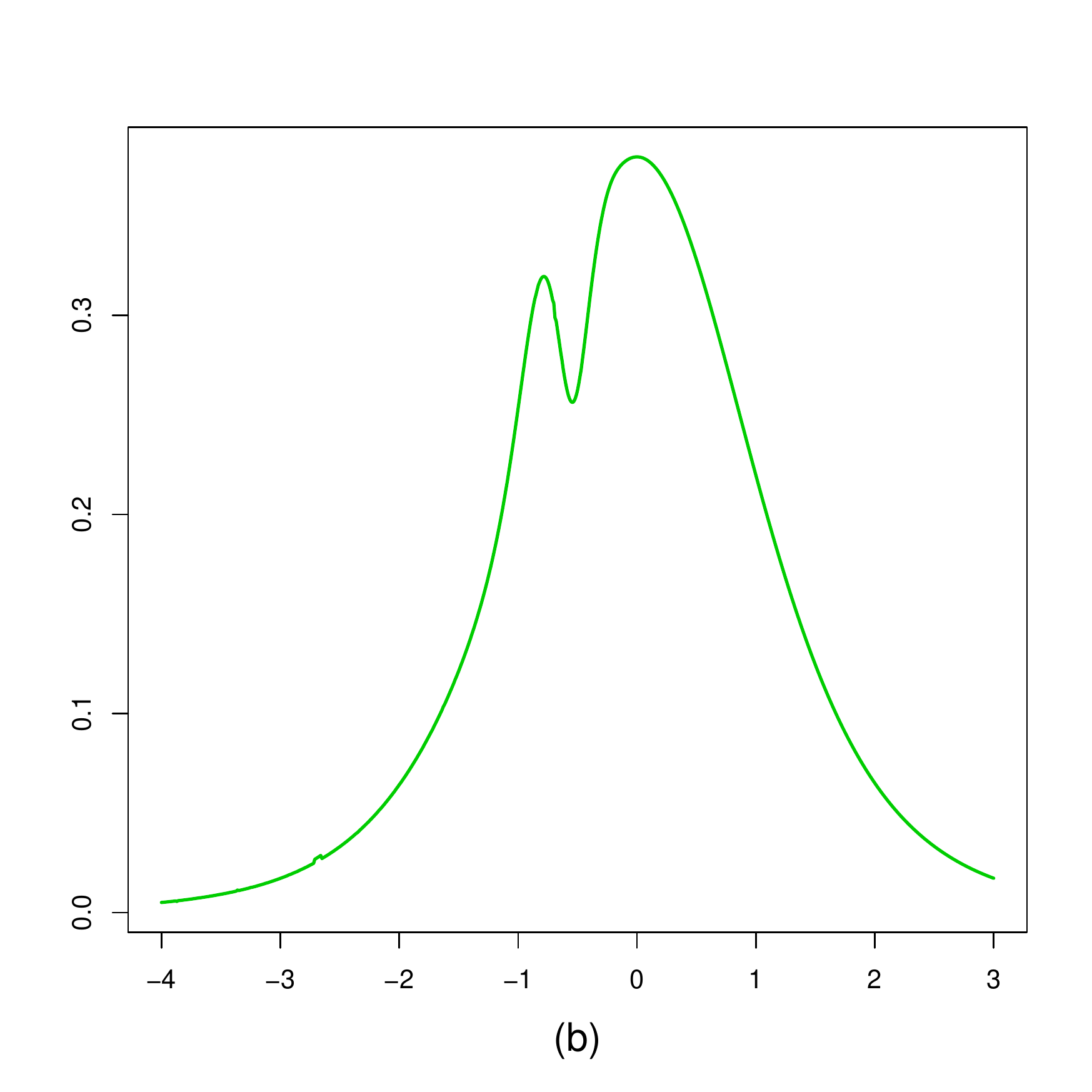}
\caption{\label{plotpdfH} Hard-thresholding: Plots of $d\tilde
  H^i_{H,n,\theta,\sigma}$ for (a) $\theta_i=0$ and (b)
  $\theta_i=0.16$. Exemplarily, in both plots we set $n=40$, $k=35$,
  $\eta_{i,n}=0.05$, $\xi_{i,n}=1$, $\sigma^2=1$,
  $\alpha_{i,n}=n^{1/2}/\xi_{i,n}$. The dot in part (a) corresponds to
  the total mass of the atomic part.}
\end{center}
\end{figure}
%%%%%%%%%%%%%%%%%%%%%%%%%%%%%%%%%%%%%%%%%%%%%%%%%%%%%%%%%%%%%%%%%%%%%%%%%%%%
%%%%%%%%%%%%%%%%%%%%%%%%%%%%%%%%%%%%%%%%%%%%%%%%%%%%%%%%%%%%%%%%%%%%%%%%%%%%
% soft thresholding FS dist
%%%%%%%%%%%%%%%%%%%%%%%%%%%%%%%%%%%%%%%%%%%%%%%%%%%%%%%%%%%%%%%%%%%%%%%%%%%%
%%%%%%%%%%%%%%%%%%%%%%%%%%%%%%%%%%%%%%%%%%%%%%%%%%%%%%%%%%%%%%%%%%%%%%%%%%%%
We now look at the soft-thresholding estimator.

\begin{proposition}[Soft-thresholding in finite samples]
\label{fsdistS}
The cdf $\tilde H^i_{S,n,\theta,\sigma} := \tilde
H^i_{S,\eta_{i,n},n,\theta,\sigma}$ of $\hat\sigma^{-1}\alpha_{i,n}
(\tilde\theta_{S,i} - \theta_i)$ is given by
\begin{align}
\label{fscdfS}
\begin{split}
&\tilde H^i_{S,n,\theta,\sigma}(x) \; = \; \int_0^\infty 
\left[ \Phi\big(n^{1/2}(xs/(\alpha_{i,n}\xi_{i,n})+s\eta_{i,n})\big)
\ind\left(xs/\alpha_{i,n} + \theta_i/\sigma \geq 0\right) \right. \\
& + \; \left.\Phi\big(n^{1/2}(xs/(\alpha_{i,n}\xi_{i,n})-s\eta_{i,n})\big)
\ind\left(xs/\alpha_{i,n} + \theta_i/\sigma < 0\right) \right]
\rho_{n-k}(s) \, ds,
\end{split}
\end{align}
or equivalently, its measure is given by
\begin{align}
\label{fspdfS0}
\begin{split}
d&\tilde H^i_{S,n,\theta,\sigma}(x) = 
\int_0^\infty \left[\Phi\big(n^{1/2}s\eta_{i,n}\big) - 
\Phi\big(-n^{1/2}s\eta_{i,n}\big) \right] \rho_{n-k}(s)\,ds\,d\delta_0(x) \\
& + \int_0^\infty (n^{1/2}s/(\alpha_{i,n}\xi_{i,n})) \left[
\phi\big(n^{1/2}(xs/(\alpha_{i,n}\xi_{i,n})+s\eta_{i,n})\big)
\ind\left(x > 0 \right)\right.\\
&+\left. \phi\big(n^{1/2}(xs/(\alpha_{i,n}\xi_{i,n})-s\eta_{i,n})\big)
\ind\left(x < 0 \right)\right] \rho_{n-k}(s) \, ds dx
\end{split}
\end{align}
for $\theta_i = 0$ and by
\begin{align}
\label{fspdfSnot0}
\begin{split}
d&\tilde H^i_{S,n,\theta,\sigma}(x) = 
\; \ind\{-\sign(\theta_i)\,x \geq 0\} \, \alpha_{i,n}|\theta_i|/(\sigma x^2)
\, \rho_{n-k}\left(-\alpha_{i,n}\theta_i/(\sigma x)\right) \\
& \times \left[\Phi\big(-(n^{1/2}\theta_i/(\sigma\xi_{i,n}))
(1 + \alpha_{i,n}\xi_{i,n}\eta_{i,n}/x)\big) - 
\Phi\big(-(n^{1/2}\theta_i/(\sigma\xi_{i,n}))
(1 - \alpha_{i,n}\xi_{i,n}\eta_{i,n}/x)\big)\right] \, dx\\
& + \int_0^\infty (n^{1/2}s/(\alpha_{i,n}\xi_{i,n})) \left[
\phi\big(n^{1/2}(xs/(\alpha_{i,n}\xi_{i,n})+s\eta_{i,n})\big)
\ind\left(xs/\alpha_{i,n}+\theta_{i}/\sigma > 0 \right)\right.\\
&+\left. \phi\big(n^{1/2}(xs/(\alpha_{i,n}\xi_{i,n})-s\eta_{i,n})\big)
\ind\left(xs/\alpha_{i,n}+\theta_{i}/\sigma < 0 \right)\right]
\rho_{n-k}(s) \, ds dx
\end{split}
\end{align}
in case $\theta_i \neq 0$.
\end{proposition}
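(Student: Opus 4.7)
\medskip
\noindent\emph{Proof plan.}
The plan is to condition on $V:=\hat\sigma/\sigma$, exploiting that in the Gaussian linear model $\hat\theta_{LS,i}$ and $\hat\sigma^{2}$ are independent, with $W:=\hat\theta_{LS,i}/\sigma\sim N(\theta_i/\sigma,\xi_{i,n}^{2}/n)$ and $V$ having density $\rho_{n-k}$ on $(0,\infty)$. Conditioning on $V=s$ freezes the data-dependent threshold at $c:=s\xi_{i,n}\eta_{i,n}$, and the target event $\{\hat\sigma^{-1}\alpha_{i,n}(\tilde\theta_{S,i}-\theta_i)\leq x\}$ rewrites as $\{\tilde\theta_{S,i}/\sigma\leq z\}$ with $z:=\theta_i/\sigma+xs/\alpha_{i,n}$.

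First I would compute the conditional cdf. From the three-branch definition of soft-thresholding, the event $\sign(W)(|W|-c)_{+}\leq z$ reduces to $\{W\leq z+c\}$ when $z\geq 0$ and to $\{W\leq z-c\}$ when $z<0$, as one checks by inspecting the cases $W>c$, $|W|\leq c$, $W<-c$ separately. Using the normality of $W$, these conditional probabilities equal $\Phi\bigl(n^{1/2}(xs/(\alpha_{i,n}\xi_{i,n})+s\eta_{i,n})\bigr)$ and $\Phi\bigl(n^{1/2}(xs/(\alpha_{i,n}\xi_{i,n})-s\eta_{i,n})\bigr)$, respectively. Integrating against $\rho_{n-k}(s)\,ds$ and retaining the sign indicators on $z$ produces precisely (\ref{fscdfS}).

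The measure formulas follow by differentiating (\ref{fscdfS}) in $x$. For $\theta_i=0$, after using $s>0$ the indicators in (\ref{fscdfS}) reduce to $\ind(x\geq 0)$ and $\ind(x<0)$, so the cdf carries a jump at $x=0$ of total size $\int_0^{\infty}[\Phi(n^{1/2}s\eta_{i,n})-\Phi(-n^{1/2}s\eta_{i,n})]\rho_{n-k}(s)\,ds$ coming from the event $\{\tilde\theta_{S,i}=0\}$; this yields the atom $d\delta_0$ in (\ref{fspdfS0}), while away from $x=0$ direct differentiation under the integral gives the $\phi$-density factor $n^{1/2}s/(\alpha_{i,n}\xi_{i,n})$ with the correct sign-indicator, matching the smooth part of (\ref{fspdfS0}).

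The main technical obstacle is the case $\theta_i\neq 0$, where the sign-indicators in (\ref{fscdfS}) jump along the curve $s=s_0(x):=-\alpha_{i,n}\theta_i/(\sigma x)$, which lies in $(0,\infty)$ iff $-\sign(\theta_i)\,x>0$. I would split the $s$-integral at $s_0(x)$ (whenever this is positive) and apply Leibniz's rule; this yields (i) the smooth derivative contributions, which reproduce the second block of (\ref{fspdfSnot0}) with the indicators on $xs/\alpha_{i,n}+\theta_i/\sigma$ inherited directly from the $\sign(z)$ split, and (ii) a boundary term equal to $\rho_{n-k}(s_0(x))\,|ds_0/dx|$ times the jump of the $\Phi$-integrand at $s_0$. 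Using $|ds_0/dx|=\alpha_{i,n}|\theta_i|/(\sigma x^2)$, $xs_0/(\alpha_{i,n}\xi_{i,n})=-\theta_i/(\sigma\xi_{i,n})$, and $s_0\eta_{i,n}=-(\alpha_{i,n}\xi_{i,n}\eta_{i,n}/x)\cdot\theta_i/(\sigma\xi_{i,n})$, this boundary term simplifies algebraically to exactly the first block of (\ref{fspdfSnot0}). As an independent verification that I would carry out in parallel, the same first block can be recognized as the pushforward density of the map $V\mapsto -\alpha_{i,n}\theta_i/(\sigma V)$ restricted to $\{|W|\leq V\xi_{i,n}\eta_{i,n}\}=\{\tilde\theta_{S,i}=0\}$, which both supplies a direct probabilistic interpretation and serves as a check on the sign-tracking in the Leibniz computation.
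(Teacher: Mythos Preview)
Your proposal is correct and follows essentially the same approach as the paper: both condition on $\hat\sigma/\sigma$ using the independence of $\hat\theta_{LS,i}$ and $\hat\sigma$, obtain the integral representation (\ref{fscdfS}), and then derive the measure for $\theta_i\neq 0$ by splitting the $s$-integral at the moving boundary $s_0(x)=-\alpha_{i,n}\theta_i/(\sigma x)$ and applying Leibniz's rule. The only cosmetic difference is that the paper quotes the known-variance cdf and measure from \cite{poesch11} and substitutes $\eta_{i,n}\mapsto s\eta_{i,n}$, $x\mapsto xs$, whereas you rederive that conditional cdf directly from the soft-thresholding map; your additional pushforward interpretation of the boundary term is a nice cross-check not present in the paper.
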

%%%%%%%%%%%%%%%%%%%%%%%%%%%%%%%%%%%%%%%%%%%%%%%%%%%%%%%%%%%%%%%%%%%%%%%%%%%%
\begin{figure}[htb]
\begin{center}
\includegraphics[width=6cm]{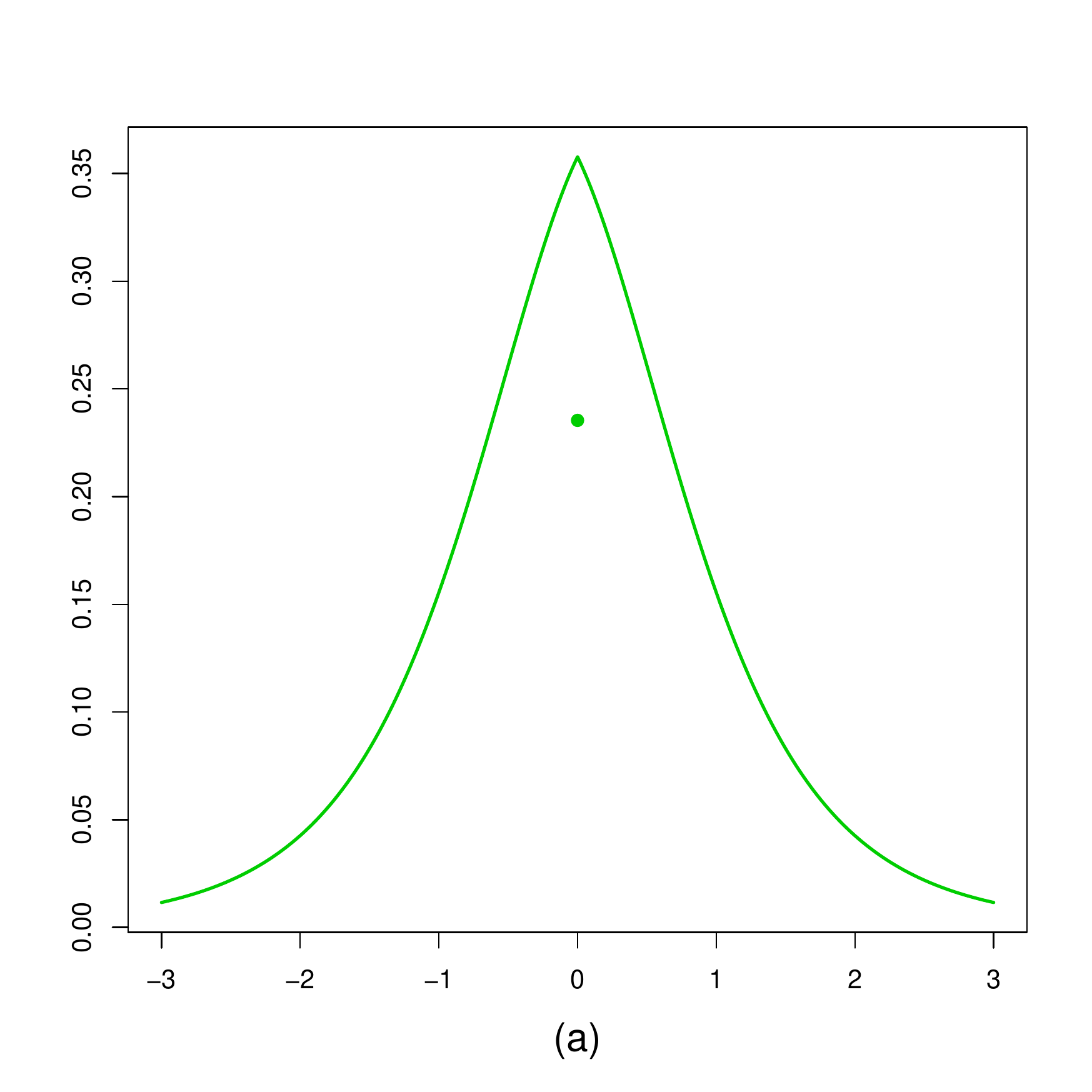}
\includegraphics[width=6cm]{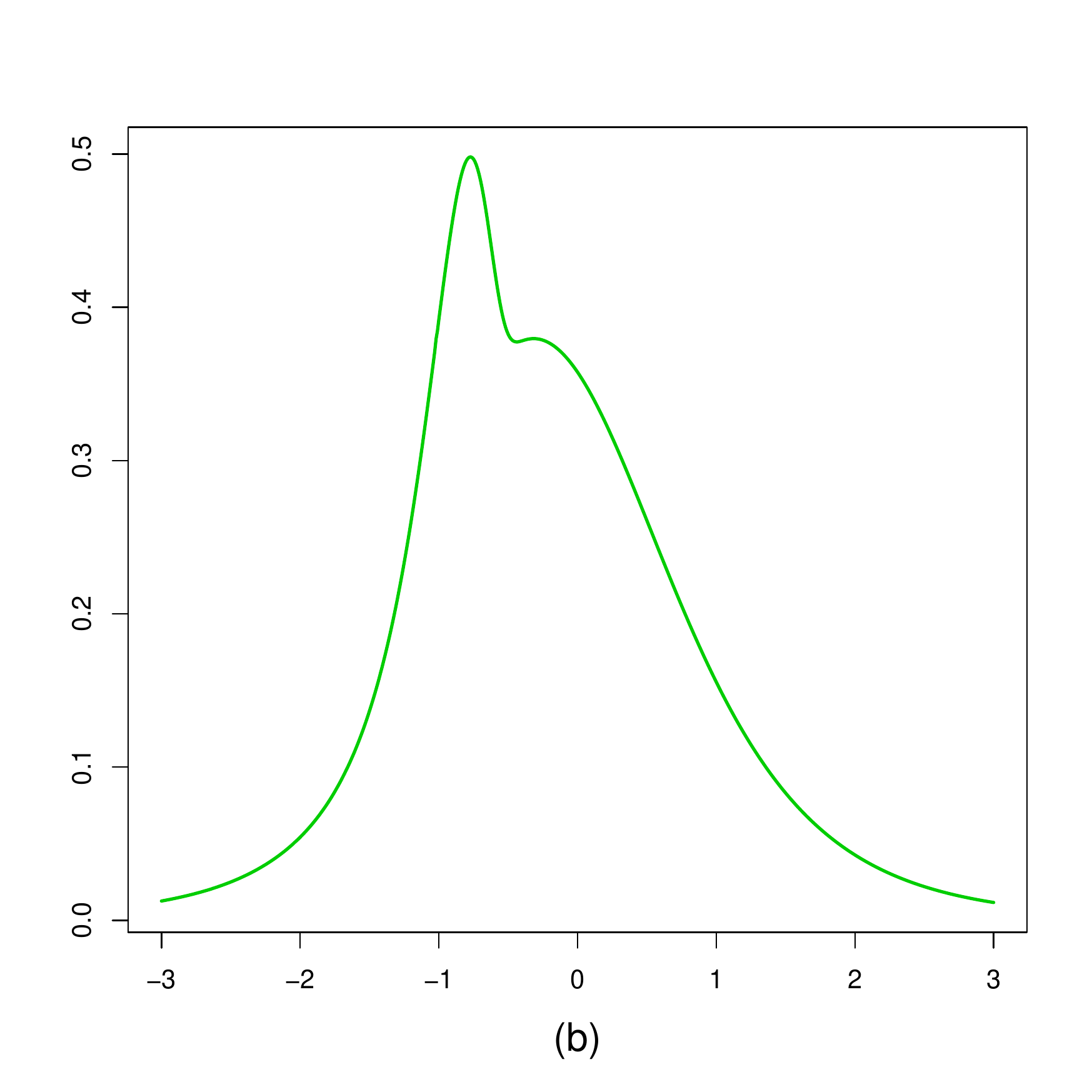}
\caption{\label{plotpdfS} Soft-thresholding: Plots of $d\tilde
  H^i_{S,n,\theta,\sigma}$ for (a) $\theta_i=0$ and (b)
  $\theta_i=0.16$. Exemplarily, in both plots we set to $n=40$,
  $k=35$, $\eta_{i,n}=0.05$, $\xi_{i,n}=1$, $\sigma^2=1$,
  $\alpha_{i,n}=n^{1/2}/\xi_{i,n}$. The dot in part (a) corresponds to
  the total mass of the atomic part.}
\end{center}
\end{figure}
%%%%%%%%%%%%%%%%%%%%%%%%%%%%%%%%%%%%%%%%%%%%%%%%%%%%%%%%%%%%%%%%%%%%%%%%%%%%
%%%%%%%%%%%%%%%%%%%%%%%%%%%%%%%%%%%%%%%%%%%%%%%%%%%%%%%%%%%%%%%%%%%%%%%%%%%%
% adaptive soft thresholding FS dist
%%%%%%%%%%%%%%%%%%%%%%%%%%%%%%%%%%%%%%%%%%%%%%%%%%%%%%%%%%%%%%%%%%%%%%%%%%%%
%%%%%%%%%%%%%%%%%%%%%%%%%%%%%%%%%%%%%%%%%%%%%%%%%%%%%%%%%%%%%%%%%%%%%%%%%%%%
We next consider the adaptive soft-thresholding estimator.
\begin{proposition}[Adaptive soft-thresholding in finite samples]
\label{fsdistAS}
The cdf $\tilde H^i_{AS,n,\theta,\sigma} := \tilde
H^i_{AS,\eta_{i,n},n,\theta,\sigma}$ of $\hat\sigma^{-1}\alpha_{i,n}
(\tilde\theta_{AS,i} - \theta_i)$ is given by
\begin{align}
\label{fscdfAS}
\begin{split}
\tilde H^i_{AS,n,\theta,\sigma}(x) \; = \; & \int_0^\infty 
\left[\Phi\big(z_{n,\theta,\sigma}^{(2)}(xs,s\eta_{i,n})\big) 
\ind\left( xs/\alpha_{i,n} + \theta_{i}/\sigma \geq 0\right)\right. \\
& + \left.\Phi\big(z_{n,\theta,\sigma}^{(1)}(xs,s\eta _{i,n})\big) 
\ind\left(xs/\alpha_{i,n} + \theta_i/\sigma < 0\right)\right]\rho_{n-k}(s)\,ds,
\end{split}
\end{align}
\end{proposition}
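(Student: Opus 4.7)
The plan is to derive the cdf of $\hat\sigma^{-1}\alpha_{i,n}(\tilde\theta_{AS,i}-\theta_i)$ by conditioning on $\hat\sigma$ and reducing to the already-known-variance analogue (Proposition~21 of \cite{poesch11}). Under the Gaussian model assumption, $\hat\theta_{LS}$ and $\hat\sigma^2$ are independent, and $s := \hat\sigma/\sigma$ has density $\rho_{n-k}$ on $(0,\infty)$. Fubini therefore gives
\[
\tilde H^i_{AS,n,\theta,\sigma}(x) \;=\; \int_0^\infty P\bigl(\hat\sigma^{-1}\alpha_{i,n}(\tilde\theta_{AS,i}-\theta_i) \leq x \,\big|\, \hat\sigma = \sigma s\bigr)\,\rho_{n-k}(s)\,ds,
\]
so the task reduces to identifying the conditional cdf.

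The key observation is that $\tilde\theta_{AS,i}$ depends on $\hat\sigma$ only through the combinations $\hat\sigma\xi_{i,n}\eta_{i,n}$ (in the threshold) and $\hat\sigma^2\xi_{i,n}^2\eta_{i,n}^2$ (in the adaptive correction), both of which, on the event $\hat\sigma = \sigma s$, become $\sigma\xi_{i,n}(s\eta_{i,n})$ and $\sigma^2\xi_{i,n}^2(s\eta_{i,n})^2$. Hence, as a function of $\hat\theta_{LS,i}$, $\tilde\theta_{AS,i}$ coincides with the infeasible estimator $\hat\theta_{AS,i}(s\eta_{i,n})$ in which the thresholding parameter has been rescaled by $s$. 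Writing $\hat\sigma^{-1} = (\sigma s)^{-1}$ on this event and using independence of $\hat\theta_{LS,i}$ and $\hat\sigma$ to drop the conditioning on $\hat\theta_{LS,i}$'s distribution, one obtains
\[
P\bigl(\hat\sigma^{-1}\alpha_{i,n}(\tilde\theta_{AS,i}-\theta_i) \leq x \,\big|\, \hat\sigma = \sigma s\bigr) \;=\; \hat H^i_{AS,s\eta_{i,n},n,\theta,\sigma}(sx),
\]
where $\hat H^i_{AS,\eta,n,\theta,\sigma}$ is the known-variance cdf. The factor $s$ in the argument $sx$ is exactly what absorbs the discrepancy between the random scaling $\hat\sigma^{-1}$ and the non-random scaling $\sigma^{-1}$ used in the known-variance result.

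By Proposition~21 of \cite{poesch11}, $\hat H^i_{AS,\eta,n,\theta,\sigma}(x)$ has the form $\Phi\bigl(z^{(2)}_{n,\theta,\sigma}(x,\eta)\bigr)\ind\{x/\alpha_{i,n}+\theta_i/\sigma\geq 0\} + \Phi\bigl(z^{(1)}_{n,\theta,\sigma}(x,\eta)\bigr)\ind\{x/\alpha_{i,n}+\theta_i/\sigma<0\}$, where $z^{(1)},z^{(2)}$ come from the two branches of the explicit inversion of the map $z\mapsto z-\sigma^2\xi_{i,n}^2\eta^2/z$. Performing the simultaneous substitution $(x,\eta)\mapsto(sx,s\eta_{i,n})$ and integrating against $\rho_{n-k}(s)\,ds$ then yields exactly \eqref{fscdfAS}; note that the indicator $\{(sx)/\alpha_{i,n}+\theta_i/\sigma\gtrless 0\}$ is literally the one stated in the proposition. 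The only step requiring genuine care---and hence the main (albeit mild) obstacle---is verifying that the substitution $(x,\eta)\mapsto(sx,s\eta_{i,n})$ is the \emph{correct} simultaneous rescaling for the known-variance formula: this is why the factor $s$ has to appear both in the first argument and alongside $\eta_{i,n}$ in the second argument of $z^{(j)}_{n,\theta,\sigma}$, and it follows transparently once one recalls that $\hat\theta_{AS,i}(\eta)$ depends on $\eta$ only through $\sigma\xi_{i,n}\eta$ and $\sigma^2\xi_{i,n}^2\eta^2$. Given this, the proof is a clean conditioning argument with no nontrivial analytic content beyond what is already packaged into $z^{(1)}_{n,\theta,\sigma}$ and $z^{(2)}_{n,\theta,\sigma}$.
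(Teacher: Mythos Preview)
Your proof is correct and follows essentially the same approach as the paper: condition on $\hat\sigma/\sigma=s$, use independence of $\hat\theta_{LS,i}$ and $\hat\sigma$ to reduce to the known-variance cdf with the substitution $(x,\eta_{i,n})\mapsto(xs,s\eta_{i,n})$, and then invoke Proposition~21 of \cite{poesch11}. The paper's write-up is terser but the logical structure is identical.
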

or equivalently, its measure is given by
\begin{align}
\label{fspdfAS0}
\begin{split}
d&\tilde H^i_{AS,n,\theta,\sigma}(x) = 
\int_0^\infty \left[\Phi\big(n^{1/2}s\eta_{i,n}\big) - 
\Phi\big(-n^{1/2}s\eta_{i,n}\big) \right] \rho_{n-k}(s)\,ds\,d\delta_0(x) \\
& + \int_0^\infty 0.5 (n^{1/2}s/(\alpha_{i,n}\xi_{i,n})) \left[ 
\phi\big(z_{n,\theta,\sigma}^{(2)}(xs,s\eta_{i,n})\big)
(1 + t_{n,\theta,\sigma}(xs,s\eta_{i,n}))\ind\left(x > 0\right) \right. \\
& + \left. \phi\big(z_{n,\theta,\sigma}^{(1)}(xs,s\eta_{i,n})\big)
(1 - t_{n,\theta,\sigma}(xs,s\eta_{i,n}))\ind\left(x < 0\right) \right] 
\rho_{n-k}(ds) \, dx
\end{split}
\end{align}
for $\theta_i=0$ and by
\begin{align}
\label{fspdfASnot0}
\begin{split}
d&\tilde H^i_{AS,n,\theta,\sigma}(x) = 
\; \ind\{-\sign(\theta_i)\,x \geq 0\} \, \alpha_{i,n}|\theta_i|/(\sigma x^2)
\, \rho_{n-k}\left(-\alpha_{i,n}\theta_i/(\sigma x)\right) \\
& \times \left[\Phi\big(-(n^{1/2}\theta_i/(\sigma\xi_{i,n}))
(1 + \alpha_{i,n}\xi_{i,n}\eta_{i,n}/x)\big) - 
\Phi\big(-(n^{1/2}\theta_i/(\sigma\xi_{i,n}))
(1 - \alpha_{i,n}\xi_{i,n}\eta_{i,n}/x)\big)\right] \, dx\\
& + \int_0^\infty 0.5 (n^{1/2}s/(\alpha_{i,n}\xi_{i,n})) \left[ 
\phi\big(z_{n,\theta,\sigma}^{(2)}(xs,s\eta_{i,n})\big)
(1 + t_{n,\theta,\sigma}(xs,s\eta_{i,n})) 
\ind\left(xs/\alpha_{i,n} + \theta_i/\sigma > 0\right) \right. \\
& + \left. \phi\big(z_{n,\theta,\sigma}^{(1)}(xs,s\eta_{i,n})\big)
(1 - t_{n,\theta,\sigma}(xs,s\eta_{i,n}))
\ind\left(xs/\alpha_{i,n} + \theta_i/\sigma < 0\right)\right] \rho_{n-k}(ds)\,dx
\end{split}
\end{align}
for $\theta_i \neq 0$, where $z_{n,\theta,\sigma}^{(1)}(u,v) \leq
z_{n,\theta,\sigma}^{(2)}(u,v)$ are defined by
$$
0.5n^{1/2} (u/\alpha_{i,n} - \theta_{i}/\sigma)/\xi_{i,n}  \pm n^{1/2}
\sqrt{\left(0.5 (u/\alpha_{i,n} + \theta_{i}/\sigma)/\xi_{i,n} \right)^2 + v^2},
$$
and $t_{n,\theta,\sigma}(u,v) = 0.5 \xi_{i,n}^{-1} 
\left(u/\alpha_{i,n} + \theta_i/\sigma \right)/
\left((0.5\xi_{i,n}^{-1}\big(u/\alpha_{i,n} + \theta_i/\sigma)^2 + 
v^2\right)^{1/2}$.
%%%%%%%%%%%%%%%%%%%%%%%%%%%%%%%%%%%%%%%%%%%%%%%%%%%%%%%%%%%%%%%%%%%%%%%%%%%%
\begin{figure}[htb]
\begin{center}
\includegraphics[width=6cm]{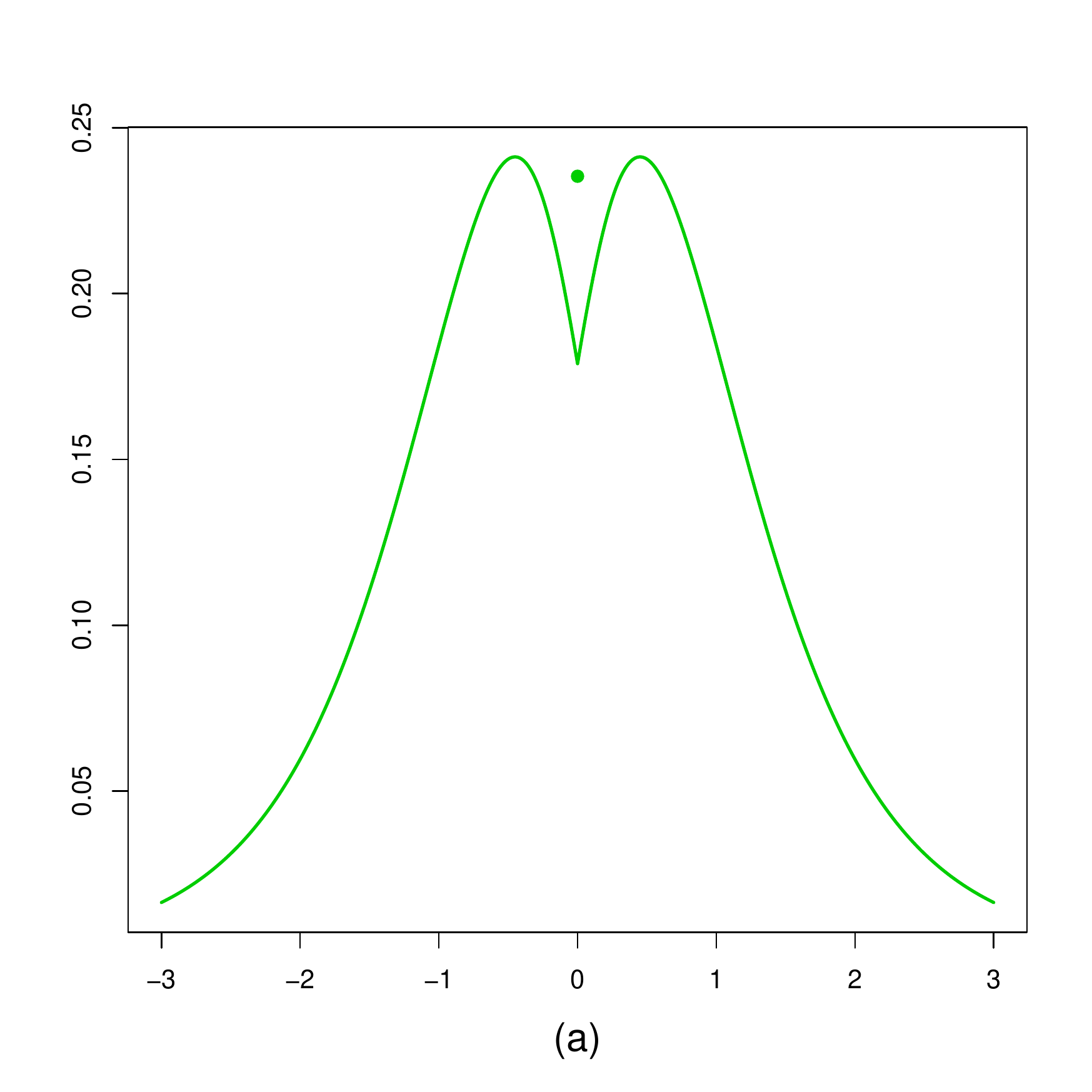}
\includegraphics[width=6cm]{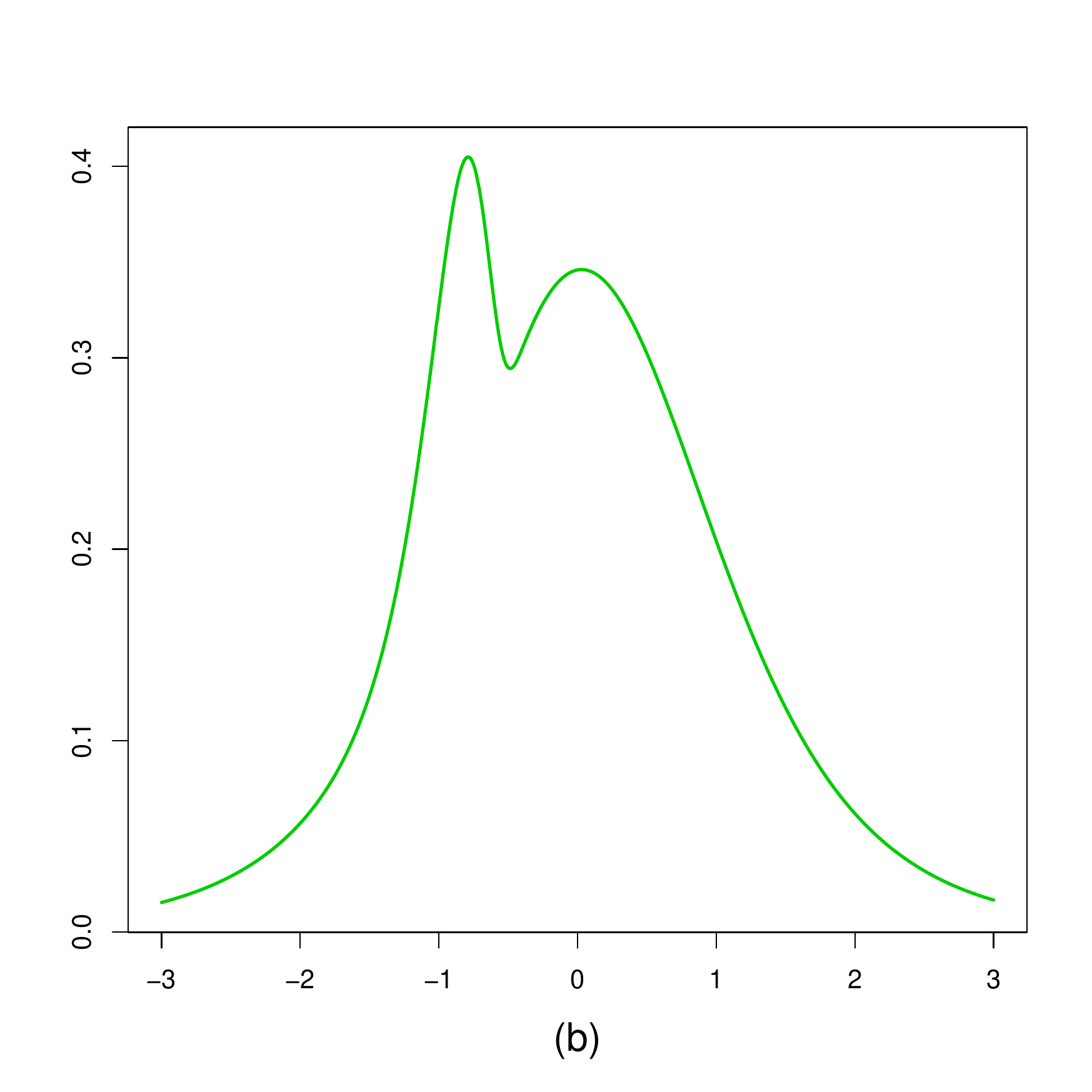}
\caption{\label{plotpdfAS} Adaptive Soft-thresholding: Plots of
  $d\tilde H^i_{AS,n,\theta,\sigma}$ for (a) $\theta_i=0$ and (b)
  $\theta_i=0.16$. Exemplarily, in both plots we set to $n=40$,
  $k=35$, $\eta_{i,n}=0.05$, $\xi_{i,n}=1$, $\sigma^2=1$,
  $\alpha_{i,n}=n^{1/2}/\xi_{i,n}$. The dot in part (a) corresponds to
  the total mass of the atomic part.}
\end{center}
\end{figure}
%%%%%%%%%%%%%%%%%%%%%%%%%%%%%%%%%%%%%%%%%%%%%%%%%%%%%%%%%%%%%%%%%%%%%%%%%%%%

\bigskip

\noindent
Propositions~\ref{fsdistH}-\ref{fsdistAS} show the following about the
finite-sample distributions of $\hat\sigma^{-1}\alpha_{i,n}(\tilde
\theta_i - \theta_i)$. The distributions of all three estimators are
non-normal and depend on the unknown parameter vector $\theta$ only
through the $i$-th component $\theta_i$. In case $\theta_i = 0$, they
are made up of two components, one being pointmass at 0 and the other
one being absolutely continuous (with respect to Lebesgue-measure)
with a density that is generally bimodal. The pointmass part has the
same weight for all three estimators and is represented by the first
term in (\ref{fspdfH0}), (\ref{fspdfS0}), and (\ref{fspdfAS0}) ending
with $d\delta_0(x)$. The remaining second term, ending with $dx$,
represents the absolutely continuous part. In case $\theta_i \neq 0$,
the distributions are always absolutely continuous, as can be seen in
in (\ref{fspdfHnot0}), (\ref{fspdfSnot0}), and (\ref{fspdfASnot0}).

Propositions~23-25 in \cite{poesch11} show that the distributions of
$\sigma^{-1}\alpha_{i,n}$ $\times(\tilde \theta_i - \theta_i)$ with
non-random scaling consist of pointmass and an absolutely continuous
part for all values of $\theta_i$. This means that scaling the
estimators by $\hat\sigma^{-1}$ instead of $\sigma^{-1}$ results in
smoothing the distribution function to the extent that when $\theta_i
\neq 0$, the previously existing pointmass is spread onto $\R_-$ or
$\R_+$ in case $\theta_i > 0$ or $\theta_i < 0$, respectively,
yielding a continuous distribution function for non-zero values of
$\theta_i$. In case $\theta_i=0$, the atomic part of the distributions
of $\sigma^{-1}\alpha_{i,n}(\tilde \theta_i - \theta_i)$ remains the
same as in the case of scaling by $\hat\sigma^{-1}$.

Curiously, on the other hand, scaling by $\hat\sigma^{-1}$ cancels out
some smoothing effect of the unknown-variance hard-thresholding
estimator $\hat\theta_{H,i}$ in case $\theta_i=0$: there, the density
of the absolutely continuous part of the distribution is only
piecewise continuous (has an excised part) both in the case of known
variance and unknown variance with random $\hat\sigma^{-1}$-scaling,
that is, for the distributions of
$\sigma^{-1}\alpha_{i,n}(\hat\theta_{H,i} -\theta_i)$ and
$\hat\sigma^{-1}\alpha_{i,n}(\tilde\theta_{H,i} -\theta_i)$, but not
so for the case of unknown variance with non-random
$\sigma^{-1}$-scaling, that is, for
$\sigma^{-1}\alpha_{i,n}(\tilde\theta_{H,i} -\theta_i)$, cf.\ Figure~1
in \cite{poelee09} and the paragraph above Remark~26 in
\cite{poesch11}.

%%%%%%%%%%%%%%%%%%%%%%%%%%%%%%%%%%%%%%%%%%%%%%%%%%%%%%%%%%%%%%%%%%%%%%%%%%%%
%%%%%%%%%%%%%%%%%%%%%%%%%%%%%%%%%%%%%%%%%%%%%%%%%%%%%%%%%%%%%%%%%%%%%%%%%%%%
%%%%%%%%%%%%%%%%%%%%%%%%%%%%%%%%%%%%%%%%%%%%%%%%%%%%%%%%%%%%%%%%%%%%%%%%%%%%
%%%%%%%%%%%%%%%%%%%%%%%%%%%%%%%%%%%%%%%%%%%%%%%%%%%%%%%%%%%%%%%%%%%%%%%%%%%%
\section{Auxiliary Results: Large-Sample Distributions}
\label{lsdist}
%%%%%%%%%%%%%%%%%%%%%%%%%%%%%%%%%%%%%%%%%%%%%%%%%%%%%%%%%%%%%%%%%%%%%%%%%%%%
%%%%%%%%%%%%%%%%%%%%%%%%%%%%%%%%%%%%%%%%%%%%%%%%%%%%%%%%%%%%%%%%%%%%%%%%%%%%
%%%%%%%%%%%%%%%%%%%%%%%%%%%%%%%%%%%%%%%%%%%%%%%%%%%%%%%%%%%%%%%%%%%%%%%%%%%%
%%%%%%%%%%%%%%%%%%%%%%%%%%%%%%%%%%%%%%%%%%%%%%%%%%%%%%%%%%%%%%%%%%%%%%%%%%%%

We next derive the asymptotic distributions of
$\hat\sigma^{-1}\alpha_{i,n}(\tilde\theta_i - \theta_i)$ under a
so-called moving-parameter framework where the unknown parameter
$\theta$ is allowed to vary with sample size. It is well known that
asymptotics based on a fixed-parameter framework only do not yield a
representative picture of the finite-sample behavior and performance
of the estimator, as is mentioned in the introduction in
Section~\ref{intro}. In particular, we will need the asymptotic
distributions of $\hat\sigma^{-1}\alpha_{i,n}(\tilde\theta_i -
\theta_i)$ under a \emph{moving-parameter} framework in order to carry
out a \emph{uniform} analysis of the coverage properties of confidence
intervals based on $\tilde\theta_i$ in Section~\ref{confunknown}.

The sequence of scaling factors $\alpha_{i,n}$ is chosen according to
the uniform convergence rate of the estimators, that is, $\alpha_{i,n}
= n^{1/2}/\xi_{i,n}$ in the case of conservative tuning, and
$\alpha_{i,n} = (\xi_{i,n}\eta_{i,n})^{-1}$ in case of consistent
tuning, see Theorem~16 in \cite{poesch11}.

Since the number of parameters $k$ may vary with sample size $n$,
asymptotically, we distinguish the case where the number of degrees of
freedom $n-k$ in the variance estimation tends to infinity, and the
case where $n-k$ converges to a finite limit, implying that it
eventually remains constant. For the case where $n-k \to \infty$, note
that $\hat \sigma/\sigma$ converges to 1 in probability, so that the
asymptotic distributions of
$\hat\sigma^{-1}\alpha_{i,n}(\tilde\theta_i - \theta_i)$ coincide with
the asymptotic distributions of
$\sigma^{-1}\alpha_{i,n}(\tilde\theta_i - \theta_i)$ which have been
derived in \cite{poesch11}. 

Note that technically, for the proofs of Section~\ref{confs}, we make
explicit use of the large-sample distributions in the consistently
tuned case only. We state the distributions of the conservatively
tuned case for completeness, but only list the case when $n-k$
converges to a finite limit. For the consistently tuned estimators, we
also quote the case when $n-k \to \infty$ since the results are
directly relevant for Section~\ref{confs}.

%There is no essential loss in generality in considering
%these two cases only, 

%since by compactness of $\N \cup \{\infty\}$ we
%can always assume (possibly after passing to subsequences) that $n-k$
%converges in $\N \cup \{\infty\}$.

%We consider the large-sample distributions of
%$\hat\sigma^{-1}\alpha_i,n}(\tilde\theta_i - \theta_i)$.

%%%%%%%%%%%%%%%%%%%%%%%%%%%%%%%%%%%%%%%%%%%%%%%%%%%%%%%%%%%%%%%%%%%%%%%%%%%%
%%%%%%%%%%%%%%%%%%%%%%%%%%%%%%%%%%%%%%%%%%%%%%%%%%%%%%%%%%%%%%%%%%%%%%%%%%%%
\subsection{Conservative Tuning}
%%%%%%%%%%%%%%%%%%%%%%%%%%%%%%%%%%%%%%%%%%%%%%%%%%%%%%%%%%%%%%%%%%%%%%%%%%%%
%%%%%%%%%%%%%%%%%%%%%%%%%%%%%%%%%%%%%%%%%%%%%%%%%%%%%%%%%%%%%%%%%%%%%%%%%%%%

We first consider the case where the estimators are tuned to perform
conservative variable selection implying that the sequence of tuning
parameters satisfies $n^{1/2}\eta_{i,n} \to e_i$ with $0 \leq e_i <
\infty$. We start by looking at the hard-thresholding estimator and
derive the asymptotic distribution of
$\hat\sigma^{-1}(n^{1/2}/\xi_{i,n})(\tilde\theta_{H,i} - \theta_i)$.

%%%%%%%%%%%%%%%%%%%%%%%%%%%%%%%%%%%%%%%%%%%%%%%%%%%%%%%%%%%%%%%%%%%%%%%%%%%%
%%%%%%%%%%%%%%%%%%%%%%%%%%%%%%%%%%%%%%%%%%%%%%%%%%%%%%%%%%%%%%%%%%%%%%%%%%%%
% hard thresholding LS dist conservative case
%%%%%%%%%%%%%%%%%%%%%%%%%%%%%%%%%%%%%%%%%%%%%%%%%%%%%%%%%%%%%%%%%%%%%%%%%%%%
%%%%%%%%%%%%%%%%%%%%%%%%%%%%%%%%%%%%%%%%%%%%%%%%%%%%%%%%%%%%%%%%%%%%%%%%%%%%

\begin{proposition}[Hard-thresholding with conservative tuning]
\label{lsdistHconserv}
Suppose that for a given $i \geq 1$ satisfying $i \leq k = k(n)$ for
large enough $n$ we have $\xi_{i,n}\eta_{i,n} \to 0$ and
$n^{1/2}\eta_{i,n} \to e_i$ where $0 \leq e_i < \infty$. Set the
scaling factor $\alpha_{i,n} = n^{1/2}/\xi_{i,n}$. Suppose that the
true parameters $\theta^{(n)} = (\theta_{1,n},\dots,\theta_{k_{n},n})
\in \R^{k_{n}}$ and $\sigma_n \in (0,\infty )$ satisfy
$n^{1/2}\theta_{i,n}/(\sigma_n\xi_{i,n}) \to \nu_i \in \Rquer$ and
that $n-k \to m$ in $\N$. Then $\tilde
H^i_{H,n,\theta^{(n)},\sigma_n}$ converges weakly to the distribution
with cdf
\begin{align}
\label{lscdfHnspm}
\begin{split}
\int_0^\infty & \big[ \Phi\left(xs\right) \ind\left(|xs+\nu_i| > se_i\right)
 + \Phi\left(-\nu_i + se_i\right) \ind\left(0 \leq xs+\nu_i \leq se_i\right) \\ 
& + \Phi\left(-\nu_i-se_i\right) \ind\left(-se_i \leq xs+\nu_i < 0\right)
\big] \rho_m(s)\,ds.
\end{split}
\end{align}
The corresponding measure is given by 
\begin{align*}
\bigg\{ \ind\{-\sign(\nu_i)&\,x \geq 0\} \, |\nu_i|/x^2 \,
\rho_m\left(-\nu_i/x\right) \big[\Phi\left(-\nu_i(1+e_i/x)\right) -
  \Phi\left(-\nu_i(1-e_i/x)\right)\big] \\ 
& +\left. \int_0^\infty s \phi\left(xs\right)\ind\left(|xs + \nu_i| > se_i 
\right) \rho_m(s) \,ds \right\} dx
\end{align*}
for $0 < |\nu_i| < \infty$ and by
$$
\big[T_m(e_i) - T_m(-e_i)\big] d\delta_0(x) + 
\ind\left(|x| > e_i\right) t_m(x)\,dx
$$
for $\nu_i = 0$.

\end{proposition}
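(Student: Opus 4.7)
The strategy is to pass to the pointwise limit under the integral in the finite-sample cdf (\ref{fscdfH}) of Proposition~\ref{fsdistH}, with the scaling $\alpha_{i,n} = n^{1/2}/\xi_{i,n}$ substituted in so that the ratio $n^{1/2}/(\alpha_{i,n}\xi_{i,n})$ collapses to one. Under this substitution, the first $\Phi$-argument reduces to $xs$, the remaining two become $n^{1/2}(-\theta_{i,n}/(\sigma_n\xi_{i,n}) \pm s\eta_{i,n})$, and the three indicators are driven by $xs + n^{1/2}\theta_{i,n}/(\sigma_n\xi_{i,n})$ relative to $\pm n^{1/2}s\eta_{i,n}$. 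Since $n - k \to m \in \N$, eventually $n - k = m$ exactly, so $\rho_{n-k}$ already equals $\rho_m$ for all sufficiently large $n$.

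The assumed convergences $n^{1/2}\theta_{i,n}/(\sigma_n\xi_{i,n}) \to \nu_i$ and $n^{1/2}\eta_{i,n} \to e_i$, coupled with continuity of $\Phi$, then give pointwise convergence in $s$ of the full integrand to the integrand of (\ref{lscdfHnspm}) at every $s>0$ with $xs + \nu_i \notin \{0, \pm s e_i\}$; for fixed $x$ the exceptional $s$-set is at most finite and hence Lebesgue-null. The integrand is bounded by $\rho_m(s)$, so dominated convergence delivers convergence of the cdf to (\ref{lscdfHnspm}). The limit distribution has at most one atom, located at $x = 0$ and occurring precisely when $\nu_i = 0$ (where the middle and third indicator terms, evaluated on either side of zero, jointly produce the mass $T_m(e_i) - T_m(-e_i)$), so cdf convergence at all $x \neq 0$ suffices for weak convergence. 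The boundary cases $\nu_i = \pm\infty$ follow from the same scheme: the first indicator converges to one, the middle ones to zero, and the two $\Phi(-\nu_i \pm s e_i)$-factors vanish, collapsing (\ref{lscdfHnspm}) to $\int_0^\infty \Phi(xs)\rho_m(s)\,ds = T_m(x)$, as expected when thresholding is eventually never triggered.

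The measure expressions then follow either by differentiating the limit cdf or, more directly, by taking the analogous pointwise limits in the finite-sample densities (\ref{fspdfH0}) and (\ref{fspdfHnot0}), using $\alpha_{i,n}\xi_{i,n}\eta_{i,n} = n^{1/2}\eta_{i,n} \to e_i$ and $\alpha_{i,n}\theta_{i,n}/(\sigma_n x) \to \nu_i/x$ together with continuity of $\rho_m$. For $0 < |\nu_i| < \infty$ these limits reproduce the claimed density termwise; for $\nu_i = 0$ the finite-sample atomic weight $\int_0^\infty[\Phi(n^{1/2}s\eta_{i,n}) - \Phi(-n^{1/2}s\eta_{i,n})]\rho_{n-k}(s)\,ds$ tends to $T_m(e_i) - T_m(-e_i)$ (producing the $d\delta_0$-term), and the absolutely continuous part simplifies through the identity $\int_0^\infty s\phi(xs)\rho_m(s)\,ds = t_m(x)$ to $\ind(|x| > e_i)t_m(x)$. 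The main technical hurdle throughout is the bookkeeping around the boundary sets $\{xs + \nu_i = \pm s e_i\}$ where the indicators are discontinuous: one must check that these sets are Lebesgue-null in $s$ for each fixed $x \neq 0$ (so they are invisible to dominated convergence) and that no concentration of mass there produces a spurious atom, leaving the single legitimate atom at the origin in the critical case $\nu_i = 0$.
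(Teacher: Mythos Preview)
Your proof is correct and follows essentially the same approach as the paper: pointwise convergence of the integrand in (\ref{fscdfH}) for Lebesgue-almost all $s$ combined with dominated convergence (using that $n-k=m$ eventually) to obtain (\ref{lscdfHnspm}), and then reading off the measure from the finite-sample formulas (\ref{fspdfH0}) and (\ref{fspdfHnot0}) since the limit cdf has the same functional form with $n^{1/2}\theta_{i,n}/(\sigma_n\xi_{i,n})$ and $n^{1/2}\eta_{i,n}$ replaced by their limits $\nu_i$ and $e_i$. Your treatment is in fact more careful than the paper's in spelling out the null sets for the indicator discontinuities, the weak-convergence argument via continuity points, and the boundary cases $|\nu_i|=\infty$.
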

%%%%%%%%%%%%%%%%%%%%%%%%%%%%%%%%%%%%%%%%%%%%%%%%%%%%%%%%%%%%%%%%%%%%%%%%%%%%
Note that the limiting distribution in the above proposition is a
$t$-distribution with $m$ degrees of freedom in case $|\nu_i|=\infty$
or $e_i=0$. Moreover, if $\nu_i=0$, the expression in
(\ref{lscdfHnspm}) simplifies to
$$
T_m(x) \ind\left(|x| > e_i\right) + T_m\left(e_i\right)
\ind\left(0 \leq x \leq e_i\right) 
+ T_m\left(-e_i\right) \ind\left(-e_i \leq x < 0\right).
$$ 
By the argument given above, the limiting distribution for the case
when $n-k \to \infty$ is the same as in Theorem~33(b) in
\cite{poesch11}.

%%%%%%%%%%%%%%%%%%%%%%%%%%%%%%%%%%%%%%%%%%%%%%%%%%%%%%%%%%%%%%%%%%%%%%%%%%%%
%%%%%%%%%%%%%%%%%%%%%%%%%%%%%%%%%%%%%%%%%%%%%%%%%%%%%%%%%%%%%%%%%%%%%%%%%%%%
% soft thresholding LS dist conservative case
%%%%%%%%%%%%%%%%%%%%%%%%%%%%%%%%%%%%%%%%%%%%%%%%%%%%%%%%%%%%%%%%%%%%%%%%%%%%
%%%%%%%%%%%%%%%%%%%%%%%%%%%%%%%%%%%%%%%%%%%%%%%%%%%%%%%%%%%%%%%%%%%%%%%%%%%%

We now look at the soft-thresholding estimator and derive the
asymptotic distribution of
$\hat\sigma^{-1}(n^{1/2}/\xi_{i,n})(\tilde\theta_{S,i} - \theta_i)$.

\begin{proposition}[Soft-thresholding with conservative tuning]
\label{lsdistSconserv}
Suppose that for a given \linebreak $i \geq 1$ satisfying $i \leq k =
k(n)$ for large enough $n$ we have $\xi_{i,n}\eta_{i,n} \to 0$ and
$n^{1/2}\eta_{i,n} \to e_i$ where $0 \leq e_i < \infty$. Set the
scaling factor $\alpha_{i,n} = n^{1/2}/\xi_{i,n}$. Suppose that the
true parameters $\theta^{(n)} = (\theta_{1,n},\dots,\theta_{k_{n},n})
\in \R^{k_{n}}$ and $\sigma_n \in (0,\infty )$ satisfy
$n^{1/2}\theta_{i,n}/(\sigma_n\xi_{i,n}) \to \nu_i \in \Rquer$ and
that $n-k \to m$ in $\N$. Then $\tilde
H^i_{S,n,\theta^{(n)},\sigma_n}$ converges weakly to the distribution
with cdf
\begin{equation}
\label{lscdfSnspm}
\int_0^\infty \big[ \Phi\left((x+e_i)s\right) \ind\left(xs+\nu_i \geq 0\right)
 + \Phi \left((x-e_i)s\right) \ind\left(xs + \nu_i < 0\right) \big] 
\rho_m(s)\,ds.
\end{equation}
The corresponding measure is given by
\begin{align*}
\bigg\{ & \ind\{-\sign(\nu_i)\,x \geq 0\} \, |\nu_i|/x^2 \,
\rho_m\left(-\nu_i/x\right) \big[\Phi\left(-\nu_i(1+e_i/x)\right) -
  \Phi\left(-\nu_i(1-e_i/x)\right)\big] \\ 
& +\left.
\int_0^\infty s\big[\phi\left((x+e_i)s\right)\ind\left(xs + \nu_i > 0\right) +
\phi\left((x-e_i)s\right)\ind\left(xs + \nu_i < 0\right)\big]
\rho_m(s) \,ds \right\} dx
\end{align*}
for $0 < |\nu_i| < \infty$ and by
\begin{equation*}
\big[T_m(e_i) - T_m(-e_i)\big]
d\delta_0(x) + t_m(x+\sign(x)e_i)\,dx
\end{equation*}
for $\nu_i = 0$.
\end{proposition}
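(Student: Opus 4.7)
The plan is to start from the finite-sample expression (\ref{fscdfS}) of Proposition~\ref{fsdistS} and pass to the limit $n \to \infty$. With the choice $\alpha_{i,n} = n^{1/2}/\xi_{i,n}$, the $\Phi$-arguments simplify to $xs \pm n^{1/2}s\eta_{i,n}$, while the indicator condition $xs/\alpha_{i,n} + \theta_{i,n}/\sigma_n \geq 0$ rewrites, after multiplying through by $n^{1/2}/\xi_{i,n} > 0$, as $xs + \nu_i^{(n)} \geq 0$, where $\nu_i^{(n)} := n^{1/2}\theta_{i,n}/(\sigma_n\xi_{i,n}) \to \nu_i$. Using $n^{1/2}\eta_{i,n} \to e_i$ and the fact that $n-k$ is integer-valued with finite limit $m$ (so that eventually $\rho_{n-k} = \rho_m$), the integrand of (\ref{fscdfS}) converges pointwise, for each fixed $x$ and $s>0$ with $xs + \nu_i \neq 0$, to the integrand in (\ref{lscdfSnspm}). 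Dominated convergence with dominating function $\rho_m$ then yields pointwise convergence of $\tilde H^i_{S,n,\theta^{(n)},\sigma_n}(x)$ to the claimed limit, hence weak convergence. The cases $\nu_i = \pm\infty$ are handled by observing that the indicator is eventually identically $1$ or $0$, so the limit reduces to $T_m(x + e_i)$ or $T_m(x - e_i)$.

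For the measure representation I would differentiate the limit cdf in $x$. When $\nu_i = 0$, the identity $\ind(xs \geq 0) = \ind(x \geq 0)$ for $s > 0$ shows that the limit cdf equals $T_m(x+e_i)$ on $(0,\infty)$ and $T_m(x-e_i)$ on $(-\infty,0)$, producing a jump of size $T_m(e_i) - T_m(-e_i)$ at $x = 0$ that gives the stated atom; on $\R \setminus \{0\}$ the Lebesgue density is $t_m(x + \sign(x)e_i)$ via the identity $t_m(y) = \int_0^\infty s\phi(ys)\rho_m(s)\,ds$. When $0 < |\nu_i| < \infty$, the indicator switches at $s^\ast := -\nu_i/x$, which lies in $(0,\infty)$ exactly when $-\sign(\nu_i)\,x > 0$; in that case I split the $s$-integral at $s^\ast$ and apply the Leibniz rule. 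Using $ds^\ast/dx = \nu_i/x^2$ and the identities $(x \pm e_i)\,s^\ast = -\nu_i(1 \pm e_i/x)$, the boundary contribution simplifies to
$$
\frac{|\nu_i|}{x^2}\,\rho_m(-\nu_i/x)\bigl[\Phi\bigl(-\nu_i(1+e_i/x)\bigr) - \Phi\bigl(-\nu_i(1-e_i/x)\bigr)\bigr],
$$
while differentiation of $\Phi$ inside the two pieces yields the $\phi$-terms with their indicators. The factor $\ind(-\sign(\nu_i)\,x \geq 0)$ in the statement encodes the fact that the boundary term vanishes whenever $s^\ast \notin (0,\infty)$.

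The main obstacle is the sign bookkeeping in the Leibniz step: the sign of $ds^\ast/dx = \nu_i/x^2$ flips with $\sign(\nu_i)$, and this interacts with the flip coming from the interchange of the roles of the two intervals $(0,s^\ast)$ and $(s^\ast,\infty)$ (i.e.\ which half carries $(x+e_i)$ versus $(x-e_i)$ in the $\Phi$-argument). Both sign cases $\nu_i>0,\,x<0$ and $\nu_i<0,\,x>0$ must collapse to the same $|\nu_i|/x^2$ prefactor multiplied by the same oriented difference of $\Phi$-values. A secondary technicality is justifying dominated convergence uniformly across $x$ near a jump of the limit cdf — which only happens at $x=0$ when $\nu_i = 0$ — handled by invoking Portmanteau together with pointwise convergence on $\R \setminus \{0\}$.
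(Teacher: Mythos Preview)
Your argument is correct and matches the paper's for the cdf part: pass to the limit in the finite-sample expression (\ref{fscdfS}) via dominated convergence, using that $n-k=m$ eventually so $\rho_{n-k}=\rho_m$. For the measure, the paper takes a shortcut you do not exploit: since the limit cdf (\ref{lscdfSnspm}) has exactly the same functional form as the finite-sample cdf (\ref{fscdfS}) with $n^{1/2}\theta_{i,n}/(\sigma_n\xi_{i,n})$ and $n^{1/2}\eta_{i,n}$ replaced by their limits $\nu_i$ and $e_i$, the measure formulas (\ref{fspdfS0}) and (\ref{fspdfSnot0}) from Proposition~\ref{fsdistS} can be read off directly after this substitution --- the Leibniz step you carry out was already done there. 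Your direct re-derivation is fine and the sign bookkeeping you flag does collapse correctly to the $|\nu_i|/x^2$ prefactor, but it duplicates work; recognizing the structural identity with the finite-sample formula is the cleaner route.
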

%%%%%%%%%%%%%%%%%%%%%%%%%%%%%%%%%%%%%%%%%%%%%%%%%%%%%%%%%%%%%%%%%%%%%%%%%%%%
Note that the limiting distribution is a $t$-distribution with $m$
degrees of freedom in case $e_i=0$, and a shifted $t$-distribution
with $m$ degrees of freedom in case $|\nu_i| = \infty$ (with shift
$-\sign(\nu_i) e_i$). In case $\nu_i=0$, the expression in
(\ref{lscdfSnspm}) simplifies to
$$
T_m(x+e_i) \ind(x \geq 0) + T_m(x-e_i) \ind(x < 0).
$$ 
By the argument given above, the limiting distribution for the case
$n-k \to \infty$ is the same as in Theorem~34(b) in \cite{poesch11}.

%%%%%%%%%%%%%%%%%%%%%%%%%%%%%%%%%%%%%%%%%%%%%%%%%%%%%%%%%%%%%%%%%%%%%%%%%%%%
%%%%%%%%%%%%%%%%%%%%%%%%%%%%%%%%%%%%%%%%%%%%%%%%%%%%%%%%%%%%%%%%%%%%%%%%%%%%
% adaptive soft thresholding LS dist conservative case
%%%%%%%%%%%%%%%%%%%%%%%%%%%%%%%%%%%%%%%%%%%%%%%%%%%%%%%%%%%%%%%%%%%%%%%%%%%%
%%%%%%%%%%%%%%%%%%%%%%%%%%%%%%%%%%%%%%%%%%%%%%%%%%%%%%%%%%%%%%%%%%%%%%%%%%%%

Finally, we consider the adaptive soft-thresholding estimator and
derive the asymptotic distribution of
$\hat\sigma^{-1}(n^{1/2}/\xi_{i,n})(\tilde\theta_{AS,i} - \theta_i)$.

\begin{proposition}[Adaptive soft-thresholding with conservative tuning]
\label{lsdistASconserv}
$\;$ Suppose that for a given $i \geq 1$ satisfying $i \leq k = k(n)$
for large enough $n$ we have $\xi_{i,n}\eta_{i,n} \to 0$ and
$n^{1/2}\eta_{i,n} \to e_i$ where $0 \leq e_i < \infty$. Set the
scaling factor $\alpha_{i,n} = n^{1/2}/\xi_{i,n}$. Suppose that the
true parameters $\theta^{(n)} = (\theta_{1,n},\dots,\theta_{k_{n},n})
\in \R^{k_{n}}$ and $\sigma_n \in (0,\infty )$ satisfy
$n^{1/2}\theta_{i,n}/(\sigma_n\xi_{i,n}) \to \nu_i \in \Rquer$ and
that $n-k \to m$ in $\N$. Then $\tilde
H^i_{AS,n,\theta^{(n)},\sigma_n}$ converges weakly to the distribution
with cdf
\begin{equation}
\label{lscdfASnspm}
\int_0^\infty \!
\big[\Phi(\bar z^{(2)}_{\nu_i}(xs,se_i))\ind(xs+\nu_i \geq 0) +
 \Phi(\bar z^{(1)}_{\nu_i}(xs,se_i))\ind(xs+\nu_i < 0)\big] 
\rho_m(s)ds,
\end{equation}
where $\bar z_\nu^{(1)}(u,v) \leq \bar z_\nu^{(2)}(u,v)$ are defined as
$$
(u-\nu)/2 \pm \sqrt{((u+\nu)/2)^2 + v^2}.
$$ 
The corresponding measure is given by
\begin{align*}
\bigg\{ & \ind\{-\sign(\nu_i)\,x \geq 0\} \, |\nu_i|/x^2 \,
\rho_m\left(-\nu_i/x\right) \big[\Phi\left(-\nu_i(1+e_i/x)\right) -
\Phi\left(-\nu_i(1-e_i/x)\right)\big] \\ 
& + \int_0^\infty 0.5s \big[
\phi\big(\bar z^{(2)}_{\nu_i}(xs,se_i)\big)
\left(1 + \bar t_{\nu_i}(xs,se_i)\right)\ind\left(xs+\nu_i > 0\right) \\
& + \phi\big(\bar z^{(1)}_{\nu_i}(xs,se_i)\big)
\left(1 - \bar t_{\nu_i}(xs,se_i)\right)\ind\left(xs+\nu_i < 0\right)\big]
\rho_m(s)ds\bigg\}\,dx
\end{align*}
for $0 < |\nu_i| < \infty$, where
$$
\bar t_\nu(u,v) := \frac{(u-\nu)/2}{\sqrt{((u+\nu)/2)^2+v^2}}
$$
and by
\begin{align*}
\big[T_m&(e_i) - T_m(-e_i)\big] d\delta_0(x) \\ 
& + \; 0.5 \big[\ind\left(x > 0\right) 
t_m\big(x/2+\sqrt{x^2/4+e_i^2}\big)
\big(1+\frac{x/2}{\sqrt{x^2/4+e_i^2}}\big) \\
& + \; \ind\left(x < 0\right) t_m\big(x/2-\sqrt{x^2/4+e_i^2}\big)
\big(1-\frac{x/2}{\sqrt{x^2/4+e_i^2}}\big)
\big]\,dx
\end{align*}
for $\nu_i = 0$.
\end{proposition}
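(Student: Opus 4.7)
The plan is to prove weak convergence of $\tilde H^i_{AS,n,\theta^{(n)},\sigma_n}$ to the stated cdf by passing to the limit in the integral representation (\ref{fscdfAS}) of Proposition~\ref{fsdistAS} via dominated convergence, and then to identify the Lebesgue decomposition of the limit. Since $n-k$ is an integer tending to $m\in\N$, eventually $n-k=m$ and $\rho_{n-k}=\rho_m$. With the choice $\alpha_{i,n}=n^{1/2}/\xi_{i,n}$ one has $n^{1/2}xs/(\alpha_{i,n}\xi_{i,n})=xs$, and using $n^{1/2}\theta_{i,n}/(\sigma_n\xi_{i,n})\to\nu_i$ together with $n^{1/2}\eta_{i,n}\to e_i$ a short calculation gives
\begin{align*}
z^{(2)}_{n,\theta^{(n)},\sigma_n}(xs,s\eta_{i,n})
& = \tfrac12\bigl(xs - n^{1/2}\theta_{i,n}/(\sigma_n\xi_{i,n})\bigr) \\
& \qquad + \sqrt{\tfrac14\bigl(xs + n^{1/2}\theta_{i,n}/(\sigma_n\xi_{i,n})\bigr)^2 + (n^{1/2}\eta_{i,n}s)^2} \\
& \longrightarrow \bar z^{(2)}_{\nu_i}(xs,se_i),
\end{align*}
and analogously $z^{(1)}_{n,\theta^{(n)},\sigma_n}(xs,s\eta_{i,n})\to\bar z^{(1)}_{\nu_i}(xs,se_i)$ as well as $t_{n,\theta^{(n)},\sigma_n}(xs,s\eta_{i,n})\to\bar t_{\nu_i}(xs,se_i)$. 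The indicator in (\ref{fscdfAS}) equals $\ind\bigl(xs+n^{1/2}\theta_{i,n}/(\sigma_n\xi_{i,n})\geq 0\bigr)$ and converges to $\ind(xs+\nu_i\geq 0)$ off the Lebesgue-null set $\{s>0:xs+\nu_i=0\}$. Using $\Phi\le 1$ as envelope, dominated convergence then yields (\ref{lscdfASnspm}) at every continuity point of the limit, which suffices for weak convergence.

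For the measure representation I would distinguish three cases. When $0<|\nu_i|<\infty$, I pass to the limit in (\ref{fspdfASnot0}) term by term: the prefactor $\alpha_{i,n}|\theta_{i,n}|/(\sigma_n x^2)=|n^{1/2}\theta_{i,n}/(\sigma_n\xi_{i,n})|/x^2$ tends to $|\nu_i|/x^2$, the argument $-\alpha_{i,n}\theta_{i,n}/(\sigma_n x)$ of $\rho_{n-k}$ tends to $-\nu_i/x$, and $\alpha_{i,n}\xi_{i,n}\eta_{i,n}=n^{1/2}\eta_{i,n}\to e_i$ inside the $\Phi$-difference, which together give the first (atomic-spread) term of the stated measure; the $s$-integral transfers via the limits of $z^{(j)}$ and $t_{n,\theta,\sigma}$ identified above. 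For $\nu_i=0$, the atomic weight at $0$ is obtained by dominated convergence from the first term of (\ref{fspdfAS0}), evaluating to $T_m(e_i)-T_m(-e_i)$ via the representation $T_m(y)=\int_0^\infty\Phi(ys)\rho_m(s)ds$ recalled in Section~\ref{model}; the absolutely continuous part collapses because $\bar z^{(j)}_0(xs,se_i)=s(x/2\pm\sqrt{x^2/4+e_i^2})$ and $\bar t_0(xs,se_i)=(x/2)/\sqrt{x^2/4+e_i^2}$, so the identity $t_m(y)=\int_0^\infty s\phi(ys)\rho_m(s)ds$ (obtained by differentiating the $T_m$-identity) reduces the $s$-integral to the stated $t_m$-expression. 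Finally, for $|\nu_i|=\infty$ the atomic-spread term vanishes because $\rho_m(-\nu_i/x)\to 0$ for each $x\neq 0$ ($\rho_m$ vanishes at $\pm\infty$), and one obtains the shifted-$t_m$ density from the second term alone.

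The main technical obstacle is handling the indicator discontinuities in the integrand of (\ref{fscdfAS}): one must verify that the discontinuity set $\{s>0:xs+\nu_i=0\}$ is empty or a singleton, hence Lebesgue-null on $(0,\infty)$, so that almost-everywhere convergence of the integrand is sufficient for dominated convergence. A secondary delicate point is the case $\nu_i=0$ with $\theta_{i,n}\neq 0$ for all $n$, where one begins from (\ref{fspdfASnot0}) rather than (\ref{fspdfAS0}); there the atomic-spread term has support contracting to $\{0\}$ because $-\alpha_{i,n}\theta_{i,n}/\sigma_n\to 0$, and one must show that this sequence of contracting densities weakly collapses to a point mass at $0$ with the correct weight, which follows from matching total masses (equal to $P(\tilde\theta_{AS,i}=0)$, converging to $T_m(e_i)-T_m(-e_i)$) together with the support-contraction argument.
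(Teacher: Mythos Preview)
Your argument for the cdf limit via dominated convergence on (\ref{fscdfAS}) is exactly the paper's proof: pointwise a.e.\ convergence of the bracketed integrand for each fixed $x$, the bound $\Phi\le 1$, and $n-k=m$ eventually. This part is fine.

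For the measure representation, however, you take a harder road than necessary. You propose to pass to the limit term by term in the finite-sample density formulas (\ref{fspdfAS0}) and (\ref{fspdfASnot0}), which forces you to handle awkward sub-cases such as $\nu_i=0$ with $\theta_{i,n}\neq 0$ (where the atomic-spread density has contracting support and you must argue it collapses to a point mass with the right weight), and the $|\nu_i|=\infty$ case (where, incidentally, the limit is an unshifted $t_m$, not a shifted one). These density-limit arguments can be made rigorous but require extra justification beyond what you have sketched, since pointwise convergence of densities does not by itself identify the Lebesgue decomposition of the limit measure.

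The paper sidesteps all of this by a structural observation: the limiting cdf (\ref{lscdfASnspm}) has \emph{exactly} the same functional form as the finite-sample cdf (\ref{fscdfAS}), with $n^{1/2}\theta_{i,n}/(\sigma_n\xi_{i,n})$ replaced by $\nu_i$ and $n^{1/2}\eta_{i,n}$ replaced by $e_i$. Since Proposition~\ref{fsdistAS} already computed the measure corresponding to that functional form (the derivation being purely algebraic/differential, valid for any values of the parameters), the measure formulas (\ref{fspdfAS0}) and (\ref{fspdfASnot0}) apply directly with the limiting parameter values substituted in. No limit of densities is needed, and the delicate sub-cases you flag simply do not arise.
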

%%%%%%%%%%%%%%%%%%%%%%%%%%%%%%%%%%%%%%%%%%%%%%%%%%%%%%%%%%%%%%%
Note that the limiting distribution in the above proposition is a
$t$-distribution with $m$ degrees of freedom in case $|\nu_i|=\infty$
or $e_i=0$. Moreover, when $\nu_i=0$, the expression in
(\ref{lscdfASnspm}) simplifies to
$$
\ind\left(x \geq 0\right) T_m\big(x/2 + \sqrt{x^2/4 +e_i^2}\big)
+ \ind\left(x < 0\right) T_m\big(x/2 - \sqrt{x^2/4 +e_i^2}\big).
$$ 
By the argument given above, the limiting distribution for the case
$n-k \to \infty$ is the same as in Theorem~35(b) in \cite{poesch11}.

More generally,
Propositions~\ref{lsdistHconserv}-\ref{lsdistASconserv} show the
following: In case that $n-k$ is eventually constant, the large-sample
distribution perfectly captures the behavior of the finite-sample
distribution. In fact, the limiting distribution has the same
functional form as the finite-sample distribution, only with the
quantities $n^{1/2}\theta_{i,n}/\xi_{i,n}$ and $n^{1/2}\eta_{i,n}$
having settled down to their limiting values, $\nu_i$ and $e_i$,
respectively. Similar to the finite-sample case, the large-sample
distribution possesses an atomic part only in case $\nu_i=0$.

In case $n-k \to \infty$, as can be learnt the Theorems 33(b)-35(b)
form \cite{poesch11}, not too surprisingly, the effect of having to
estimate the error variance washes out, and the limiting distribution
actually coincides with the known-variance case,
cf.\ Propositions~27-29 in \cite{poesch11}. Here, the limiting
distributions also possess an atomic part when $\nu_i \neq 0$, more
concretely, the limiting distribution is only absolutely continuous
when $|\nu_i| = \infty$ or $e_i =0$.

%%%%%%%%%%%%%%%%%%%%%%%%%%%%%%%%%%%%%%%%%%%%%%%%%%%%%%%%%%%%%%%%%%%%%%%%%%%%
%%%%%%%%%%%%%%%%%%%%%%%%%%%%%%%%%%%%%%%%%%%%%%%%%%%%%%%%%%%%%%%%%%%%%%%%%%%%
\subsection{Consistent Tuning}
%%%%%%%%%%%%%%%%%%%%%%%%%%%%%%%%%%%%%%%%%%%%%%%%%%%%%%%%%%%%%%%%%%%%%%%%%%%%
%%%%%%%%%%%%%%%%%%%%%%%%%%%%%%%%%%%%%%%%%%%%%%%%%%%%%%%%%%%%%%%%%%%%%%%%%%%%

We now turn to the case where the estimators are tuned to perform
consistent variable selection so that the sequence of tuning
parameters satisfies $n^{1/2}\eta_{i,n} \to \infty$. We start by
looking at the hard-thresholding estimator and derive the asymptotic
distribution of
$(\hat\sigma\xi_{i,n}\eta_{i,n})^{-1}(\tilde\theta_{H,i} - \theta_i)$.

%%%%%%%%%%%%%%%%%%%%%%%%%%%%%%%%%%%%%%%%%%%%%%%%%%%%%%%%%%%%%%%%%%%%%%%%%%%%
%%%%%%%%%%%%%%%%%%%%%%%%%%%%%%%%%%%%%%%%%%%%%%%%%%%%%%%%%%%%%%%%%%%%%%%%%%%%
% hard thresholding LS dist consistent case
%%%%%%%%%%%%%%%%%%%%%%%%%%%%%%%%%%%%%%%%%%%%%%%%%%%%%%%%%%%%%%%%%%%%%%%%%%%%
%%%%%%%%%%%%%%%%%%%%%%%%%%%%%%%%%%%%%%%%%%%%%%%%%%%%%%%%%%%%%%%%%%%%%%%%%%%%

\begin{proposition}[Hard-thresholding with consistent tuning]
\label{lsdistHconsist}
$\;$ Suppose that for a given $i \geq 1$ satisfying $i \leq k = k(n)$
for large enough $n$ we have $\xi_{i,n}\eta_{i,n} \to 0$ and
$n^{1/2}\eta_{i,n} \to \infty$. Set the scaling factor $\alpha_{i,n} =
(\xi_{i,n}\eta_{i,n})^{-1}$. Suppose that the true parameters
$\theta^{(n)} = (\theta_{1,n},\dots,\theta_{k_{n},n}) \in \R^{k_{n}}$
and $\sigma_n \in (0,\infty )$ satisfy
$\theta_{i,n}/(\sigma_n\xi_{i,n}\eta_{i,n}) \to \zeta_i \in \Rquer$.

\begin{enumerate}

\item If $n - k$ eventually converges to a finite limit $m \in \N$, then 

\begin{enumerate}

\item $|\zeta_i| = \infty$ or $\zeta_i = 0$ implies that $\tilde
  H^i_{H,n,\theta^{(n)},\sigma_n}$ converges weakly to $\delta_0$.

\item $0 < \zeta_i < \infty$ implies that $\tilde
  H^i_{H,n,\theta^{(n)},\sigma_n}$ converges weakly to the
  distribution with cdf
$$ 
\ind(-1 \leq x < 0) \Pr(m \zeta_i^2 < \chi_m^2 \leq m \zeta_i^2/x^2) + 
\ind(x \geq 0).
$$

\item $-\infty < \zeta_i < 0$ implies that $\tilde
  H^i_{H,n,\theta^{(n)},\sigma_n}$ converges weakly to the
  distribution with cdf
$$ 
\ind(0 \leq x <1)\big[\Pr(\chi_m^2 \leq m \zeta_i^2) + 
\Pr(\chi_m^2 > m \zeta_i^2/x^2)\big] + \ind(x \geq 1).
$$

\end{enumerate}

\item If $n-k \to \infty$, then

\begin{enumerate}

\item $|\zeta_i| < 1$ implies that $\tilde
  H^i_{H,n,\theta^{(n)},\sigma_n}$ converges weakly to
  $\delta_{-\zeta_i}$.

\item $|\zeta_i| > 1$ implies that $\tilde
  H^i_{H,n,\theta^{(n)},\sigma_n}$ converges weakly to $\delta_0$.

\item $|\zeta_i| = 1$ implies that $\tilde
  H^i_{H,n,\theta^{(n)},\sigma_n}$ converges weakly
$$
w(f_i,r_i,s_i) \delta_{-\zeta_i} + (1 - w(f_i,r_i,s_i)) \delta_0,
$$ 
where $f_{i,n} = n^{1/2}\eta_{i,n}/(n-k)^{1/2} \to f_i \in \Rquer$,
$r_{i,n} = n^{1/2}(\eta_{i,n}-\zeta_i
\theta_{i,n}/(\sigma_n\xi_{i,n})) \to r_i \in \Rquer$ and
$r_{i,n}/f_{i,n} \to s_i \in \Rquer$ and the weight function $w$ is given by
$$
w(f_i,r_i,s_i) = \begin{cases} 
\Phi(r_i) & f_i = 0, \\
\int_{\R} \Phi(2^{-1/2} f_i t + r_i) \phi(t)dt & 0 < f_i < \infty, \\
\Phi(2^{1/2}s_i) & f_i = \infty.
\end{cases}
$$

\end{enumerate}

\end{enumerate}

\end{proposition}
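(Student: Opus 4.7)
The approach is to decompose
\begin{equation*}
\frac{\tilde\theta_{H,i}-\theta_{i,n}}{\hat\sigma\,\xi_{i,n}\eta_{i,n}}
= \frac{\hat\theta_{LS,i}-\theta_{i,n}}{\hat\sigma\,\xi_{i,n}\eta_{i,n}}\,\ind_n
-\frac{\theta_{i,n}}{\hat\sigma\,\xi_{i,n}\eta_{i,n}}\,(1-\ind_n),
\end{equation*}
where $\ind_n := \ind\{|\hat\theta_{LS,i}|>\hat\sigma\xi_{i,n}\eta_{i,n}\}$, and then reduce everything to the joint behaviour of the two building blocks $Z_n := n^{1/2}(\hat\theta_{LS,i}-\theta_{i,n})/(\sigma_n\xi_{i,n})\sim N(0,1)$ and $W_n := \hat\sigma/\sigma_n = (\chi^2_{n-k}/(n-k))^{1/2}$, which are independent by standard Gaussian theory. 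Writing $\zeta_{i,n} := \theta_{i,n}/(\sigma_n\xi_{i,n}\eta_{i,n})\to\zeta_i$, the two non-trivial summands become $Z_n/(W_n\,n^{1/2}\eta_{i,n})$ and $\zeta_{i,n}/W_n$, while $\ind_n = \ind\{|Z_n/(n^{1/2}\eta_{i,n})+\zeta_{i,n}|>W_n\}$. Since $n^{1/2}\eta_{i,n}\to\infty$ and $W_n$ stays away from $0$, the first summand vanishes in probability in every subcase, so only the indicator and the term $\zeta_{i,n}/W_n$ matter for the limit.

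For Part~(1), where $n-k\to m$, one has joint convergence $(Z_n,W_n)\Rightarrow(Z,W)$ with $Z\sim N(0,1)$ independent of $W=(\chi^2_m/m)^{1/2}$. The threshold event satisfies $\ind_n\Rightarrow\ind\{|\zeta_i|>W\}$ by continuous mapping, using the fact that $W$ has no atoms. In subcase (a), either $|\zeta_i|=\infty$ (so $\ind_n\to 1$ and the LS summand vanishes) or $\zeta_i=0$ (so $\zeta_{i,n}/W_n\to 0$), both giving $\delta_0$. In (b), the event $\{W<\zeta_i\}$ contributes an atom at $0$ with mass $P(\chi^2_m<m\zeta_i^2)$, while on $\{W\geq\zeta_i\}$ the limit equals $-\zeta_i/W\in(-1,0)$, whose cdf follows from $P(-\zeta_i/W\leq x,\,W\geq\zeta_i) = P(\zeta_i<W\leq-\zeta_i/x) = P(m\zeta_i^2<\chi^2_m\leq m\zeta_i^2/x^2)$ for $x\in[-1,0)$. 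Case (c) is symmetric.

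For Part~(2), where $n-k\to\infty$, one has $W_n\to 1$ in probability. If $|\zeta_i|<1$ then $\ind_n\to 0$, so $\tilde\theta_{H,i}=0$ with probability tending to $1$ and the limit equals $-\zeta_i$; if $|\zeta_i|>1$ then $\ind_n\to 1$ and the limit equals $0$. The boundary case $|\zeta_i|=1$ (say $\zeta_i=1$; the other sign is symmetric) is delicate: the quantity inside $|\cdot|$ is positive with probability tending to $1$, and multiplying through by $n^{1/2}\eta_{i,n}$, together with the identity $n^{1/2}\mu_n = n^{1/2}\eta_{i,n}-r_{i,n}$ for $\mu_n=\theta_{i,n}/(\sigma_n\xi_{i,n})$, reduces $\{\ind_n=0\}$ to
\begin{equation*}
Z_n - r_{i,n} \leq n^{1/2}\eta_{i,n}(W_n-1) = f_{i,n}\,V_n,
\end{equation*}
where $V_n := (n-k)^{1/2}(W_n-1)\Rightarrow V\sim N(0,1/2)$ independently of $Z_n$. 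Passing to the limit gives $P(\ind_n=0)\to P(Z-f_iV\leq r_i)$, which equals $\Phi(r_i)$ when $f_i=0$; equals $\int\Phi(2^{-1/2}f_it+r_i)\phi(t)\,dt$ when $0<f_i<\infty$ (write $V=T/\sqrt{2}$ and condition on $T$); and, after dividing the inequality by $f_{i,n}$, equals $\Phi(\sqrt{2}\,s_i)$ when $f_i=\infty$. Combined with the fact that $\ind_n=1$ forces the limit to be $0$, this yields the stated two-point distribution.

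The main obstacle is the boundary case $|\zeta_i|=1$ in Part~(2): the threshold is exactly on the verge of binding, so the crude approximation $W_n\approx 1$ no longer determines the limiting indicator probability, and one must use a first-order Gaussian expansion of $W_n$ around $1$ through $V_n$ while keeping careful track of the relative rates of $n^{1/2}\eta_{i,n}$ and $(n-k)^{1/2}$ encoded in $f_{i,n}$ (and of $r_{i,n}/f_{i,n}$ when $f_i=\infty$). Everywhere else the argument reduces to continuous-mapping applied to the joint weak limit of $(Z_n,W_n)$.
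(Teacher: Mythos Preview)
Your argument is correct and follows a different path from the paper. For part~(a), the paper starts from the explicit finite-sample cdf derived in Proposition~\ref{fsdistH}, substitutes $\alpha_{i,n}=(\xi_{i,n}\eta_{i,n})^{-1}$, and passes to the limit inside the $\rho_m$-integral by dominated convergence, then simplifies the resulting integral case by case. For part~(b) the paper merely notes that $\hat\sigma/\sigma_n\to 1$ in probability and invokes Slutzky, outsourcing the actual limit (including the delicate boundary case $|\zeta_i|=1$) to Theorem~36(b) of \cite{poesch11}. Your probabilistic decomposition into the independent pair $(Z_n,W_n)$ followed by continuous mapping is more direct and fully self-contained: it bypasses the cdf formulas of Section~\ref{fsdist} entirely, and your first-order expansion $V_n=(n-k)^{1/2}(W_n-1)\Rightarrow N(0,1/2)$ in the boundary case makes the role of the ratio $f_{i,n}=n^{1/2}\eta_{i,n}/(n-k)^{1/2}$ transparent rather than hidden in an external citation. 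The paper's route, on the other hand, has the advantage that the limiting cdf in part~(a) drops out directly as a $\rho_m$-integral already in the form $\Pr(m\zeta_i^2<\chi_m^2\le m\zeta_i^2/x^2)$, without first having to identify the limiting random variable as $-\zeta_i/W\cdot\ind\{W\ge|\zeta_i|\}$.
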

%%%%%%%%%%%%%%%%%%%%%%%%%%%%%%%%%%%%%%%%%%%%%%%%%%%%%%%%%%%%%%%%%%%%%%%%%%%%
%%%%%%%%%%%%%%%%%%%%%%%%%%%%%%%%%%%%%%%%%%%%%%%%%%%%%%%%%%%%%%%%%%%%%%%%%%%%
% soft thresholding LS dist consistent case
%%%%%%%%%%%%%%%%%%%%%%%%%%%%%%%%%%%%%%%%%%%%%%%%%%%%%%%%%%%%%%%%%%%%%%%%%%%%
%%%%%%%%%%%%%%%%%%%%%%%%%%%%%%%%%%%%%%%%%%%%%%%%%%%%%%%%%%%%%%%%%%%%%%%%%%%%
We now look at the soft-thresholding estimator and derive the
asymptotic distribution of
$(\hat\sigma\xi_{i,n}\eta_{i,n})^{-1}(\tilde\theta_{S,i} - \theta_i)$.

\begin{proposition}[Soft-thresholding with consistent tuning] 
\label{lsdistSconsist}
Suppose that for a given $i \geq 1$ satisfying $i \leq k = k(n)$ for
large enough $n$ we have $\xi_{i,n}\eta_{i,n} \to 0$ and
$n^{1/2}\eta_{i,n} \to \infty$. Set the scaling factor $\alpha_{i,n} =
(\xi_{i,n}\eta_{i,n})^{-1}$. Suppose that the true parameters
$\theta^{(n)} = (\theta_{1,n},\dots,\theta_{k_{n},n}) \in \R^{k_{n}}$
and $\sigma_n \in (0,\infty )$ satisfy
$\theta_{i,n}/(\sigma_n\xi_{i,n}\eta_{i,n}) \to \zeta_i \in \Rquer$.

\begin{enumerate}

\item If $n-k$ eventually converges to a finite limit $m \in \N$, then

\begin{enumerate}

\item $|\zeta_i| = \infty$ implies that $\tilde
  H^i_{S,n,\theta^{(n)},\sigma_n}$ converges weakly to
  $\delta_{-\sign(\zeta_i)}$.

\item $0 < \zeta_i < \infty$ implies that $\tilde
  H^i_{S,n,\theta^{(n)},\sigma_n}$ converges weakly to the
  distribution with cdf
$$
\ind(-1 \leq x < 0) \Pr(\chi_m^2 \leq m \zeta_i^2/x^2) + \ind(x \geq 0).
$$

\item $\zeta_i = 0$ implies that $\tilde
  H^i_{S,n,\theta^{(n)},\sigma_n}$ converges weakly to $\delta_0$.

\item $-\infty < \zeta_i < 0$ implies that $\tilde
  H^i_{S,n,\theta^{(n)},\sigma_n}$ converges weakly to the
  distribution with cdf
$$
\ind(0 \leq x < 1) \Pr(\chi_m^2 > m \zeta_i^2/x^2) + \ind(x \geq 1).
$$

\end{enumerate}

\item If $n-k \to \infty$, then 

\begin{enumerate}

\item $|\zeta_i| \leq 1$ implies that $\tilde
  H^i_{S,n,\theta^{(n)},\sigma_n}$ converges weakly to
  $\delta_{-\zeta_i}$.

\item $|\zeta_i| > 1$ implies that $\tilde
  H^i_{S,n,\theta^{(n)},\sigma_n}$ converges weakly to
  $\delta_{-\sign(\zeta_i)}$.

\end{enumerate}

\end{enumerate}

\end{proposition}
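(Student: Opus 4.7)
The plan is to work directly from the finite-sample cdf (\ref{fscdfS}) in Proposition~\ref{fsdistS}, insert the consistent-tuning scaling $\alpha_{i,n} = (\xi_{i,n}\eta_{i,n})^{-1}$, and pass to the limit by Dominated Convergence. After the substitution, the two $\Phi$-arguments simplify to $n^{1/2}s\eta_{i,n}(x\pm 1)$ and the indicator conditions become $xs+\zeta_{i,n}\gtrless 0$, where $\zeta_{i,n}:=\theta_{i,n}/(\sigma_n\xi_{i,n}\eta_{i,n})\to \zeta_i$. The integrand is bounded by $\rho_{n-k}(s)$, and since $\rho_{n-k}\to \rho_m$ in $L^1$ in Part~1 (and concentrates at $s=1$ in Part~2), DCT applies at every continuity point of the candidate limit cdf; those discontinuities all lie within $\{-1,0,1\}$.

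For Part~1, the following pointwise facts drive everything: for $s>0$ and $x\notin\{-1,1\}$, the assumption $n^{1/2}\eta_{i,n}\to\infty$ gives $\Phi(n^{1/2}s\eta_{i,n}(x+1))\to \ind(x>-1)$ and $\Phi(n^{1/2}s\eta_{i,n}(x-1))\to \ind(x>1)$, while the indicators converge to their natural limits in $\zeta_i$ (stabilising eventually at $1$ or $0$ when $|\zeta_i|=\infty$). Subcase~(a) is then immediate: if $\zeta_i=+\infty$, the first indicator equals $1$ from some $n$ onwards for every fixed $s>0$, yielding limit cdf $\ind(x>-1)$, namely $\delta_{-\sign(\zeta_i)}$; the case $\zeta_i=-\infty$ is symmetric. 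Subcase~(b), $0<\zeta_i<\infty$: for $x\geq 0$ the first indicator is everywhere $1$ and the first $\Phi$-factor tends to $1$, so the limit is $1$; for $-1\leq x<0$ the first indicator reduces to $\ind(s\leq \zeta_i/|x|)$ while the second $\Phi$ vanishes, giving $\Pr(\sqrt{\chi_m^2/m}\leq \zeta_i/|x|)=\Pr(\chi_m^2\leq m\zeta_i^2/x^2)$; for $x<-1$ both $\Phi$-factors vanish and the limit is $0$. Subcase~(d) is symmetric. Subcase~(c), $\zeta_i=0$: for $x<0$ the first indicator is supported on a set of $\rho_m$-mass tending to $0$ (either empty when $\zeta_{i,n}\leq 0$, or the shrinking interval $[0,\zeta_{i,n}/|x|]$ when $\zeta_{i,n}>0$) while the second $\Phi$ tends to $0$ since $x-1<0$; for $x>0$ the first term tends to $1$ and the second to $0$. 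Hence the limit cdf is $\delta_0$.

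For Part~2 the cleanest route is to invoke $\hat\sigma/\sigma\to 1$ in probability (a standard consequence of $(n-k)\hat\sigma^2/\sigma^2\sim\chi_{n-k}^2$ with $n-k\to\infty$) together with Slutsky's theorem: the weak limit of $\hat\sigma^{-1}(\xi_{i,n}\eta_{i,n})^{-1}(\tilde\theta_{S,i}-\theta_i)$ coincides with that of $\sigma_n^{-1}(\xi_{i,n}\eta_{i,n})^{-1}(\tilde\theta_{S,i}-\theta_i)$, which for the consistently tuned soft-thresholding estimator with known variance is given in \cite{poesch11} and equals $\delta_{-\zeta_i}$ for $|\zeta_i|\leq 1$ and $\delta_{-\sign(\zeta_i)}$ for $|\zeta_i|>1$. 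Alternatively, a direct DCT argument mirroring Part~1 works after noting that $\rho_{n-k}$ concentrates at $s=1$, so the pointwise analysis may be carried out with $s$ effectively pinned to $1$.

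The main obstacle is the case-by-case bookkeeping of Part~1: the analytic machinery is elementary (the dominating function $2\rho_{n-k}(s)$ is uniformly integrable), but one must carefully track the sign of $x\pm 1$, the sign of $\zeta_{i,n}$ (which in subcase~(c) need not have constant sign), and the interaction of the integration window cut out by the indicator with the pointwise limits of the $\Phi$-factors, while verifying convergence only at points outside the discontinuity set $\{-1,0,1\}$.
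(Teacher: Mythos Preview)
Your proposal is correct and follows essentially the same route as the paper: substitute $\alpha_{i,n}=(\xi_{i,n}\eta_{i,n})^{-1}$ into the finite-sample cdf~(\ref{fscdfS}), apply dominated convergence (using that $n-k=m$ eventually, so $\rho_{n-k}=\rho_m$ for large $n$) at continuity points of the candidate limit, and then carry out the case analysis on $\zeta_i$; for Part~(b) the paper likewise invokes Slutsky via $\hat\sigma/\sigma_n\to 1$ and defers to Theorem~37(b) in \cite{poesch11}. Your treatment of the case $\zeta_i=0$ is in fact slightly more explicit than the paper's, correctly noting that $\zeta_{i,n}$ need not have constant sign.
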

%%%%%%%%%%%%%%%%%%%%%%%%%%%%%%%%%%%%%%%%%%%%%%%%%%%%%%%%%%%%%%%%%%%%%%%%%%%%
%%%%%%%%%%%%%%%%%%%%%%%%%%%%%%%%%%%%%%%%%%%%%%%%%%%%%%%%%%%%%%%%%%%%%%%%%%%%
% adaptive soft thresholding LS dist consistent case
%%%%%%%%%%%%%%%%%%%%%%%%%%%%%%%%%%%%%%%%%%%%%%%%%%%%%%%%%%%%%%%%%%%%%%%%%%%%
%%%%%%%%%%%%%%%%%%%%%%%%%%%%%%%%%%%%%%%%%%%%%%%%%%%%%%%%%%%%%%%%%%%%%%%%%%%%
Finally, we consider the adaptive soft-thresholding estimator
and derive the asymptotic distribution of
$(\hat\sigma\xi_{i,n}\eta_{i,n})^{-1}(\tilde\theta_{AS,i} - \theta_i)$.

\begin{proposition}[Adaptive soft-thresholding with consistent tuning]
\label{lsdistASconsist}
Suppose that for a given $i \geq 1$ satisfying $i \leq k = k(n)$ for
large enough $n$ we have $\xi_{i,n}\eta_{i,n} \to 0$ and
$n^{1/2}\eta_{i,n} \to \infty$. Set the scaling factor $\alpha_{i,n} =
(\xi_{i,n}\eta_{i,n})^{-1}$. Suppose that the true parameters
$\theta^{(n)} = (\theta_{1,n},\dots,\theta_{k_{n},n}) \in \R^{k_{n}}$
and $\sigma_n \in (0,\infty )$ satisfy
$\theta_{i,n}/(\sigma_n\xi_{i,n}\eta_{i,n}) \to \zeta_i \in \Rquer$.

\begin{enumerate}

\item If $n-k$ eventually converges to a finite limit $m \in \N$, then

\begin{enumerate}

\item $\zeta_i = 0$ or $|\zeta_i| = \infty$ implies that $\tilde
  H^i_{AS,n,\theta^{(n)},\sigma_n}$ converges weakly to $\delta_0$.

\item $0 < \zeta_i < \infty$ implies that $\tilde
  H^i_{AS,n,\theta^{(n)},\sigma_n}$ converges weakly to the
  distribution with cdf
$$
\ind(-1 \leq x < 0) \Pr(m \zeta_i^2x^2 < \chi_m^2 \leq m \zeta_i^2/x^2)
+ \ind(x \geq 0).
$$

\item $-\infty < \zeta_i < 0$ implies that $\tilde
  H^i_{AS,n,\theta^{(n)},\sigma_n}$ converges weakly to the
  distribution with cdf
$$
\ind(0 \leq x < 1) \big[\Pr(\chi_m^2 \leq m \zeta_i^2 x^2) + 
\Pr(\chi_m^2 > m \zeta_i^2/x^2)\big] + \ind(x \geq 1).
$$

\end{enumerate}

\item If $n-k \to \infty$, then

\begin{enumerate}

\item $|\zeta_i| \leq 1$ implies that $\tilde
  H^i_{AS,n,\theta^{(n)},\sigma_n}$ converges weakly to
  $\delta_{-\zeta_i}$.

\item $1 < |\zeta_i| < \infty$ implies that $\tilde
  H^i_{AS,n,\theta^{(n)},\sigma_n}$ converges weakly to
  $\delta_{-1/\zeta_i}$.

\item $|\zeta_i| = \infty$ implies that $\tilde
  H^i_{AS,n,\theta^{(n)},\sigma_n}$ converges weakly to $\delta_0$.

\end{enumerate}

\end{enumerate}

\end{proposition}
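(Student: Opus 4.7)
The plan is to reduce the problem to the continuous mapping theorem by first finding a compact functional representation of the scaled estimator. First I would write $Z_i := n^{1/2}(\hat\theta_{LS,i}-\theta_{i,n})/(\sigma_n \xi_{i,n}) \sim N(0,1)$ and $V_n := \hat\sigma/\sigma_n$, where $(n-k) V_n^2 \sim \chi_{n-k}^2$ independently of $Z_i$. With $U_{i,n} := Z_i/(n^{1/2}\eta_{i,n})$, which tends to $0$ in probability since $n^{1/2}\eta_{i,n} \to \infty$, and $\zeta_{i,n} := \theta_{i,n}/(\sigma_n\xi_{i,n}\eta_{i,n})$, a short algebraic check using Definition~\ref{ests} gives the universal identity
\begin{equation*}
\frac{\tilde\theta_{AS,i}-\theta_i}{\hat\sigma \xi_{i,n}\eta_{i,n}} \;=\; f(A_n) - \frac{\zeta_{i,n}}{V_n}, \qquad A_n := \frac{\zeta_{i,n} + U_{i,n}}{V_n}, \qquad f(a) := \left(a - \frac{1}{a}\right)\ind(|a|>1),
\end{equation*}
valid both on and off the event $\{|\hat\theta_{LS,i}| > \hat\sigma\xi_{i,n}\eta_{i,n}\}$. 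The decisive observation is that $f(\pm 1)=0$, so $f$ extends continuously across the boundary $|a|=1$.

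Next I would pass to the limit. In Case 1, the law of $V_n$ is the fixed $(\chi_m^2/m)^{1/2}=:V$ for all large $n$; in Case 2, $V_n \to 1$ in probability, so I set $V\equiv 1$ there. By independence of $Z_i$ and $V_n$, the pair $(U_{i,n},V_n)$ converges in distribution to $(0,V)$, and for finite $\zeta_i$ the continuous mapping theorem yields $f(A_n) - \zeta_{i,n}/V_n$ converging in distribution to $f(\zeta_i/V) - \zeta_i/V$. It then remains to identify this law. For $\zeta_i=0$ the limit is $0$, giving $\delta_0$. For $0<\zeta_i<\infty$ in Case 1, split on $\{V<\zeta_i\}$ versus $\{V\geq\zeta_i\}$: in the first case $|\zeta_i/V|>1$ and the limit equals $\zeta_i/V - V/\zeta_i - \zeta_i/V = -V/\zeta_i \in (-1,0)$, in the second case $|\zeta_i/V|\leq 1$ and the limit equals $-\zeta_i/V \in [-1,0)$; computing $P(\cdot\leq x)$ for $-1\leq x<0$ on both pieces and substituting $\chi_m^2=mV^2$ gives $P(m\zeta_i^2 x^2 \leq \chi_m^2 \leq m\zeta_i^2/x^2)$, matching the claim, and the case $\zeta_i<0$ is symmetric. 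In Case 2, $V\equiv 1$, so the limit is $f(\zeta_i)-\zeta_i$, equal to $-\zeta_i$ for $|\zeta_i|\leq 1$ (using $f\equiv 0$ there) and to $-1/\zeta_i$ for $1<|\zeta_i|<\infty$.

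The case $|\zeta_i|=\infty$ falls outside the continuous mapping theorem because $A_n$ diverges; here, however, $|A_n|>1$ eventually, so $f(A_n)=A_n - 1/A_n$, and the explicit cancellation
\begin{equation*}
f(A_n) - \frac{\zeta_{i,n}}{V_n} \;=\; \Big(A_n - \frac{\zeta_{i,n}}{V_n}\Big) - \frac{1}{A_n} \;=\; \frac{U_{i,n}}{V_n} - \frac{1}{A_n}
\end{equation*}
tends to $0$ in probability, yielding the $\delta_0$ limit. The main obstacle is the boundary case $|\zeta_i|=1$ in Case 2, where $\{|A_n|>1\}$ has a nontrivial limiting probability and one cannot simply declare the indicator to be $0$ or $1$ in the limit; what rescues the argument is precisely the continuity of $f$ across $|a|=1$, which makes both branches of the scaled estimator collapse to the common limit $-\zeta_i$ and allows a single application of continuous mapping to handle sub-cases (a)--(b) of Case 2 uniformly.
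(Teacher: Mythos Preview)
Your proof is correct and takes a genuinely different route from the paper's. The paper works analytically: it substitutes $\alpha_{i,n}=(\xi_{i,n}\eta_{i,n})^{-1}$ into the explicit cdf formula from Proposition~\ref{fsdistAS}, applies dominated convergence to the integrand (against $\rho_m(s)\,ds$) for part~(a), and then performs a case-by-case simplification of the resulting indicator expressions; for $|\zeta_i|=\infty$ it uses a Taylor expansion of $\sqrt{1+z}$ to identify the limit, and for part~(b) it simply invokes Slutzky together with the known-variance result from \cite{poesch11}. Your argument is probabilistic and structural: you exhibit the compact representation $\hat\sigma^{-1}\alpha_{i,n}(\tilde\theta_{AS,i}-\theta_{i,n})=f(A_n)-\zeta_{i,n}/V_n$ with $f(a)=(a-1/a)\ind(|a|>1)$, observe that $f$ is continuous across $|a|=1$, and let a single application of the continuous mapping theorem handle all finite-$\zeta_i$ cases in both parts~(a) and~(b) simultaneously. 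What your approach buys is brevity and unification --- parts~(a) and~(b) differ only in the limiting law of $V_n$, and the potentially delicate boundary case $|\zeta_i|=1$ in part~(b) becomes automatic because of the continuity of $f$ --- whereas the paper's approach stays within its established cdf machinery and connects directly to the earlier finite-sample formulas. Your treatment of $|\zeta_i|=\infty$ via the explicit cancellation $U_{i,n}/V_n - 1/A_n$ is also cleaner than the paper's Taylor-expansion argument.
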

%%%%%%%%%%%%%%%%%%%%%%%%%%%%%%%%%%%%%%%%%%%%%%%%%%%%%%%%%%%%%%%
Inspecting the results from
Propositions~\ref{lsdistHconsist}-\ref{lsdistASconsist}, we find the
following: The limiting distributions are concentrated on the interval
$[-1,1]$ in all cases.

In the case when $n-k \to \infty$, the limiting distributions always
collapse to pointmass in case of soft- and adaptive soft-thresholding
and to a convex combination of two pointmasses in case of
hard-thresholding. The limits are the same as for the known-variance
case for soft- and adaptive soft-thresholding
\citep[cf.\ Propositions~28(b) and 29(b) in][]{poesch11}. It seems
worth noting that for the hard-thresholding estimator, the limit also
depends on how fast $n-k$ diverges in relation to the tuning parameter
$\eta_{i,n}$.

When the number of degrees of freedom $n-k$ eventually converges to a
constant, in contrast to the limiting behavior when scaling the same
estimators by the unknown variance instead of the corresponding
estimator \citep[cf.\ Theorems~36(a)-38(a) in][]{poesch11}, we find
that the limiting distributions of are always concentrated on the
interval $[-1,1]$. Moreover, we find an absolutely continuous part in
the limit also for hard-thresholding, which was a convex combination
of two pointmasses for non-random scaling.

When $n-k \to \infty$, we observe the same interesting phenomenon as
for the estimators in the known-variance case, namely that the
limiting distributions always collapse to pointmass or a convex
combination of two pointmasses. This shows that the consistently tuned
thresholding estimators exhibit a severe bias-problem in the sense
that the ``bias-component'' is the dominant component and of larger
order than the ``stochastic variability'' of the estimator. Note that
this is not due to a ``wrong choice'' of scaling factor since indeed
the scaling was chosen according to the uniform convergence rate of
the estimators. When $n-k$ converges to a finite value, some
``stochastic variability'' due to the variance estimator can still be
seen in the limit, but only to the extent that it is contained in
either the interval $[-1,0]$ or the interval $[0,1]$.

\begin{remark}
Note that \cite{poesch11} show that the distribution functions of
$\sigma^{-1}\alpha_{i,n}(\hat\theta_i - \theta_i)$ and
$\sigma^{-1}\alpha_{i,n}(\tilde\theta_i - \theta_i)$ are uniformly
close when the tuning parameter satisfies
$n^{1/2}\eta_{i,n}/(n-k)^{1/2} \to 0$ (cf.\ Theorem~30 in that
reference). It seems worth noting that it can be shown that the same
is \emph{not} true anymore when comparing the distributions of
$\sigma^{-1}\alpha_{i,n}(\hat\theta_i - \theta_i)$ and
$\hat\sigma^{-1}\alpha_{i,n}(\tilde\theta_i - \theta_i)$. We abstain
from spelling out details.
\end{remark}

%%%%%%%%%%%%%%%%%%%%%%%%%%%%%%%%%%%%%%%%%%%%%%%%%%%%%%%%%%%%%%%%%%%%%%%%%%%%
%%%%%%%%%%%%%%%%%%%%%%%%%%%%%%%%%%%%%%%%%%%%%%%%%%%%%%%%%%%%%%%%%%%%%%%%%%%%
%%%%%%%%%%%%%%%%%%%%%%%%%%%%%%%%%%%%%%%%%%%%%%%%%%%%%%%%%%%%%%%%%%%%%%%%%%%%
%%%%%%%%%%%%%%%%%%%%%%%%%%%%%%%%%%%%%%%%%%%%%%%%%%%%%%%%%%%%%%%%%%%%%%%%%%%%
\section{Main Results: Confidence Sets}
\label{confs}
%%%%%%%%%%%%%%%%%%%%%%%%%%%%%%%%%%%%%%%%%%%%%%%%%%%%%%%%%%%%%%%%%%%%%%%%%%%%
%%%%%%%%%%%%%%%%%%%%%%%%%%%%%%%%%%%%%%%%%%%%%%%%%%%%%%%%%%%%%%%%%%%%%%%%%%%%
%%%%%%%%%%%%%%%%%%%%%%%%%%%%%%%%%%%%%%%%%%%%%%%%%%%%%%%%%%%%%%%%%%%%%%%%%%%%
%%%%%%%%%%%%%%%%%%%%%%%%%%%%%%%%%%%%%%%%%%%%%%%%%%%%%%%%%%%%%%%%%%%%%%%%%%%%

We now turn to the main topic of the paper: analyzing coverage
properties of confidence sets based on the thresholding estimators
defined in Section~\ref{model}. More concretely, we consider
confidence intervals for a component $\theta_i$ of the unknown
parameter vector $\theta$ based on the introduced thresholding
estimators. For comparison and illustration purposes, note that the
standard $(1-\alpha)$-confidence interval based on the least-squares
estimator in the known-variance case is given by $[\hat\theta_{LS,i} -
  \sigma z_{i,n},\hat\theta_{LS,i} + \sigma z_{i,n}]$ where $z_{i,n}$
satisfies
$$ 
\Phi(n^{1/2}z_{i,n}/\xi_{i,n}) - \Phi(-n^{1/2}z_{i,n}/\xi_{i,n}) = 
1-\alpha,
$$ 
and by $[\hat\theta_{LS,i} - \hat\sigma t_{i,n},\hat\theta_{LS,i}
  + \hat\sigma t_{i,n}]$, where $t_{i,n}$ solves
$$
T_{n-k}(n^{1/2}t_{i,n}/\xi_{i,n}) - T_{n-k}(-n^{1/2}t_{i,n}/\xi_{i,n}) = 
1-\alpha
$$ 
in the unknown-variance case. Note that the coverage probabilities in
the two displays above do not depend on $\theta_i$ and $\sigma$ so
that the coverage is the same for any values of the unknown
parameters. For the thresholding estimators, however, this is clearly
not the case anymore as can be seen from the corresponding expressions
for the finite-sample distributions that depend on $\theta_i$ and
$\sigma$ in a complicated manner. Therefore, in order to come up with
confidence intervals based on the thresholding estimators, we actually
need find lower bounds on the coverage probabilities over all unknown
parameters, that is, we need to minimize (or lower bound) those
probabilities over $\theta_i$ (and $\sigma$). This is done in the
following two sections for the known- and the unknown-variance case.

%%%%%%%%%%%%%%%%%%%%%%%%%%%%%%%%%%%%%%%%%%%%%%%%%%%%%%%%%%%%%%%%%%%%%%%%%%%%
%%%%%%%%%%%%%%%%%%%%%%%%%%%%%%%%%%%%%%%%%%%%%%%%%%%%%%%%%%%%%%%%%%%%%%%%%%%%
\subsection{Known-variance Case}
\label{confknown}
%%%%%%%%%%%%%%%%%%%%%%%%%%%%%%%%%%%%%%%%%%%%%%%%%%%%%%%%%%%%%%%%%%%%%%%%%%%%
%%%%%%%%%%%%%%%%%%%%%%%%%%%%%%%%%%%%%%%%%%%%%%%%%%%%%%%%%%%%%%%%%%%%%%%%%%%%

We consider confidence intervals of the form $[\hat\theta_i - \sigma
  a_{i,n}, \hat\theta_i + \sigma b_{i,n}]$ where $a_{i,n}, b_{i,n}$
are non-negative real numbers. We are interested in the finite-sample
coverage properties of such intervals, more concretely we aim to
investigate for which $a_{i,n}$ and $b_{i,n}$ we have
$$ 
P_{n,\theta,\sigma}(\theta_i \in [\hat\theta_i - \sigma a_{i,n},
  \hat\theta_i + \sigma b_{i,n}]) \geq 1-\alpha \;\; \text{ for all } \theta
\in \R^k
$$ for some prescribed coverage probability $1-\alpha \in (0,1)$. The
finite-sample results for the estimators in the known-variance case
can be summarized in the following theorem.

\begin{theorem}
\label{confknownfs}
For every $n \geq 1$ and every $\alpha$ satisfying $0<\alpha<1$ we have:

\begin{enumerate}

\item The infimal coverage probability $\inf_{\theta \in
  \R^k}P_{n,\theta,\sigma}(\theta_i \in [\hat\theta_{H,i} - \sigma
  a_{i,n},\hat\theta_{H,i} + \sigma b_{i,n}])$ is given by
$$
\begin{array}{cl}
\Phi(n^{1/2}(a_{i,n}/\xi_{i,n}-\eta_{i,n})) - \Phi(-n^{1/2}b_{i,n}/\xi_{i,n}) 
& \text{ if } \; \xi_{i,n}\eta_{i,n} \leq a_{i,n}+b_{i,n} 
\; \text{ and } \; a_{i,n} \leq b_{i,n} \\ 
\Phi(n^{1/2}(b_{i,n}/\xi_{i,n}-\eta_{i,n})) - \Phi(-n^{1/2}a_{i,n}/\xi_{i,n}) 
& \text{ if } \; \xi_{i,n}\eta_{i,n} \leq a_{i,n}+b_{i,n} 
\; \text{ and } \; a_{i,n} > b_{i,n} \\ 
0 & \text{ if } \; \xi_{i,n}\eta_{i,n} > a_{i,n}+b_{i,n}.
\end{array}
$$ Among all intervals $[\hat\theta_{H,i} - \sigma
  a_{i,n},\hat\theta_{H,i} + \sigma b_{i,n}]$ with infimal coverage
probability not less than $1-\alpha$ there is a unique shortest interval
$C_{H,i,n} = [\hat\theta_{H,i} - \sigma a^*_{H,i,n},\hat\theta_{H,i}
  + \sigma b^*_{H,i,n}]$ characterized by $a^*_{H,i,n} = b^*_{H,i,n}$
with $a^*_{H,i,n}$ being the unique solution in $a$ of
$$
\Phi(n^{1/2}(a/\xi_{i,n} - \eta_{i,n})) - \Phi(-n^{1/2}a/\xi_{i,n}) 
= 1-\alpha.
$$ The interval $C_{H,i,n}$ has infimal coverage probability equal to
$1-\alpha$ and $a^*_{H,i,n}$ satisfies $a^*_{H,i,n} > \xi_{i,n}\eta_{i,n}/2$.

\item The infimal coverage probability $\inf_{\theta \in
  \R^k}P_{n,\theta,\sigma}(\theta_i \in [\hat\theta_{S,i} - \sigma
  a_{i,n},\hat\theta_{S,i} + \sigma b_{i,n}])$ is given by
$$
\begin{array}{ll}
\Phi(n^{1/2}(a_{i,n}/\xi_{i,n} - \eta_{i,n})) - \Phi(n^{1/2}(-b_{i,n}/\xi_{i,n}
 - \eta_{i,n})) & \text{ if } \; a_{i,n} \leq b_{i,n} \\ 
\Phi(n^{1/2}(b_{i,n}/\xi_{i,n} - \eta_{i,n})) - \Phi(n^{1/2}(-a_{i,n}/\xi_{i,n}
 - \eta_{i,n})) & \text{ if } \; a_{i,n} > b_{i,n}.
\end{array}
$$
Among all intervals $[\hat\theta_{S,i} - \sigma
  a_{i,n},\hat\theta_{S,i} + \sigma b_{i,n}]$ with infimal coverage
probability not less than $1-\alpha$ there is a unique shortest interval
$C_{S,i,n} = [\hat\theta_S - \sigma a^*_{S,i,n},\hat\theta_S +
  \sigma b^*_{S,i,n}]$ characterized by $a^*_{S,i,n} = b^*_{S,i,n}$
with $a^*_{S,i,n}$ being the unique solution in $a$ of
$$ 
\Phi(n^{1/2}(a/\xi_{i,n} - \eta_{i,n})) - 
\Phi(n^{1/2}(-a/\xi_{i,n} - \eta_{i,n})) = 1-\alpha.
$$ 
The interval $C_{S,i,n}$\ has infimal coverage probability
equal to $1-\alpha$ and $a^*_{S,i,n}$ is positive.

\item The infimal coverage probability $\inf_{\theta \in
  \R^k}P_{n,\theta,\sigma}(\theta_i \in [\hat\theta_{AS,i} - \sigma
  a_{i,n},\hat\theta_{AS,i} + \sigma b_{i,n}])$ is given by
\begin{align*}
\Phi(n^{1/2}(a_{i,n}/\xi_{i,n} - \eta_{i,n})) - 
\Phi(n^{1/2}\xi_{i,n}^{-1}((a_{i,n} - b_{i,n})/2 - & 
\sqrt{((a_{i,n} + b_{i,n}/2)^2 + \xi^2_{i,n}\eta^2_{i,n}})) \\
 & \text{ if } \; a_{i,n} \leq b_{i,n} \\ 
\Phi(n^{1/2}(b_{i,n}/\xi_{i,n} - \eta_{i,n})) - 
\Phi(n^{1/2}\xi_{i,n}^{-1}((b_{i,n} - a_{i,n})/2 - &
\sqrt{((a_{i,n} + b_{i,n}/2)^2 + \xi^2_{i,n}\eta^2_{i,n}})) \\
 & \text{ if } \; a_{i,n} > b_{i,n}.
\end{align*}
Among all intervals $[\hat\theta_{AS,i} - \sigma
  a_{i,n},\hat\theta_{AS,i} + \sigma b_{i,n}]$ with infimal coverage
probability not less than $1-\alpha $ there is a unique shortest
interval $C_{AS,i,n} = [\hat\theta_{AS,i} - \sigma
  a^*_{AS,i,n},\hat\theta_{AS,i} + \sigma b^*_{AS,i,n}]$ characterized
by $a^*_{AS,i,n}=b^*_{AS,i,n}$ with $a^*_{AS,i,n}$ being the unique
solution in $a$ of
$$ 
\Phi(n^{1/2}(a/\xi_{i,n} - \eta_{i,n})) -
\Phi\left(-n^{1/2}\sqrt{(a/\xi_{i,n})^2 + \eta_{i,n}^2}\right) = 1-\alpha.
$$ 
The interval $C_{A,i,n}$ has infimal coverage probability equal to
$1-\alpha$ and $a^*_{AS,i,n}$ is positive.

\end{enumerate}

\end{theorem}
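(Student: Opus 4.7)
The plan is to treat each of the three estimators in turn, first reducing the computation of the infimal coverage probability to a one-dimensional minimization over the nuisance parameter $\mu := \theta_i/\sigma$, and then minimizing the interval length $a_{i,n} + b_{i,n}$ subject to the resulting coverage constraint. The starting point in each case is the known-variance finite-sample cdf of $\sigma^{-1}(\hat\theta_i - \theta_i)$, which is the $s=1$ specialization of the expressions in Propositions~\ref{fsdistH}--\ref{fsdistAS} (and coincides with Propositions~19--21 of \cite{poesch11}). Since
$$
P_{n,\theta,\sigma}\bigl(\theta_i \in [\hat\theta_i - \sigma a_{i,n}, \hat\theta_i + \sigma b_{i,n}]\bigr) \,=\, P\bigl(-b_{i,n} \le \sigma^{-1}(\hat\theta_i - \theta_i) \le a_{i,n}\bigr),
$$
the dependence on $(\theta, \sigma)$ is only through $\mu$. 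The distribution of $\sigma^{-1}(\hat\theta_i - \theta_i)$ decomposes into an atom at $-\mu$ with explicit weight, plus an absolutely continuous part, which for hard-thresholding has an excised region $\{x : |x + \mu| \le \xi_{i,n}\eta_{i,n}\}$ and for soft- and adaptive soft-thresholding is a shifted/reshaped normal density with no excision.

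Next, I minimize the coverage $p(\mu)$ over $\mu \in \R$ by tracking where the atom at $-\mu$ sits relative to $[-b_{i,n}, a_{i,n}]$ and, for hard-thresholding, where the excised region sits relative to it. For hard-thresholding two regimes emerge: if $\xi_{i,n}\eta_{i,n} > a_{i,n} + b_{i,n}$, one can choose $\mu$ so that the excised region contains $[-b_{i,n}, a_{i,n}]$ while $-\mu$ lies outside it, driving the coverage to $0$; if $\xi_{i,n}\eta_{i,n} \le a_{i,n} + b_{i,n}$, the critical configuration is $\mu$ approaching $-a_{i,n}$ from below or $b_{i,n}$ from above, so that the atom has just left the interval while a strip of the excised region remains inside it. Comparing these two boundary limits via the symmetry and strict unimodality of $\phi$ shows that the $a$-side limit is worse when $a_{i,n} \le b_{i,n}$ and the $b$-side limit when $a_{i,n} > b_{i,n}$, yielding the $\Phi$-differences stated in part~(a); checking that $p(\mu)$ on each interior smooth piece exceeds the corresponding boundary value confirms this is the global infimum. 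For soft- and adaptive soft-thresholding the same boundary analysis applies but without the excision consideration, producing the formulas in parts~(b) and~(c); the square-root term in part~(c) comes directly from the $z^{(1)}_{n,\theta,\sigma}, z^{(2)}_{n,\theta,\sigma}$ expressions in Proposition~\ref{fsdistAS} at $s = 1$.

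With the infimal coverage in closed form as a function of $(a_{i,n}, b_{i,n})$, the third step minimizes $a_{i,n} + b_{i,n}$ subject to infimal coverage $\ge 1 - \alpha$. Writing $a_{i,n} = c - d$ and $b_{i,n} = c + d$, I show that for fixed $c$ the infimal coverage is uniquely maximized at $d = 0$: differentiating on the branch $d \ge 0$ and using symmetry and strict unimodality of $\phi$ shows that the right-hand derivative at $d = 0$ is strictly negative, and by the symmetry between the cases $a_{i,n} \lessgtr b_{i,n}$ the left-hand derivative is strictly positive, so $d = 0$ is the unique maximum. Consequently the shortest feasible interval must be symmetric, and $a^*_{i,n} = b^*_{i,n}$ is determined by the equality case of the coverage constraint, producing the stated defining equations. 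Monotonicity of the symmetric infimal coverage in $c$ together with its range (increasing from $0$ at $c = \xi_{i,n}\eta_{i,n}/2$ to $1$ as $c \to \infty$) gives existence and uniqueness of $a^*_{H,i,n}$ as well as the bound $a^*_{H,i,n} > \xi_{i,n}\eta_{i,n}/2$; the analogous monotonicity for soft- and adaptive soft-thresholding gives positivity of $a^*_{S,i,n}$ and $a^*_{AS,i,n}$.

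The main technical obstacle is the second step for hard-thresholding. The coverage $p(\mu)$ is only piecewise smooth: both the atom at $-\mu$ and the excised region of the absolutely continuous density move with $\mu$, producing several breakpoints at which the closed form of $p(\mu)$ changes. One must carefully enumerate the cases and verify that the global minimum lies in the claimed boundary configuration rather than at an interior point of some smooth piece or at an unrelated breakpoint. The analysis for soft- and adaptive soft-thresholding is noticeably simpler because the absolutely continuous part is nowhere excised, and the length-minimization step in all three cases reduces to a clean one-variable symmetry argument once the infimal coverage has been written down.
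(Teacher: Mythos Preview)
Your approach is correct, but the paper takes a much shorter route. Rather than carrying out the piecewise minimization over $\mu$ and the subsequent length-optimization from scratch, the paper simply observes that
\[
\hat\theta_{H,i}/(\sigma\xi_{i,n}) = \bigl(\hat\theta_{LS,i}/(\sigma\xi_{i,n})\bigr)\,\ind\bigl(|\hat\theta_{LS,i}/(\sigma\xi_{i,n})| > \eta_{i,n}\bigr),
\]
with $\hat\theta_{LS,i}/(\sigma\xi_{i,n}) \sim N(\theta_i/(\sigma\xi_{i,n}),1/n)$, so that after the rescaling $a_{i,n} \mapsto a_{i,n}/\xi_{i,n}$, $b_{i,n} \mapsto b_{i,n}/\xi_{i,n}$ the coverage problem is \emph{identical} to the orthogonal known-variance setting already treated in \cite{poesch10}. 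All three parts then follow by citing Propositions~1--3 and Theorem~5 of that reference, with the analogous rescaling for soft- and adaptive soft-thresholding.

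What you have written is essentially a reconstruction of the content of those cited results: the case analysis tracking the atom and (for hard-thresholding) the excised strip, the boundary comparison via unimodality of $\phi$, and the symmetry argument $a=c-d$, $b=c+d$ for the length minimization are exactly the ingredients of the proofs in \cite{poesch10}. So your argument is sound and self-contained, which is a virtue if one does not want to rely on the external reference; the paper's argument buys brevity at the cost of that dependence. One minor wording issue: the $s=1$ specialization of Propositions~\ref{fsdistH}--\ref{fsdistAS} gives the distribution of $\sigma^{-1}\alpha_{i,n}(\hat\theta_i-\theta_i)$ only because the integrand there coincides with the known-variance cdf; it is cleaner (as you parenthetically note) to cite Propositions~19--21 of \cite{poesch11} directly.
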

%%%%%%%%%%%%%%%%%%%%%%%%%%%%%%%%%%%%%%%%%%%%%%%%%%%%%%%%%5
It seems worth noting that the infimal coverage probabilities in
Theorem~\ref{confknownfs} do not depend on the particular value of
$\sigma$. This is due to the scale-equivariance of the estimators
mentioned in the Section~\ref{model}, more concretely we have that for
$C_{i,n} = [\hat\theta_i - \sigma a_{i,n},\hat\theta_i + \sigma
  a_{i,n}]$ and $p^i_n(\theta,\sigma,a_{i,n},\eta_{i,n}) =
P_{n,\theta,\sigma}(\theta_i \in C_{i,n})$ that
$p^i_n(\theta,\sigma,a_{i,n},\eta_{i,n}) =
p^i_n(\theta/\sigma,1,a_{i,n},\eta_{i,n})$, so that for the infimal
coverage probabilities, it suffices to consider the case $\sigma = 1$.

The fact that symmetric intervals are the shortest even though the
distributions of the estimation errors $\hat\theta_i - \theta_i$ are
not symmetric seems to be linked to the fact that the corresponding
distributions under the parameters $\theta_i$ and $-\theta_i$ are
mirror images of each other.

%%%%%%%%%%%%%%%%%%%%%%%%%%%%%%%%%%%%%%%%%%%%%%%%%%%%%%%%%%%%%%%%%%%%%%%%%%%%
\subsubsection{Comparison of lengths}
\label{asympknown}
%%%%%%%%%%%%%%%%%%%%%%%%%%%%%%%%%%%%%%%%%%%%%%%%%%%%%%%%%%%%%%%%%%%%%%%%%%%%

When comparing the lengths of the different confidence intervals, note
that the half-length of the standard $(1-\alpha)$-confidence interval
based on the least-squares estimator is given by $\sigma
z_{i,n} = \sigma(\xi_{i,n}/n^{1/2})\Phi^{-1}(1-\alpha/2)$ and that
the above theorem shows that
$$
z_{i,n} < a^*_{S,i,n} < a^*_{AS,i,n} < a^*_{H,i,n}
$$ 
for each $n \geq 1$ and every $0 < \alpha < 1$. The asymptotic
behavior is as follows. In the regime of conservative variable selection
where $n^{1/2}\eta_{i,n} \to e_i < \infty$, it follows immediately
from Theorem~\ref{confknownfs} that $n^{1/2} a^*_{H,i,n}/\xi_{i,n}$,
$n^{1/2} a^*_{S,i,n}/\xi_{i,n}$, and $n^{1/2} a^*_{AS,i,n}/\xi_{i,n}$
converge to the unique solutions in $a$ of
\begin{align*}
& \Phi(a - e_i) - \Phi(-a) \; = \; 1-\alpha, \\
& \Phi(a - e_i) - \Phi(-a - e_i) \; = \; 1-\alpha, \\
& \Phi(a - e_i) - \Phi(-\sqrt{a^2 + e_i^2}) \; = \; 1-\alpha,
\end{align*}
respectively, so that while $a^*_{H,i,n}$, $a^*_{S,i,n}$, and
$a^*_{AS,i,n}$ are larger than $z_{i,n}$ for each $n \geq 1$, they are
of the same order $\xi_{i,n}/n^{1/2}$. For the case of consistent
variable selection, however, when $n^{1/2}\eta_{i,n} \to \infty$, a
different picture arises. Let $a^*_{i,n}$ stand for any of the
expressions $a^*_{H,i,n}$, $a^*_{S,i,n}$, and
$a^*_{AS,i,n}$. Theorem~\ref{confknownfs} shows that
$$
\Phi(n^{1/2}(a^*_{i,n}/\xi_{i,n} - \eta_{i,n})) \longrightarrow 1-\alpha,
$$ 
since $\Phi(-n^{1/2}a^*_{H,i,n}/\xi_{i,n}) < \Phi(-n^{1/2}\eta_{i,n}/2)
\to 0$, and both $\Phi(n^{1/2}(-a^*_{S,i,n}/\xi_{i,n} - \eta_{i,n}))$
and $\Phi\left(-n^{1/2}((a^*_{AS,i,n}/\xi_{i,n})^2 +
  \eta_{i,n}^2)^{1/2}\right)$ are less than $\Phi(-n^{1/2}\eta_{i,n})$
which converges to 0. This entails that
\begin{equation}
\label{confknownlength}
n^{1/2}a^*_{i,n}/\xi_{i,n} = n^{1/2}\eta_{i,n} + \Phi^{-1}(1-\alpha) + o(1).
\end{equation}
Since $n^{1/2}\eta_{i,n} \to \infty$ and $n^{1/2}z_{i,n}/\xi_{i,n} =
\Phi^{-1}(1-\alpha/2)$, we find the lengths of the confidence
intervals for the thresholding estimators are actually larger than the
standard confidence interval by an order of magnitude in the case of
consistent tuning.

%%%%%%%%%%%%%%%%%%%%%%%%%%%%%%%%%%%%%%%%%%%%%%%%%%%%%%%%%%%%%%%%%%%%%%%%%%%%
\subsubsection{A simple confidence interval}
%%%%%%%%%%%%%%%%%%%%%%%%%%%%%%%%%%%%%%%%%%%%%%%%%%%%%%%%%%%%%%%%%%%%%%%%%%%%

For the case of consistent tuning of the estimators, we find that we
can construct a simple confidence interval for which we can derive the
asymptotic coverage probabilities. Consider intervals of the form
$D_{i,n} = [\hat\theta_i - \sigma d\xi_{i,n}\eta_{i,n},\hat\theta_i +
  \sigma d \xi_{i,n}\eta_{i,n}]$, where $d$ is a non-negative real
number. (Note that $\xi_{i,n}\eta_{i,n}$ corresponds to the uniform
convergence rate in the consistently tuned case.)

\begin{proposition}
\label{confknowncrude} 
Suppose that for given $i \geq 1$ satisfying $i \leq k = k(n)$ for
large enough $n$ we have $\xi_{i,n}\eta_{i,n} \to 0$ and
$n^{1/2}\eta_{i,n} \to \infty$. Then
$$
\lim_{n \to \infty} \inf_{\theta \in \R^k} 
P_{n,\theta,\sigma}(\theta_i \in D_{i,n}) = 1
$$
if $d > 1$, 
$$
\;\;\; \lim_{n \to \infty} \inf_{\theta \in \R^k} 
P_{n,\theta,\sigma}(\theta_i \in D_{i,n}) = 1/2
$$
if $d = 1$ and 
$$
\lim_{n \to \infty} \inf_{\theta \in \R^k} 
P_{n,\theta,\sigma}(\theta_i \in D_{i,n}) = 0
$$
if $d < 1$. 
\end{proposition}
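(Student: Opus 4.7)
Plan:

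The plan is to exploit scale-equivariance, as noted at the end of Section~\ref{confknown}, to reduce the infimum over $\theta\in\R^k$ to an infimum over the scalar $\zeta := \theta_i/(\sigma\xi_{i,n}\eta_{i,n})\in\R$. Set $W := n^{1/2}(\hat\theta_{LS,i}-\theta_i)/(\sigma\xi_{i,n})\sim N(0,1)$ and $\tau_n := n^{1/2}\eta_{i,n}\to\infty$, so that $V := \hat\theta_{LS,i}/(\sigma\xi_{i,n}\eta_{i,n}) = \zeta+W/\tau_n$ and the thresholding event is $\{|V|\leq 1\}$. The coverage event $\{\theta_i\in D_{i,n}\}$ becomes $\{|\hat\theta_i/(\sigma\xi_{i,n}\eta_{i,n})-\zeta|\leq d\}$: on the threshold event one has $\hat\theta_i=0$ and coverage holds iff $|\zeta|\leq d$, while on the non-threshold event coverage reduces to an explicit condition on $W$ (for hard-thresholding, $|W|\leq d\tau_n$; for soft-thresholding, $W\in[(1-d)\tau_n,(1+d)\tau_n]$ on $\{V>1\}$ and the mirror interval on $\{V<-1\}$; for adaptive soft-thresholding, the analogous interval corrected by a $1/V$-term of lower order).

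For $d>1$, I would show $\sup_\zeta P_{n,\theta,\sigma}(\theta_i\notin D_{i,n})\to 0$ along the threshold dichotomy. Non-coverage on the threshold piece needs $|\zeta|>d>1$, in which case $P(|V|\leq 1)\leq \Phi((1-|\zeta|)\tau_n)\leq \Phi((1-d)\tau_n)\to 0$ uniformly in $\zeta$; non-coverage on the non-threshold piece requires $|W|$ of order at least $(d-1)\tau_n\to\infty$ (after absorbing the unit shift or $1/V$-correction), whose standard-normal tail vanishes uniformly in $\zeta$. For $d<1$, I would exhibit a single bad parameter: pick any fixed $\zeta_0$ with $d<|\zeta_0|<1$. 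Then $P(|V|\leq 1)\to 1$ forces $\hat\theta_i=0$ asymptotically, while $|\zeta_0|>d$ means $|\theta_i|>\sigma d\xi_{i,n}\eta_{i,n}$, so coverage fails. Hence $P_{n,\theta_0,\sigma}(\theta_i\in D_{i,n})\to 0$, and by non-negativity $\inf_\theta P_{n,\theta,\sigma}(\theta_i\in D_{i,n})\to 0$.

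The critical case $d=1$ has two halves. For the upper bound on $\limsup_n\inf_\zeta$, consider $\zeta_n=1+c/\tau_n$ with fixed $c>0$: then $|\zeta_n|>1=d$ kills the threshold contribution, while on $\{V>1\}=\{W>-c\}$ the non-threshold coverage region for $W$ collapses to a half-line starting near $0$, giving $P_n(\zeta_n)\to\Phi(c)$ (or $\Phi(c/2)$ for adaptive soft-thresholding, after accounting for the $1/V$-correction); letting $c\downarrow 0$ yields $\limsup\inf\leq 1/2$. For the matching lower bound $\liminf\inf_\zeta\geq 1/2$, I would split on $|\zeta|$: for $|\zeta|\leq 1$ the threshold contribution alone is $\Phi((1-\zeta)\tau_n)-\Phi(-(1+\zeta)\tau_n)\geq 1/2-o(1)$ uniformly (worst case at $\zeta=\pm 1$); for $|\zeta|>1$ the non-threshold contribution is bounded below by $\Phi(\tau_n)-\Phi((1-|\zeta|)\tau_n)\geq 1/2-o(1)$, since $(1-|\zeta|)\tau_n\leq 0$. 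The main obstacle is precisely this $d=1$ boundary regime: one must uniformly control the transition $|\zeta|\to 1$ and carry out the argument separately for each estimator, as soft-thresholding's $\sign(V)$-shift and adaptive soft-thresholding's $1/V$-correction each need a small dedicated computation to confirm that the effective coverage region for $W$ splits into a half-line as $|\zeta|$ crosses $1$.
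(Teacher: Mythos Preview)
Your plan is correct and would go through; the normal-tail estimates you sketch for the threshold/non-threshold dichotomy are sound for all three estimators, and your two-sided treatment of the boundary case $d=1$ (an explicit near-critical sequence $\zeta_n=1+c/\tau_n$ for the upper bound, and a uniform split on $|\zeta|\lessgtr 1$ for the lower bound) does deliver the limit $1/2$. The detail you flag as the ``main obstacle'' --- carrying the $d=1$ argument through the $\sign(V)$-shift for soft-thresholding and the $1/V$-correction for adaptive soft-thresholding --- is routine once one notices that on $\{V>1\}$ with $\zeta>1$ the coverage region for $W$ always contains $[0,\tau_n]$, which already has mass $1/2-o(1)$.

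The paper, however, takes a much shorter route. For $d>1$ and $d<1$ it simply appeals to the moving-parameter weak limits of $(\sigma\xi_{i,n}\eta_{i,n})^{-1}(\hat\theta_i-\theta_i)$ established in \cite{poesch11} (their Propositions~27--29): every accumulation point is concentrated on $[-1,1]$, giving the $d>1$ case immediately, and one can always realise point mass at $\pm 1$, killing coverage for $d<1$. For the critical case $d=1$ the paper avoids your two-sided argument entirely: it just plugs $a_{i,n}=\xi_{i,n}\eta_{i,n}$ into the \emph{exact} finite-sample infimal coverage formulas of Theorem~\ref{confknownfs}, each of which reads $\Phi(0)-\Phi(\text{something}\to-\infty)\to 1/2$. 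So what you do by hand in several paragraphs the paper gets in one line by cashing in on the already-proved Theorem~\ref{confknownfs}. Your approach buys self-containedness --- it needs neither Theorem~\ref{confknownfs} nor the external reference --- at the cost of length; the paper's approach is terse precisely because it reuses machinery developed elsewhere in the paper and in \cite{poesch11}.
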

%%%%%%%%%%%%%%%%%%%%%%%%%%%%%%%%%%%%%%%%%%%%%%%%%%%%%%%%%%%%%%%%%%%%%%%%%%%%
Since Proposition~\ref{confknowncrude} provides rather crude
confidence intervals, it may be of interest to asymptotically compare
the lengths with the exact intervals proposed in
Theorem~\ref{confknownfs}. From (\ref{confknownlength}) we find that
$$
d \xi_{i,n}\eta_{i,n}/a^*_{i,n} =  d + o(1),
$$
where $d>1$ in fact can be chosen arbitrarily close to 1. 

%%%%%%%%%%%%%%%%%%%%%%%%%%%%%%%%%%%%%%%%%%%%%%%%%%%%%%%%%%%%%%%%%%%%%%%%%%%%
%%%%%%%%%%%%%%%%%%%%%%%%%%%%%%%%%%%%%%%%%%%%%%%%%%%%%%%%%%%%%%%%%%%%%%%%%%%%
\subsection{Unknown-Variance Case}
\label{confunknown}
%%%%%%%%%%%%%%%%%%%%%%%%%%%%%%%%%%%%%%%%%%%%%%%%%%%%%%%%%%%%%%%%%%%%%%%%%%%%
%%%%%%%%%%%%%%%%%%%%%%%%%%%%%%%%%%%%%%%%%%%%%%%%%%%%%%%%%%%%%%%%%%%%%%%%%%%%

Computing coverage probabilities becomes more complicated when the
error variance is not known. In finite samples, we derive a closed
form expression for the infimal coverage probability for an interval
based on the soft-thresholding estimator and provide lower bounds
when the interval is based on hard-thresholding or adaptive
soft-thresholding.

We construct intervals with exact asymptotic coverage for all three
estimators when the number of degrees of freedom $n-k$ tends to
infinity fast enough in relation to the tuning parameter (for either
conservative or consistent tuning) based on the fact that in this case
the infimal coverage probabilities in the known- and unknown-variance
case become arbitrarily close asymptotically.

Similarly to Proposition~\ref{confknowncrude} we construct a simple
confidence interval when the estimators are tuned to perform
consistent variable selection, independent of whether and how fast the
number of degrees of freedom tends to infinity.

%Let $\tilde\theta_i$ and $\hat\theta_i$ represent any of the pairs
%$\tilde\theta_{H,i}$ and $\hat\theta_{H,i}$, $\tilde\theta_{S,i}$ and
%$\hat\theta_{S,i}$, or $\tilde\theta_{AS,i}$ and $\hat\theta_{AS,i}$.
Based on our findings in Theorem~\ref{confknownfs} in the
known-variance case, for the unknown-variance case we consider
symmetric intervals of the form $E_{i,n} = [\tilde\theta_i -
  \hat\sigma a_{i,n},\tilde\theta_i + \hat\sigma a_{i,n}]$ for
non-negative $a_{i,n}$. Note that 
\begin{eqnarray}
\label{confknownunknown}
P_{n,\theta,\sigma}(\theta_i \in E_{i,n}) &=& 
\int_0^\infty p^i_n(\theta,\sigma,a_{i,n}s,\eta_{i,n}s) \rho_{n-k}(s)\,ds 
\nonumber \\
&=& \int_0^\infty p^i_n(\theta/\sigma,1,a_{i,n}s,\eta_{i,n}s) \rho_{n-k}(s)\,ds
\end{eqnarray}
as discussed in Section~\ref{confknown}, so that again it suffices to consider
the case $\sigma = 1$ and in fact we have
$$
\inf_{\theta \in \R^k,\sigma \in \R_+} P_{n,\theta,\sigma}(\theta_i
\in E_{i,n}) = 
\inf_{\theta \in \R^k} P_{n,\theta,1}(\theta_i \in E_{i,n}).
$$ 
The lower bounds derived in Proposition~\ref{confunknownHfs} and
\ref{confunknownASfs} below are then based on the fact that by we have
(\ref{confknownunknown})
$$ 
\inf_{\theta \in \R^k} P_{n,\theta,1}(\theta_i \in E_{i,n}) \geq
\int_0^\infty \inf_{\theta \in \R^k}
p^i_n(\theta/\sigma,1,a_{i,n}s,\eta_{i,n}s) \rho_{n-k}(s)\,ds,
$$
where the infimum inside the integral on the right-hand side of the
inequality can by computed using Theorem~\ref{confknownfs}.

%%%%%%%%%%%%%%%%%%%%%%%%%%%%%%%%%%%%%%%%%%%%%%%%%%%%%%%%%%%%%%%%%%%%%%%%%%%%
%%%%%%%%%%%%%%%%%%%%%%%%%%%%%%%%%%%%%%%%%%%%%%%%%%%%%%%%%%%%%%%%%%%%%%%%%%%%

\subsubsection{Hard-thresholding}

We start by considering the hard-thresholding estimator. We look at
intervals of the form $E_{H,i,n} = [\tilde\theta_{H,i} - \hat\sigma
  a_{i,n},\tilde\theta_{H,i} + \hat\sigma a_{i,n}]$, where $a_{i,n}$
is a non-negative real number. Let $C_{H,i,n} = [\hat\theta_{H,i} -
  \sigma a_{i,n},\hat\theta_{H,i} + \sigma a_{i,n}]$ be the
corresponding interval for the hard-thresholding estimator assuming
knowledge of $\sigma^2$. In contrast to the known-variance case, it is
not possible anymore to come up with a closed-form expression for the
infimal coverage probability of $E_{H,i,n}$. Instead, we uniformly
bound the coverage probability from below therefore allowing for
conservative confidence intervals in finite samples. For the case when
the number of degrees of freedom $n-k$ diverges sufficiently fast in
relation to the tuning parameter $\eta_{i,n}$, we show that the
difference in the infimal coverage probabilities from the known- and
unknown-variance case tends to zero, yielding formulas for confidence
sets in the unknown-variance case that asymptotically have the exact
prescribed coverage. We start by giving the bound in finite samples.

\begin{proposition}
\label{confunknownHfs}
For every $n \geq 2$, we have
\begin{align*}
\inf_{\theta \in \R^k,\sigma \in \R_+} P_{n,\theta,\sigma}(\theta_i
\in E_{H,i,n}) \; \geq \; T_{n-k}(n^{1/2}(a_{i,n}/\xi_{i,n} -
\eta_{i,n})) - T_{n-k}(-n^{1/2}a_{i,n}/\xi_{i,n}).
\end{align*}
\end{proposition}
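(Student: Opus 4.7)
The plan is to leverage the integral representation in~(\ref{confknownunknown}) together with Theorem~\ref{confknownfs}(a) for the symmetric known-variance case. By the scale equivariance noted just before~(\ref{confknownunknown}), it suffices to work with $\sigma = 1$. Under the Gaussian model $\hat\theta_{LS}$ and $\hat\sigma$ are independent, and when $\sigma = 1$ the estimator $\hat\sigma$ has density $\rho_{n-k}$. Conditionally on $\hat\sigma = s$, the statistic $\tilde\theta_{H,i}$ agrees pointwise with the infeasible hard-thresholding estimator $\hat\theta_{H,i}$ evaluated at the rescaled threshold $s\eta_{i,n}$, so that
\[
P_{n,\theta,1}(\theta_i \in E_{H,i,n}) = \int_0^\infty p^i_n(\theta,1,a_{i,n}s,\eta_{i,n}s)\,\rho_{n-k}(s)\,ds,
\]
and consequently
\[
\inf_{\theta}P_{n,\theta,1}(\theta_i \in E_{H,i,n}) \ge \int_0^\infty \inf_{\theta} p^i_n(\theta,1,a_{i,n}s,\eta_{i,n}s)\,\rho_{n-k}(s)\,ds,
\]
by the trivial exchange $\inf \int \ge \int \inf$.

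Next I would apply Theorem~\ref{confknownfs}(a) to the integrand: since the two half-widths coincide we are in the symmetric case, and the side condition $\xi_{i,n}\cdot s\eta_{i,n} \le sa_{i,n}+sa_{i,n}$ collapses to $\xi_{i,n}\eta_{i,n}\le 2a_{i,n}$, a condition that is \emph{independent} of $s$. In the branch where it holds, the inner infimum equals
\[
\Phi\bigl(n^{1/2}s(a_{i,n}/\xi_{i,n}-\eta_{i,n})\bigr) - \Phi\bigl(-n^{1/2}s\,a_{i,n}/\xi_{i,n}\bigr),
\]
and the identity $T_{n-k}(x) = \int_0^\infty \Phi(xs)\rho_{n-k}(s)\,ds$ recalled in Section~\ref{model} converts the two pieces of the integral into $T_{n-k}(n^{1/2}(a_{i,n}/\xi_{i,n}-\eta_{i,n}))$ and $T_{n-k}(-n^{1/2}a_{i,n}/\xi_{i,n})$ respectively, giving the claimed bound. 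In the opposite branch $\xi_{i,n}\eta_{i,n}>2a_{i,n}$, the inner infimum is $0$ for every $s>0$; but then $a_{i,n}/\xi_{i,n}-\eta_{i,n}<-a_{i,n}/\xi_{i,n}$, so the right-hand side of the proposition is negative by monotonicity of $T_{n-k}$, and the inequality is vacuous.

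The argument is essentially bookkeeping once the conditional representation is in place; there is no delicate measure-theoretic issue because the integrands are continuous in $s$. The main point to verify—and the only place where Theorem~\ref{confknownfs}(a) could in principle have caused trouble—is that the width-versus-threshold side condition survives the simultaneous $s$-rescaling of $a_{i,n}$ and $\eta_{i,n}$ without becoming $s$-dependent, so that a single dichotomy rather than an $s$-varying one determines which branch of Theorem~\ref{confknownfs}(a) governs the integrand uniformly in $s$. This is precisely what allows the integral to collapse cleanly into the stated difference of two $T_{n-k}$ values.
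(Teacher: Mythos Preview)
Your proof is correct and follows essentially the same route as the paper: use the conditional integral representation~(\ref{confknownunknown}), interchange infimum and integral, apply Theorem~\ref{confknownfs}(a) to the integrand, and collapse via $T_{n-k}(x)=\int_0^\infty\Phi(xs)\rho_{n-k}(s)\,ds$. Your explicit observation that the side condition $\xi_{i,n}\eta_{i,n}\le 2a_{i,n}$ is $s$-independent, and your handling of the vacuous branch, are exactly what the paper does (it notes separately that the bound is trivial when $a_{i,n}\le\xi_{i,n}\eta_{i,n}/2$).
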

%%%%%%%%%%%%%%%%%%%%%%%%%%%%%%%%%%%%%%%%%%%%%%%%%%%%%%%%%%%%%%%%%%%%%%%%%%%%
Note that the bound is trivial in case $a_{i,n} \leq
\xi_{i,n}\eta_{i,n}/2$. The bound is ``asymptotically sharp'' when
$n-k \to \infty$ and $n^{1/2}\eta_{i,n}/(n-k)^{1/2} \to 0$ in the
sense that the difference between the left- and the right-hand side of
the above display then converges to zero. This can be seen from the
following theorem, together with the fact that by
Theorem~\ref{confknownfs}(a) the difference between the corresponding
infimal coverage probability in the known-variance case and the above
bound is less than or equal to $2 \|\Phi - T_{n-k}\|_\infty$, where
$\|\Phi - T_{n-k}\|_\infty = \sup_{x \in \R}|\Phi(x) - T_{n-k}(x)| \to
0$ as $n-k \to \infty$ by Polya's Theorem.

\begin{theorem}
\label{confequivH}
Suppose that for given $i \geq 1$ satisfying $i \leq k = k(n)$ for
large enough $n$ we have $\xi_{i,n}\eta_{i,n} \to 0$, $n-k \to \infty$, and
$n^{1/2}\eta_{i,n}/(n-k)^{1/2} \to 0$. Then, for any sequence
$a_{i,n}$ of non-negative real numbers, we have
$$
\inf_{\theta \in \R^k,\sigma \in \R_+} 
P_{n,\theta,\sigma}(\theta_i \in E_{H,i,n}) - 
\inf_{\theta \in \R^k} 
P_{n,\theta,\sigma}(\theta_i \in C_{H,i,n}) \longrightarrow 0.
$$
\end{theorem}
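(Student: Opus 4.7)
The plan is to sandwich the unknown-variance infimal coverage between quantities that both converge to the known-variance infimum. By the scale equivariance identity in (\ref{confknownunknown}) it suffices to work with $\sigma = 1$, and the representation
$$
P_{n,\theta,1}(\theta_i \in E_{H,i,n}) = \int_0^\infty p^i_n(\theta, 1, a_{i,n}s, \eta_{i,n}s)\,\rho_{n-k}(s)\,ds
$$
expresses the unknown-variance coverage as a $\rho_{n-k}$-mixture of known-variance coverages with rescaled parameters $(a_{i,n}s,\eta_{i,n}s)$.

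For the lower bound, Proposition \ref{confunknownHfs} combined with Polya's theorem $\|T_{n-k}-\Phi\|_\infty \to 0$ (valid since $n-k \to \infty$) gives
$$
\inf_{\theta,\sigma} P_{n,\theta,\sigma}(\theta_i \in E_{H,i,n}) \geq \Phi\bigl(n^{1/2}(a_{i,n}/\xi_{i,n}-\eta_{i,n})\bigr) - \Phi\bigl(-n^{1/2}a_{i,n}/\xi_{i,n}\bigr) - o(1),
$$
and by Theorem \ref{confknownfs}(a) the right-hand side (without the $o(1)$) equals $\inf_\theta P_{n,\theta,\sigma}(\theta_i \in C_{H,i,n})$ in the non-trivial case $\xi_{i,n}\eta_{i,n} \leq 2a_{i,n}$; the trivial case $\xi_{i,n}\eta_{i,n} > 2a_{i,n}$ gives a known-variance infimum of zero and poses no obstruction.

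For the matching upper bound I would fix a near-minimizing sequence $\theta_n^*$ for the known-variance coverage and set $f_n(s) := p^i_n(\theta_n^*, 1, a_{i,n}s, \eta_{i,n}s)$, so that the unknown-variance coverage at $\theta_n^*$ equals $\int_0^\infty f_n(s)\rho_{n-k}(s)\,ds$ and the known-variance coverage equals $f_n(1)$. A mean-value bound combined with the derivative estimate $|f_n'(s)| \leq a_{i,n}|\partial_a p^i_n| + \eta_{i,n}|\partial_\eta p^i_n| \leq C n^{1/2}(a_{i,n}/\xi_{i,n}+\eta_{i,n})$, obtained by direct differentiation of the coverage expression and using that all normal density factors are bounded by $\phi(0)$ uniformly in $\theta_n^*$, together with $E|S_{n-k}-1| = O((n-k)^{-1/2})$ for $S_{n-k}=\sqrt{\chi^2_{n-k}/(n-k)}$, yields
$$
\bigl|P_{n,\theta_n^*,1}(\theta^*_{n,i} \in E_{H,i,n}) - f_n(1)\bigr| = O\!\left(\frac{n^{1/2}(a_{i,n}/\xi_{i,n}+\eta_{i,n})}{\sqrt{n-k}}\right).
$$
This is $o(1)$ under the hypothesis $n^{1/2}\eta_{i,n}/\sqrt{n-k}\to 0$ whenever $a_{i,n}/\xi_{i,n}$ is of order $\eta_{i,n}$ or smaller; in the complementary edge case $n^{1/2}a_{i,n}/\xi_{i,n}\to\infty$, the known-variance infimum tends to $1$ and the lower bound forces $\inf_{\theta,\sigma} P_{n,\theta,\sigma}(\theta_i \in E_{H,i,n}) \to 1$ as well, making the difference automatically $o(1)$.

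The main obstacle is establishing the derivative bound on $f_n'$ uniformly in $\theta_n^*$: because the infimum of the known-variance coverage is approached as $\theta \to a_{i,n}^+$ across the discontinuity of $\hat\theta_{H,i}$ at the threshold, one must carefully combine contributions from the atomic and absolutely continuous parts of the distribution in Proposition \ref{fsdistH} and verify that the sensitivity of the coverage to the rescaling $(a,\eta)\mapsto(as,\eta s)$ remains controlled by bounded normal density factors, so that the product with $E|S_{n-k}-1|$ absorbs the $n^{1/2}\eta_{i,n}$-sized derivative precisely under the stated rate condition. Combining the lower and upper bounds then yields the claimed convergence to zero.
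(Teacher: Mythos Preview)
Your overall strategy---sandwich the unknown-variance infimum between the lower bound from Proposition~\ref{confunknownHfs} (sharpened via Polya) and an upper bound obtained by evaluating the $\rho_{n-k}$-mixture at a near-minimizer $\theta_n^*$---is exactly the paper's approach. The lower-bound half is identical to the paper's. The gap is in the upper-bound half, specifically in the derivative bound you invoke.

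The function $s \mapsto f_n(s) = p^i_{H,n}(\theta_n^*,1,a_{i,n}s,\eta_{i,n}s)$ is \emph{not} differentiable (or even continuous) in $s$: it has a jump discontinuity at $s^* = |\theta_n^*|/a_{i,n}$, arising from the indicator $\ind(|\theta_n^*| \leq a_{i,n}s)$ that governs whether the atom of $\hat\theta_{H,i}$ at zero lies inside the interval. The jump size is $P(|\hat\theta_{LS,i}| \leq \xi_{i,n}\eta_{i,n}s^*)$, which in the consistent-tuning regime ($n^{1/2}\eta_{i,n} \to \infty$, $a_{i,n}/(\xi_{i,n}\eta_{i,n})$ bounded) is of order~$1$, not ``controlled by bounded normal density factors'' as you hope. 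If $\theta_n^*$ is taken too close to $a_{i,n}$ (which is where the known-variance infimum is approached), the jump sits at $s^*$ close to~$1$, well inside the $O((n-k)^{-1/2})$-concentration window of $\rho_{n-k}$, and the mean-value estimate $|\int f_n\,d\rho_{n-k} - f_n(1)| \lesssim \sup|f_n'|\cdot E|S_{n-k}-1|$ fails.

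The paper's resolution is not to prove the derivative bound through the jump but to \emph{place the jump outside the concentration window}: it takes $\theta_{i,n}(\eps) = a_{i,n}(1 + c(\eps)(n-k)^{-1/2})$, so that for all $s$ with $|s-1| \leq c(\eps)(n-k)^{-1/2}$ one has $s a_{i,n} < \theta_{i,n}(\eps) \leq s(a_{i,n}+\xi_{i,n}\eta_{i,n})$ and hence a single explicit formula $p^i_{H,n} = \max\bigl(0,\Phi(n^{1/2}(\theta_{i,n}(\eps)/\xi_{i,n}-\eta_{i,n}s)) - \Phi(-n^{1/2}a_{i,n}s/\xi_{i,n})\bigr)$ applies throughout. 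On that window the Lipschitz constant in $s$ is indeed $O(n^{1/2}(a_{i,n}/\xi_{i,n}+\eta_{i,n}))$, and the same Lipschitz argument shows $f_n(1)$ differs from the true infimum by $O(n^{1/2}a_{i,n}c(\eps)/(\xi_{i,n}\sqrt{n-k}))$. Both errors are $o(1)$ in the remaining (Step~4) regime where $a_{i,n}/(\xi_{i,n}\eta_{i,n})$ is bounded. So the fix is to choose $\theta_n^*$ at the \emph{right} $(n-k)^{-1/2}$-scale offset from $a_{i,n}$, not arbitrarily close.

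A secondary point: your two cases ``$a_{i,n}/\xi_{i,n}$ of order $\eta_{i,n}$ or smaller'' and ``$n^{1/2}a_{i,n}/\xi_{i,n}\to\infty$'' are not complementary (take $n^{1/2}\eta_{i,n}\to 0$, $n^{1/2}a_{i,n}/\xi_{i,n}\to c\in(0,\infty)$). The paper handles this with a four-step subsequence decomposition (Steps~1--3 dispose of the degenerate regimes; Step~4 reduces to $a_{i,n}/(\xi_{i,n}\eta_{i,n})$ bounded), which you should adopt.
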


\begin{remark} 
In case of conservative tuning, the condition
$n^{1/2}\eta_{i,n}/(n-k)^{1/2} \to 0$ in Theorem~\ref{confequivH}
always holds when $n-k \to \infty$. In fact, if $\lim_{n \to \infty}
n^{1/2}\eta_{i,n} = e_i \neq 0$, the condition is equivalent to $n-k
\to \infty$. In case of consistent tuning, $n-k \to \infty$ is a
necessary condition for $n^{1/2}\eta_{i,n}/(n-k)^{1/2} \to 0$ only, and it
means that $n-k$ needs to diverge ``fast enough'' in relation to the
tuning parameter.
\end{remark}

%%%%%%%%%%%%%%%%%%%%%%%%%%%%%%%%%%%%%%%%%%%%%%%%%%%%%%%%%%%%%%%%%%%%%%%%%%%%
%%%%%%%%%%%%%%%%%%%%%%%%%%%%%%%%%%%%%%%%%%%%%%%%%%%%%%%%%%%%%%%%%%%%%%%%%%%%

\subsubsection{Soft-thresholding}

We consider intervals of the form $E_{S,i,n} = [\tilde\theta_{S,i} -
  \hat\sigma a_{i,n},\tilde\theta_{S,i} + \hat\sigma a_{i,n}]$ based
on the soft-thresholding estimator, where $a_{i,n}$ is a non-negative
real number. Let $C_{S,i,n} = [\hat\theta_{S,i} - \sigma
  a_{i,n},\hat\theta_{S,i} + \sigma a_{i,n}]$ be the corresponding
interval for the soft-thresholding estimator assuming knowledge of
$\sigma^2$. In contrast to hard-thresholding and adaptive
soft-thresholding, it is indeed possible to derive a closed-form
expression for the infimal coverage probabilities for intervals based
on soft-thresholding also in the unknown-variance case.

\begin{proposition}
\label{confunknownSfs}
For every $n \geq 2$, we have
\begin{align*}
\inf_{\theta \in \R^k,\sigma \in \R_+} 
P_{n,\theta,\sigma}(\theta_i & \in E_{S,i,n})
= T_{n-k}(n^{1/2}(a_{i,n}/\xi_{i,n} - \eta_{i,n})) - 
T_{n-k}(n^{1/2}(-a_{i,n}/\xi_{i,n} - \eta_{i,n})).
\end{align*}
\end{proposition}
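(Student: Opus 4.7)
The plan is to combine the conditional representation~(\ref{confknownunknown}) with the closed-form known-variance result of Theorem~\ref{confknownfs}(b), exploiting a structural feature that is special to soft-thresholding. First, by the scale-equivariance noted after~(\ref{confknownunknown}), it suffices to consider $\sigma=1$. Since $\hat\sigma$ is independent of $\hat\theta_{LS}$ with $\hat\sigma$ distributed as $\sqrt{\chi^2_{n-k}/(n-k)}$, conditioning gives
$$P_{n,\theta,1}(\theta_i\in E_{S,i,n}) \;=\; \int_0^\infty p_n^i\bigl(\theta,1,a_{i,n}s,\eta_{i,n}s\bigr)\,\rho_{n-k}(s)\,ds,$$
where $p_n^i(\theta,1,a,\eta):=P_{n,\theta,1}\bigl(\theta_i\in[\hat\theta_{S,i}(\eta)-a,\hat\theta_{S,i}(\eta)+a]\bigr)$ is the coverage probability of the symmetric infeasible interval of half-width $a$ with threshold parameter $\eta$.

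For the lower bound, apply Theorem~\ref{confknownfs}(b) with $a_{i,n}$ replaced by $a_{i,n}s$ and $\eta_{i,n}$ by $\eta_{i,n}s$: for every $s>0$,
$$\inf_{\theta\in\R^k} p_n^i(\theta,1,a_{i,n}s,\eta_{i,n}s) \;=\; \Phi\bigl(n^{1/2}s(a_{i,n}/\xi_{i,n}-\eta_{i,n})\bigr) - \Phi\bigl(n^{1/2}s(-a_{i,n}/\xi_{i,n}-\eta_{i,n})\bigr).$$
Pulling the infimum inside the $s$-integral and using the identity $T_m(x)=\int_0^\infty \Phi(xs)\rho_m(s)\,ds$ with $m=n-k$ yields
$$\inf_{\theta\in\R^k} P_{n,\theta,1}(\theta_i\in E_{S,i,n}) \;\geq\; T_{n-k}\bigl(n^{1/2}(a_{i,n}/\xi_{i,n}-\eta_{i,n})\bigr) - T_{n-k}\bigl(n^{1/2}(-a_{i,n}/\xi_{i,n}-\eta_{i,n})\bigr).$$

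For the matching upper bound, the key observation is that the infimum in Theorem~\ref{confknownfs}(b) is attained in the limiting sense $|\theta_i|\to\infty$, and critically, the \emph{same} diverging sequence works simultaneously for every $s>0$. Indeed, for $\theta_i\to+\infty$ and any fixed $s>0$, one has $P(\hat\theta_{LS,i}>0)\to 1$, on which event $\hat\theta_{S,i}(\eta_{i,n}s)=\hat\theta_{LS,i}-\xi_{i,n}\eta_{i,n}s$, whence
$$p_n^i(\theta,1,a_{i,n}s,\eta_{i,n}s) \;\longrightarrow\; \Phi\bigl(n^{1/2}s(a_{i,n}/\xi_{i,n}+\eta_{i,n})\bigr) - \Phi\bigl(n^{1/2}s(\eta_{i,n}-a_{i,n}/\xi_{i,n})\bigr),$$
which (by $\Phi(x)+\Phi(-x)=1$) is exactly the infimum expression displayed above. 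Taking any sequence $\theta^{(m)}$ with $\theta^{(m)}_i\to\infty$ (other components immaterial, since $p_n^i$ depends on $\theta$ only through $\theta_i$) and applying dominated convergence with the trivial bound $p_n^i\le 1$ yields
$$\int_0^\infty p_n^i(\theta^{(m)},1,a_{i,n}s,\eta_{i,n}s)\rho_{n-k}(s)\,ds \;\longrightarrow\; T_{n-k}\bigl(n^{1/2}(a_{i,n}/\xi_{i,n}-\eta_{i,n})\bigr) - T_{n-k}\bigl(n^{1/2}(-a_{i,n}/\xi_{i,n}-\eta_{i,n})\bigr),$$
which delivers the required matching upper bound.

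The main obstacle is the last step: verifying that the minimizing sequence of $\theta$ can be chosen independently of $s$, so that the inf--integral interchange is sharp. This succeeds for soft-thresholding because the bias term $-\sign(\hat\theta_{LS,i})\xi_{i,n}\eta_{i,n}\hat\sigma$ scales \emph{linearly} with the threshold, so that multiplying the threshold by $s$ preserves the sign structure that drives the large-$|\theta_i|$ asymptotics. Precisely this linearity fails for hard-thresholding (where the minimizing $\theta$ in Theorem~\ref{confknownfs}(a) depends in a non-linear way on $\eta_{i,n}$ through the indicator threshold) and for adaptive soft-thresholding (through the $\eta_{i,n}^2$-bias), explaining why only the lower bound in Proposition~\ref{confunknownHfs} — and its analogue for adaptive soft-thresholding — is available in those cases.
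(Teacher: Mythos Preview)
Your proof is correct and follows essentially the same route as the paper: both obtain the lower bound by pulling the infimum inside the $s$-integral and invoking Theorem~\ref{confknownfs}(b), and both obtain the matching upper bound by sending $\theta_i\to\infty$ and applying dominated convergence. The only difference is cosmetic: the paper cites an external reference for the limit of $p^i_{S,n}(\theta,1,a_{i,n}s,\eta_{i,n}s)$ as $\theta_i\to\infty$, whereas you derive it directly from the explicit form of the soft-thresholding estimator; your added remark on why the argument fails for hard- and adaptive soft-thresholding is a useful commentary but not part of the paper's proof.
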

%%%%%%%%%%%%%%%%%%%%%%%%%%%%%%%%%%%%%%%%%%%%%%%%%%%%%%%%%%%%%%%%%%%%%%%%%%%%
We can derive the following theorem, the asymptotic equivalence of the
infimal coverage probabilities in the known- and unknown-variance case
when $n-k \to \infty$, as an immediate consequence of
Proposition~\ref{confunknownSfs}, together with
Theorem~\ref{confknownfs}(b) and the fact that $\|\Phi -
T_{n-k}\|_\infty \to 0$ as $n-k \to \infty$.

\begin{theorem}
\label{confequivS}
Suppose that for given $i \geq 1$ satisfying $i \leq k = k(n)$ for
large enough $n$ we have $\xi_{i,n}\eta_{i,n} \to 0$. Then, for any
sequence $a_{i,n}$ of non-negative real numbers, we have
$$
\inf_{\theta \in \R^k,\sigma \in \R_+} 
P_{n,\theta,\sigma}(\theta_i \in E_{S,i,n}) - 
\inf_{\theta \in \R^k} 
P_{n,\theta,\sigma}(\theta_i \in C_{S,i,n}) \longrightarrow 0 
$$
as $n-k \to \infty$. 
\end{theorem}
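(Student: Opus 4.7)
The plan is to combine the two explicit formulas we already have in hand: the unknown-variance infimum from Proposition~\ref{confunknownSfs}, and the known-variance infimum for the symmetric interval from Theorem~\ref{confknownfs}(b). Both expressions have exactly the same structure, differing only in the replacement of the standard normal cdf $\Phi$ by the $t_{n-k}$ cdf $T_{n-k}$. So the result should fall out of a uniform comparison between these two cdfs.

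First, I would specialize Theorem~\ref{confknownfs}(b) to the symmetric case $a_{i,n} = b_{i,n}$ (which is exactly the shape of $C_{S,i,n}$). Under either of the two cases in that theorem (both collapse since $a_{i,n} \leq b_{i,n}$ and $a_{i,n} \geq b_{i,n}$ both hold with equality), the infimal coverage probability equals
\[
\Phi\bigl(n^{1/2}(a_{i,n}/\xi_{i,n} - \eta_{i,n})\bigr) - \Phi\bigl(n^{1/2}(-a_{i,n}/\xi_{i,n} - \eta_{i,n})\bigr).
\]
Proposition~\ref{confunknownSfs} gives the same expression with $T_{n-k}$ in place of $\Phi$.

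Next, I would subtract these two closed forms. Writing $u_n = n^{1/2}(a_{i,n}/\xi_{i,n} - \eta_{i,n})$ and $v_n = n^{1/2}(-a_{i,n}/\xi_{i,n} - \eta_{i,n})$, the difference of the two infimal coverage probabilities is
\[
\bigl[T_{n-k}(u_n) - \Phi(u_n)\bigr] - \bigl[T_{n-k}(v_n) - \Phi(v_n)\bigr],
\]
whose absolute value is bounded by $2\,\|\Phi - T_{n-k}\|_\infty$ uniformly in $u_n, v_n$, that is, uniformly in the sequence $a_{i,n}$. The point is that this bound is entirely independent of the choice of $a_{i,n}$, which is crucial because the theorem is stated for an arbitrary sequence.

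Finally, I would invoke Polya's theorem: since the $t$-distribution with $m$ degrees of freedom converges pointwise to the standard normal distribution as $m \to \infty$, and both cdfs are continuous, the convergence is uniform, i.e.\ $\|\Phi - T_{n-k}\|_\infty \to 0$ as $n-k \to \infty$. Hence the difference above goes to zero, proving the claim. There is no genuine obstacle here; the work was already done in Proposition~\ref{confunknownSfs}, and the present theorem is essentially a direct corollary. The only item worth verifying carefully is the $a_{i,n} = b_{i,n}$ specialization of Theorem~\ref{confknownfs}(b), which is immediate from inspection.
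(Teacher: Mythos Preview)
Your proposal is correct and follows exactly the approach the paper takes: the paper states this theorem as an immediate consequence of Proposition~\ref{confunknownSfs} together with Theorem~\ref{confknownfs}(b) and the fact that $\|\Phi - T_{n-k}\|_\infty \to 0$ by Polya's theorem. Your write-up just fills in the arithmetic of that one-line argument.
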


%%%%%%%%%%%%%%%%%%%%%%%%%%%%%%%%%%%%%%%%%%%%%%%%%%%%%%%%%%%%%%%%%%%%%%%%%%%%
%%%%%%%%%%%%%%%%%%%%%%%%%%%%%%%%%%%%%%%%%%%%%%%%%%%%%%%%%%%%%%%%%%%%%%%%%%%%

\subsubsection{Adaptive soft-thresholding}

We consider intervals of the form $E_{AS,i,n} = [\tilde\theta_{AS,i} -
  \hat\sigma a_{i,n},\tilde\theta_{AS,i} + \hat\sigma a_{i,n}]$ based
on the adaptive soft-thresholding estimator, where $a_{i,n}$ is a
non-negative real number. Let $C_{AS,i,n} = [\hat\theta_{AS,i} -
  \sigma a_{i,n},\hat\theta_{AS,i} + \sigma a_{i,n}]$ be the
corresponding interval in the known-variance case. Just as for the
hard-thresholding estimator, we uniformly bound the coverage
probability from below allowing for conservative confidence intervals
in finite samples. For the case when the number of degrees of freedom
$n-k$ diverges sufficiently fast in relation to the tuning parameter
$\eta_{i,n}$, we show asymptotic equivalence of the infimal coverage
probabilities in the known- and unknown-variance case.

\begin{proposition}
\label{confunknownASfs}
For every $n \geq 2$, we have
\begin{align*}
\inf_{\theta \in \R^k,\sigma \in \R_+} &
P_{n,\theta,\sigma}(\theta_i \in E_{AS,i,n}) \\
& \geq \; T_{n-k}(n^{1/2}(a_{i,n}/\xi_{i,n} - \eta_{i,n})) - 
T_{n-k}(-n^{1/2}\sqrt{(a_{i,n}/\xi_{i,n})^2 + \eta^2_{i,n}}).
\end{align*}
\end{proposition}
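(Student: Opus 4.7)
My plan is to mirror the proof of Proposition~\ref{confunknownHfs}: use the scale-equivariance noted at the start of Section~\ref{confunknown} to reduce to $\sigma=1$, then push the infimum over $\theta$ inside the integral in (\ref{confknownunknown}), evaluate the inner infimum using part (c) of Theorem~\ref{confknownfs}, and finally recognize the resulting $s$-integrals as values of $T_{n-k}$.

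In more detail, conditioning on $\hat\sigma$ and using that $\hat\sigma/\sigma$ is independent of $\hat\theta_{LS}$ with density $\rho_{n-k}$ under Assumption~A, I would first rewrite
$$P_{n,\theta,1}(\theta_i \in E_{AS,i,n}) = \int_0^\infty p^i_n(\theta,1,a_{i,n}s,\eta_{i,n}s)\,\rho_{n-k}(s)\,ds,$$
where $p^i_n(\theta,1,a,\eta)$ denotes the known-variance coverage probability of the symmetric interval $[\hat\theta_{AS,i}(\eta)-a,\hat\theta_{AS,i}(\eta)+a]$. The key algebraic point is that, conditional on $\hat\sigma = s$ at $\sigma=1$, the indicators and threshold-subtractions appearing in the componentwise definition of $\tilde\theta_{AS,i}$ are precisely those of $\hat\theta_{AS,i}(\eta_{i,n}s)$, while the half-width $\hat\sigma a_{i,n}$ becomes $a_{i,n}s$; this is just the homogeneity of $\hat\sigma\xi_{i,n}\eta_{i,n}$ in $\hat\sigma$. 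Taking the infimum over $\theta$ and moving it inside the integral gives
$$\inf_{\theta \in \R^k} P_{n,\theta,1}(\theta_i \in E_{AS,i,n}) \geq \int_0^\infty \inf_{\theta \in \R^k} p^i_n(\theta,1,a_{i,n}s,\eta_{i,n}s)\,\rho_{n-k}(s)\,ds.$$

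For each $s>0$ the inner infimum is furnished by Theorem~\ref{confknownfs}(c) with symmetric half-widths $a=b=a_{i,n}s$ and tuning parameter $\eta_{i,n}s$; when $a=b$, the argument of the second $\Phi$ in that formula collapses to $-n^{1/2}\sqrt{(a/\xi_{i,n})^2+\eta_{i,n}^2}$, so the inner infimum equals
$$\Phi\bigl(n^{1/2}s(a_{i,n}/\xi_{i,n}-\eta_{i,n})\bigr) - \Phi\bigl(-n^{1/2}s\sqrt{(a_{i,n}/\xi_{i,n})^2+\eta_{i,n}^2}\bigr).$$
Integrating against $\rho_{n-k}(s)\,ds$ and invoking the identity $T_m(x)=\int_0^\infty \Phi(xs)\rho_m(s)\,ds$ quoted at the end of Section~\ref{model} turns each of the two $\Phi$-integrals into a value of $T_{n-k}$, yielding exactly the claimed lower bound. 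The only non-routine point is the verification that $\hat\sigma$ can indeed be absorbed by the simultaneous replacement $(a_{i,n},\eta_{i,n})\mapsto(a_{i,n}s,\eta_{i,n}s)$ inside $p^i_n$; once this scaling step is established, the remainder is a mechanical application of Theorem~\ref{confknownfs}(c) together with the $T_m$ integral representation.
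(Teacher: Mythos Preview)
Your proposal is correct and follows essentially the same approach as the paper: reduce to $\sigma=1$ via scale-equivariance, use the conditioning identity (\ref{confknownunknown}) to express the coverage as $\int_0^\infty p^i_{AS,n}(\theta,1,a_{i,n}s,\eta_{i,n}s)\rho_{n-k}(s)\,ds$, push the infimum inside, evaluate it via Theorem~\ref{confknownfs}(c), and collapse the resulting $\Phi$-integrals to $T_{n-k}$ using the representation at the end of Section~\ref{model}. The ``non-routine'' scaling point you flag is exactly what the paper establishes in the discussion leading to (\ref{confknownunknown}), so nothing additional is needed there.
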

This bound is ``asymptotically sharp'' when $n-k \to \infty$ and
$n^{1/2}\eta_{i,n}/(n-k)^{1/2} \to 0$ in the sense that the difference
between the left- and the right-hand side of the above display then
converges to zero. This can be seen from the following theorem
together with the fact that by Theorem~\ref{confknownfs}(c) the
difference between the corresponding infimal coverage probability in
the known-variance case and the above bound is less than or equal to
$2 \|\Phi - T_{n-k}\|_\infty$, where $\|\Phi - T_{n-k}\|_\infty \to 0$
as $n-k \to \infty$.

\begin{theorem}
\label{confequivAS}
Suppose that for given $i \geq 1$ satisfying $i \leq k = k(n)$ for
large enough $n$ we have $\xi_{i,n}\eta_{i,n} \to 0$, $n-k \to \infty$, and
$n^{1/2}\eta_{i,n}/(n-k)^{1/2} \to 0$. Then, for any sequence
$a_{i,n}$ of non-negative real numbers, we have
$$
\inf_{\theta \in \R^k,\sigma \in \R_+} 
P_{n,\theta,\sigma}(\theta_i \in E_{AS,i,n}) - 
\inf_{\theta \in \R^k} 
P_{n,\theta,\sigma}(\theta_i \in C_{AS,i,n}) \longrightarrow 0. 
$$
\end{theorem}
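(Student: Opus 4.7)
The plan is to sandwich the quantity
$\Delta_n := \inf_{\theta \in \R^k,\sigma \in \R_+} g_n(\theta,\sigma) - \inf_{\theta \in \R^k} f_n(\theta,\sigma)$
between two sequences converging to zero, where $f_n(\theta,\sigma) := P_{n,\theta,\sigma}(\theta_i \in C_{AS,i,n})$ and $g_n(\theta,\sigma) := P_{n,\theta,\sigma}(\theta_i \in E_{AS,i,n})$. Set $x_n := n^{1/2}(a_{i,n}/\xi_{i,n} - \eta_{i,n})$ and $y_n := n^{1/2}\sqrt{(a_{i,n}/\xi_{i,n})^2 + \eta_{i,n}^2}$. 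Specializing Theorem~\ref{confknownfs}(c) to the symmetric case $a_{i,n}=b_{i,n}$ identifies the known-variance infimum as $K_n := \Phi(x_n) - \Phi(-y_n)$, and Proposition~\ref{confunknownASfs} gives $\inf_{\theta,\sigma}g_n \geq L_n := T_{n-k}(x_n) - T_{n-k}(-y_n)$. For the easy direction $\liminf_n \Delta_n \geq 0$: since $T_m \to \Phi$ pointwise as $m\to\infty$ with $\Phi$ continuous, Polya's theorem yields $\|T_{n-k}-\Phi\|_\infty \to 0$, so $|K_n - L_n| \leq 2\|T_{n-k}-\Phi\|_\infty = o(1)$, and consequently $\inf g_n \geq L_n = K_n - o(1) = \inf f_n - o(1)$.

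For the matching bound $\limsup_n \Delta_n \leq 0$, I would pick a near-minimizing sequence $\theta^{(n)} \in \R^{k(n)}$ with $f_n(\theta^{(n)},1) \leq K_n + 1/n$ (available by the definition of infimum). By scale equivariance and the representation \eqref{confknownunknown} at $\sigma = 1$,
$$\inf_{\theta,\sigma} g_n \leq g_n(\theta^{(n)},1) = \int_0^\infty p^i_n(\theta^{(n)},1,a_{i,n}s,\eta_{i,n}s)\,\rho_{n-k}(s)\,ds,$$
where $p^i_n$ denotes the known-variance coverage probability. It therefore suffices to show
$$\int_0^\infty \bigl[p^i_n(\theta^{(n)},1,a_{i,n}s,\eta_{i,n}s) - p^i_n(\theta^{(n)},1,a_{i,n},\eta_{i,n})\bigr]\rho_{n-k}(s)\,ds \longrightarrow 0.$$
I would split this integral at $|s-1| \leq \delta_n$ for a sequence $\delta_n \to 0$. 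The tail part vanishes because the integrand is uniformly bounded by $1$ and the concentration of $\sqrt{\chi^2_{n-k}/(n-k)}$ at $1$ guarantees $\int_{|s-1|>\delta_n}\rho_{n-k}(s)\,ds \to 0$ whenever $\delta_n(n-k)^{1/2} \to \infty$.

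The main obstacle is obtaining a uniform-in-$\theta^{(n)}$ bound on the central region. Here the dependence of $p^i_n$ on $s$ enters through $\Phi$-evaluations at the arguments $z^{(1)}_{n,\theta,\sigma}(xs,s\eta_{i,n})$ and $z^{(2)}_{n,\theta,\sigma}(xs,s\eta_{i,n})$ appearing in \eqref{fscdfAS}. A direct calculation using the explicit formulas for these arguments shows that replacing $(a_{i,n},\eta_{i,n})$ by $(sa_{i,n},s\eta_{i,n})$ with $|s-1|\leq \delta_n$ perturbs each $\Phi$-argument by a quantity bounded in modulus, uniformly in $\theta$, by a constant times $\delta_n n^{1/2}\eta_{i,n}$, where the boundedness exploits $\xi_{i,n}\eta_{i,n} \to 0$. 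Applying the universal Lipschitz estimate $|\Phi(u)-\Phi(v)|\leq (2\pi)^{-1/2}|u-v|$ and integrating against $\rho_{n-k}$ then gives a central-region contribution of order $\delta_n n^{1/2}\eta_{i,n}$, uniform in $\theta^{(n)}$. Under the hypothesis $n^{1/2}\eta_{i,n}/(n-k)^{1/2}\to 0$, one may choose $\delta_n$ so that simultaneously $\delta_n(n-k)^{1/2}\to\infty$ and $\delta_n n^{1/2}\eta_{i,n}\to 0$, which closes the argument.
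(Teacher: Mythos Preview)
Your lower-bound direction is fine and matches the paper. The gap is in the upper-bound direction, specifically in the claim that replacing $(a_{i,n},\eta_{i,n})$ by $(sa_{i,n},s\eta_{i,n})$ perturbs each $\Phi$-argument by at most a constant times $\delta_n n^{1/2}\eta_{i,n}$, \emph{uniformly in $\theta$}. This bound is not correct as stated: a direct computation of the arguments (of the form $n^{1/2}\bigl((sa-\theta)/(2\xi_{i,n}) \pm \sqrt{((sa+\theta)/(2\xi_{i,n}))^2 + s^2\eta^2}\,\bigr)$) shows that the perturbation is of order $\delta_n n^{1/2}(a_{i,n}/\xi_{i,n}+\eta_{i,n})$, not $\delta_n n^{1/2}\eta_{i,n}$. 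The assumption $\xi_{i,n}\eta_{i,n}\to 0$ you invoke does not remove the dependence on $a_{i,n}$. Since the theorem must hold for \emph{any} sequence $a_{i,n}$, and there is no hypothesis linking $n^{1/2}a_{i,n}/\xi_{i,n}$ to $(n-k)^{1/2}$, you cannot in general choose $\delta_n$ with both $\delta_n(n-k)^{1/2}\to\infty$ and $\delta_n n^{1/2}a_{i,n}/\xi_{i,n}\to 0$. For a concrete failure, take $n^{1/2}\eta_{i,n}=(n-k)^{1/4}$ and $n^{1/2}a_{i,n}/\xi_{i,n}=(n-k)^{3/4}$.

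The paper deals with exactly this issue by a preliminary case split on subsequences: the cases $n^{1/2}a_{i,n}/\xi_{i,n}\to 0$, $n^{1/2}\eta_{i,n}\to 0$, and $n^{1/2}(a_{i,n}/\xi_{i,n}-\eta_{i,n})\to\infty$ are disposed of directly (both infima tend to $0$, coincide up to $o(1)$, or tend to $1$, respectively), and only in the residual case does one have $a_{i,n}/(\xi_{i,n}\eta_{i,n})\leq C$, which is precisely what makes the Lipschitz argument go through. A second, related point: the paper does not pick an arbitrary near-minimizer $\theta^{(n)}$ but the explicit choice $\theta_{i,n}(\eps)=a_{i,n}(1+c(\eps)(n-k)^{-1/2})$. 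This matters because the Lipschitz bound for the square-root terms (via $|\sqrt{x}-\sqrt{y}|\leq |x-y|/(2\sqrt{z})$ with $z\sim(\xi_{i,n}\eta_{i,n})^2$) also produces factors involving $\theta_{i,n}$; the explicit choice guarantees $\theta_{i,n}\sim a_{i,n}$ and keeps the estimate in the correct branch $sa_{i,n}<\theta_{i,n}(\eps)$ for all $s$ in the concentration window, while simultaneously being provably close to the infimum. With an unspecified near-minimizer you would have to separately control its location and the branch of the coverage formula, which you have not done.
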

Note that the remark below Theorem~\ref{confequivH} applies for the
above theorem as well.

%%%%%%%%%%%%%%%%%%%%%%%%%%%%%%%%%%%%%%%%%%%%%%%%%%%%%%%%%%%%%%%%%%%%%%%%%%%%
%%%%%%%%%%%%%%%%%%%%%%%%%%%%%%%%%%%%%%%%%%%%%%%%%%%%%%%%%%%%%%%%%%%%%%%%%%%%

\subsubsection{Comparison of lengths}

When comparing lengths of confidence intervals in the unknown-variance
case in finite samples, Proposition~\ref{confunknownSfs} and the
inequalities in (\ref{confunknownHub}) and (\ref{confunknownASub}) in
the proofs of Theorems~\ref{confequivH} and \ref{confequivAS}
show that
\begin{equation}
\label{upperbound}
\inf_{\theta \in \R^k, \sigma \in \R_+}P_{n,\theta,\sigma}(\theta_i \in E_{i,n})
\leq  T_{n-k}(n^{1/2}a_{i,n}/\xi_{i,n}) - T_{n-k}(-n^{1/2}a_{i,n}/\xi_{i,n}) 
\end{equation}
holds for $E_{i,n} = [\tilde\theta_i - \hat\sigma
  a_{i,n},\tilde\theta_i + \hat\sigma a_{i,n}]$ based on any of the
three thresholding estimators in consideration. Since the coverage
probability of the standard interval $[\hat\theta_{LS,i} - \hat\sigma
  t_{i,n},\hat\theta_{LS,i} + \hat\sigma t_{i,n}]$ based on the
least-squares estimator is given by
$$
 T_{n-k}(n^{1/2}t_{i,n}/\xi_{i,n}) - T_{n-k}(-n^{1/2}t_{i,n}/\xi_{i,n}),
$$ 
we see that the lengths for the $(1-\alpha)$-intervals based on the
thresholding estimators are larger than the corresponding standard
$(1-\alpha)$-interval in finite samples in the unknown-variance case
also. Asymptotically, a similar discussion as in
Section~\ref{asympknown} applies. When the estimators are
conservatively tuned, the lengths of all intervals including the
standard one are of the same order $\xi_{i,n}/n^{1/2}$ (which can be
seen from the inequality above together with Propositions~14, 16, and
18). In the consistently tuned case, we also find the same behavior as
in the known-variance case in the sense that the lengths of the
intervals based on the thresholding estimators are larger by an order
of magnitude compared to the standard interval (which can be learnt
from Proposition~\ref{confunknowncrude} below using the case $d<1$).

%%%%%%%%%%%%%%%%%%%%%%%%%%%%%%%%%%%%%%%%%%%%%%%%%%%%%%%%%%%%%%%%%%%%%%%%%%%%
%%%%%%%%%%%%%%%%%%%%%%%%%%%%%%%%%%%%%%%%%%%%%%%%%%%%%%%%%%%%%%%%%%%%%%%%%%%%

\subsubsection{A simple confidence interval}

Just like in the known-variance case, we can derive a simple
confidence interval based on each of the estimators in the
unknown-variance case when the estimator is tuned to perform
consistent variable selection, in fact, this can be done independently
of whether the number of degrees of freedom $n-k$ remains bounded or
diverges, in particular independently of how fast it diverges in
relation to the tuning parameter. We consider intervals of the form
$F_{i,n} = [\tilde\theta_i - \hat\sigma
  d\xi_{i,n}\eta_{i,n},\tilde\theta_i + \hat\sigma
  d\xi_{i,n}\eta_{i,n}]$ where $d$ is non-negative.

\begin{proposition}
\label{confunknowncrude}
Suppose that for given $i \geq 1$ satisfying $i \leq k = k(n)$ for
large enough $n$ we have $\xi_{i,n}\eta_{i,n} \to 0$ and
$n^{1/2}\eta_{i,n} \to \infty$. Then
$$
\lim_{n \to \infty} \inf_{\theta \in \R^k,\sigma \in \R_+} 
P_{n,\theta,\sigma}(\theta_i \in F_{i,n}) = 1
$$
if $d > 1$, and 
$$
\lim_{n \to \infty} \inf_{\theta \in \R^k,\sigma \in \R_+} 
P_{n,\theta,\sigma}(\theta_i \in F_{i,n}) = 0
$$
if $d < 1$. 
\end{proposition}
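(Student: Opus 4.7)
The plan is to exploit the scale equivariance of the thresholding estimators---so that $P_{n,\theta,\sigma}(\theta_i \in F_{i,n}) = P_{n,\theta/\sigma,1}(\theta_i/\sigma \in F_{i,n})$ and the infimum over $(\theta,\sigma)$ reduces to one over $\theta$ with $\sigma=1$---and then to treat $d>1$ and $d<1$ separately, splitting further by estimator type where necessary.

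For $d>1$, I would apply the finite-sample lower bounds of Propositions~\ref{confunknownHfs}, \ref{confunknownSfs}, and \ref{confunknownASfs} with $a_{i,n}=d\,\xi_{i,n}\eta_{i,n}$. In each case the bound has the form $T_{n-k}(n^{1/2}\eta_{i,n}(d-1)) - T_{n-k}(\gamma_n)$ where $\gamma_n$ equals $-n^{1/2}\eta_{i,n}d$, $-n^{1/2}\eta_{i,n}(d+1)$, or $-n^{1/2}\eta_{i,n}\sqrt{d^2+1}$ for the three estimators, respectively. Since $d-1>0$ and $n^{1/2}\eta_{i,n}\to\infty$, the first argument tends to $+\infty$ while $\gamma_n\to-\infty$. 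The key auxiliary estimate is that $T_m(x)\to 1$ as $x\to\infty$ and $T_m(x)\to 0$ as $x\to-\infty$, uniformly in $m\geq 1$; this reduces to the inequality $T_m(x)\geq T_1(x)$ for $x\geq 0$, which follows from a direct comparison of the $t$-densities ($t_m$ crosses $t_1$ exactly once on $(0,\infty)$ since $t_m(0)>t_1(0)$ for $m>1$ while $t_m$ has lighter polynomial tails). Combined with the explicit Cauchy form of $T_1$ this yields the desired uniform limits, whence each lower bound tends to $1$ and, by the trivial upper bound $1$, so does the infimal coverage.

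For $d<1$, the soft-thresholding case is immediate from the exact identity in Proposition~\ref{confunknownSfs}: both arguments of the two $T_{n-k}$ terms tend to $-\infty$, and by the same uniform-in-$m$ estimate each tends to $0$. For hard- and adaptive soft-thresholding the lower bounds collapse to $0-0$ and give no information, so I would instead exhibit an explicit sequence along which coverage vanishes. Fix any $\zeta\in(d,1)$ and set $\theta_{n,i}=\zeta\,\xi_{i,n}\eta_{i,n}$ (with $\sigma=1$). Then $\hat\theta_{LS,i}/(\xi_{i,n}\eta_{i,n})=\zeta+o_p(1)$ because the normalized noise has standard deviation $1/(n^{1/2}\eta_{i,n})\to 0$, and in the regime $n-k\to\infty$ one has $\hat\sigma\to 1$ in probability; hence with probability tending to $1$ the estimator is thresholded to zero (since $\zeta<1$), giving $|\tilde\theta_i-\theta_{n,i}|/(\hat\sigma\,\xi_{i,n}\eta_{i,n})\to\zeta>d$, so $F_{i,n}$ fails to cover $\theta_{n,i}$. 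This forces $\inf_\theta P_{n,\theta,1}(\theta_i\in F_{i,n})\to 0$.

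The principal obstacle I anticipate is establishing the uniform-in-$m$ tail behavior of $T_m$ used in the $d>1$ case and in the soft-thresholding $d<1$ case; once the density-comparison sketched above is in place, these parts are essentially mechanical. A secondary delicate point is that the explicit $\theta^{(n)}$-argument for hard- and adaptive soft-thresholding with $d<1$ relies on $\hat\sigma/\sigma$ concentrating at $1$, which is available when $n-k\to\infty$; the complementary regime where $n-k$ stays bounded requires a more careful choice (for instance, combining the continuity of the limit law of $\hat\sigma/\sigma$ with a subsequence argument to make the probability of the event $\{\hat\sigma\in(\zeta,\zeta/d)\}$ arbitrarily close to its supremum over $\zeta$), and this is the step I expect to require the most care in a full write-up.
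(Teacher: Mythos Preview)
Your treatment of $d>1$ is correct but takes a different route from the paper. The paper argues via the asymptotic distributions in Propositions~\ref{lsdistHconsist}--\ref{lsdistASconsist}: along any subsequence one extracts a further subsequence with $n-k$ and $\zeta_{i,n}$ convergent, and then the limit law is supported on $[-1,1]\subset(-d,d)$, whence Portmanteau gives coverage $\to 1$. You instead plug $a_{i,n}=d\,\xi_{i,n}\eta_{i,n}$ into the finite-sample lower bounds of Propositions~\ref{confunknownHfs}, \ref{confunknownSfs}, \ref{confunknownASfs} and use the uniform-in-$m$ tail control $T_m(x)\ge T_1(x)$ for $x\ge 0$. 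This is a clean alternative that avoids any subsequence machinery; the density-crossing argument you sketch for $T_m\ge T_1$ is standard and correct.

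For $d<1$ your soft-thresholding argument via Proposition~\ref{confunknownSfs} is correct, and your explicit sequence $\theta_{i,n}=\zeta\,\xi_{i,n}\eta_{i,n}$ with $\zeta\in(d,1)$ handles hard- and adaptive soft-thresholding when $n-k\to\infty$; this parallels the paper's appeal to parts (b) of Propositions~\ref{lsdistHconsist} and \ref{lsdistASconsist}.

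The gap you flag in the regime where $n-k$ stays bounded is genuine, and your proposed fix cannot be made to work. The paper's proof asserts that Propositions~\ref{lsdistHconsist}--\ref{lsdistASconsist} always furnish a sequence whose limit is concentrated at an endpoint of $[-1,1]$, but inspection of parts (a) of Propositions~\ref{lsdistHconsist} and \ref{lsdistASconsist} shows that no such limit occurs when $n-k\to m<\infty$. More directly: for hard-thresholding with $\sigma=1$ and $\theta_{i,n}=\zeta\,\xi_{i,n}\eta_{i,n}$, the non-coverage probability converges to $P\big(|\zeta|\le \hat\sigma < |\zeta|/d\big)$ (thresholding to zero occurs asymptotically iff $\hat\sigma\ge|\zeta|$, and then non-coverage iff $\hat\sigma<|\zeta|/d$; on the complementary event coverage holds with probability tending to one). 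Since $\hat\sigma$ has a fixed density with full support on $(0,\infty)$ when $n-k=m$, one has $\sup_{\zeta>0}P\big(|\zeta|\le\hat\sigma<|\zeta|/d\big)<1$, so the infimal coverage has a strictly positive limit. An analogous calculation applies to adaptive soft-thresholding. Thus the $d<1$ conclusion, as stated without any assumption on $n-k$, fails for these two estimators; both your write-up and the paper's proof are incomplete here, and the statement itself appears to require the additional hypothesis $n-k\to\infty$ for hard- and adaptive soft-thresholding.
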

\begin{remark}
When $d=1$, we cannot make a comprehensive statement in the
unknown-variance case. However, Proposition~\ref{confunknownSfs} shows
that the limit always equals 1/2 for the soft-thresholding estimator,
and Theorem~\ref{confequivH} and \ref{confequivAS} provide the same
limit for hard-thresholding and adaptive soft-thresholding in case
$n^{1/2}\eta_{i,n}/(n-k)^{1/2} \to 0$.
\end{remark}
%Finally, use
%Propositions~\ref{confunknownHfs}, \ref{confunknownSfs}, and
%\ref{confunknownASfs} to show the result for $d=1/2$.

\subsubsection{Numerical considerations}

To get an idea of the meaning of the lower bounds in
Propositions~\ref{confunknownHfs} and \ref{confunknownASfs} for hard-
and adaptive soft-thresholding and the upper bound in
(\ref{upperbound}), we provide some numerical considerations. (Note
that for soft-thresholding as well as for the known-variance case in
general, the corresponding results are exact in finite samples, making
a numerical study unnecessary.) We use $1-\alpha = 0.95$ and set to $n
= 40$, $k = 35$, $\xi_{i,n} = 1$, and $\sigma^2 = 1$ (just like in
Figures~\ref{plotpdfH}-\ref{plotpdfAS}). Two different values for the
threshold were chosen, $\eta_{i,n}=0.05$ and $\eta_{i,n}=0.5$. For
each of the two estimators, we use the corresponding lower bound from
Proposition~\ref{confunknownHfs} and \ref{confunknownASfs},
respectively, to compute the length $a_{i,n}$ for a confidence
interval of the form $E_{i,n} = [\tilde\theta_i - \hat\sigma a_{i,n},
  \tilde\theta_i + \hat\sigma a_{i,n}]$ (with $\tilde\theta_i$
denoting either $\tilde\theta_{H,i}$ or $\tilde\theta_{AS,i}$) with
minimal coverage of \emph{at least} 0.95. For the particular length
$a_{i,n}$, we also list the upper bound for the minimal coverage
probability from (\ref{upperbound}). The actual minimal coverage
probability listed below in Table~\ref{sim} is computed by numerically
minimizing the expression $P_{n,\theta,\sigma}(\theta_i \in
[\tilde\theta_i - \hat\sigma a_{i,n}, \tilde\theta_i + \hat\sigma
  a_{i,n}]$) in $\theta_i$ for the given length $a_{i,n}$ (see also
Figure~\ref{pnplots} for a plot of this coverage probability as a
function of $\theta_i$).

\medskip

\begin{table}[htp]
\begin{tabular}{l|cc|c|cc|cc}
& \multicolumn{2}{c|}{length {\small $a_{i,n}$}} & \multirow{2}{*}{lower bound}
& \multicolumn{2}{c|}{actual min.\ coverage} & \multicolumn{2}{c}{upper bound}
\\[0.5ex] 
& {\small $\eta_{i,n}\!=\!0.05$} & {\small $\eta_{i,n}\!=\!0.5$} & & 
{\small $\eta_{i,n}\!=\!0.05$} & {\small $\eta_{i,n}\!=\!0.5$} & 
{\small $\eta_{i,n}\!=\!0.05$} & {\small $\eta_{i,n}\!=\!0.5$} \\[0.5ex] 
\hline 
\hline 
\down{$\hat\theta_{LS,i}$} & \multicolumn{2}{c|}{\down{0.406}} & \down{--} & 
\multicolumn{2}{c|}{\down{0.95}} & \multicolumn{2}{c}{\down{--}} \\[1ex] 
\hline 
\down{$\tilde\theta_{H,i}$} & \down{0.434} & \down{0.823} & \down{0.95} & 
\down{0.9592} & \down{0.9893} & \down{0.9595} & \down{0.9965} \\[1ex] 
\hline 
\down{$\tilde\theta_{AS,i}$} & \down{0.432} & \down{0.820} & \down{0.95} & 
\down{0.9574} & \down{0.9844} & \down{0.9591} & \down{0.9965} \\[1ex]
\hline
\end{tabular}
\caption{\label{sim} Results for the 95\%-confidence intervals based
  on the lower bounds. The parameters were set to $n=40$, $k=35$,
  $\xi_{i,n} = 1$, and $\sigma^2 = 1$. The threshold was
  $\eta_{i,n}=0.05$ and $\eta_{i,n}=0.5$.}
\end{table}
%%%%%%%%%%%%%%%%%%%%%%%%%%%%%%%%%%%%%%%%%%%%%%%%%%%%%%%%%%%%%%%%%%%%%%%%%%%
We see that the threshold $\eta_{i,n}$ plays an important role: a
larger threshold yields larger confidence intervals and also widens
the range between lower and upper bound with the actual minimal
coverage probabilities being closer to the upper than the lower
bound. In order to get an idea of the ``conservativeness'' introduced
by using the upper bounds we found that for the larger threshold
$\eta_{i,n}=0.5$, the length $a_{i,n}=0.67$ approximately gave the
desired minimal coverage probability of $0.95$ for
hard-thresholding. This was computed by decreasing the conservative
value of $a_{i,n} = 0.823$ until the numerically computed minimum was
at roughly at $0.95$. Table~\ref{sim} also shows that (for the values
used) the intervals based hard-thresholding are slightly larger than
the ones based on adaptive soft-thresholding, which is in line with
the theoretical findings for the known-variance case.

\medskip

\begin{figure}[htb]
\begin{center}
\includegraphics[width=6cm]{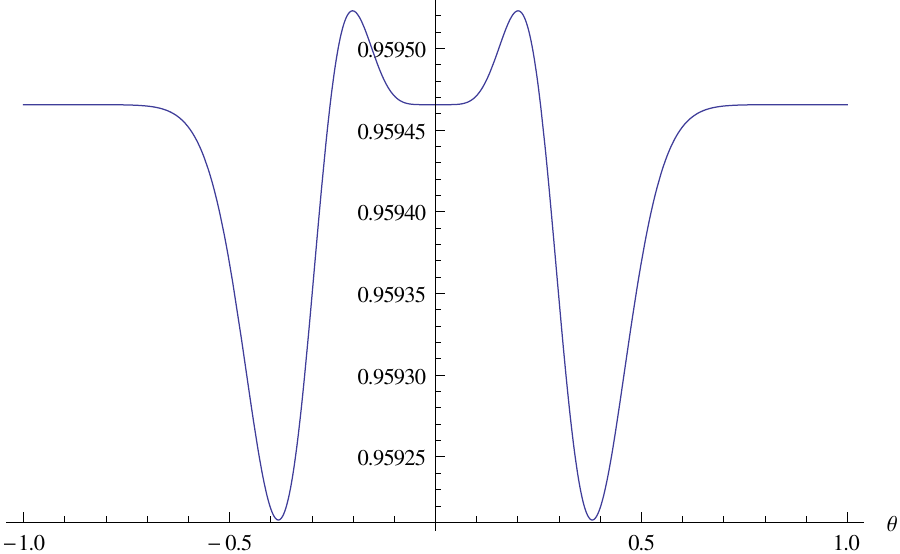} $\;\;\;$
\includegraphics[width=6cm]{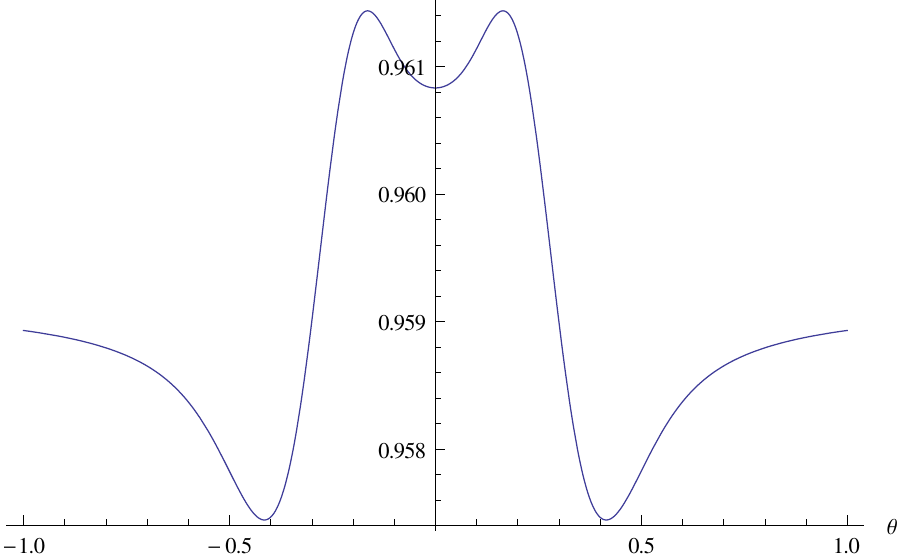}
\caption{\label{pnplots} Plots of coverage probabilities
  $P_{n,\theta,\sigma}(\theta_i \in [\tilde\theta_i - \hat\sigma
    a_{i,n}, \tilde\theta_i + \hat\sigma a_{i,n}]$ as a function of
  $\theta_i$ for hard-thresholding (left) and adaptive
  soft-thresholding (right), where $a_{i,n}$ was chosen for each
  estimator so that the upper bound in
  Proposition~\ref{confunknownHfs} and \ref{confunknownASfs} equals
  $1-\alpha=0.95$. For both plots: $n=40$, $k=35$, $\eta_{i,n}=0.05$,
  $\xi_{i,n}=1$, $\sigma^2=1$.}
\end{center}
\end{figure}

\vspace{-0.3cm}

%%%%%%%%%%%%%%%%%%%%%%%%%%%%%%%%%%%%%%%%%%%%%%%%%%%%%%%%%%%%%%%%%%%%%%%%%%%%
%%%%%%%%%%%%%%%%%%%%%%%%%%%%%%%%%%%%%%%%%%%%%%%%%%%%%%%%%%%%%%%%%%%%%%%%%%%%
%%%%%%%%%%%%%%%%%%%%%%%%%%%%%%%%%%%%%%%%%%%%%%%%%%%%%%%%%%%%%%%%%%%%%%%%%%%%
%%%%%%%%%%%%%%%%%%%%%%%%%%%%%%%%%%%%%%%%%%%%%%%%%%%%%%%%%%%%%%%%%%%%%%%%%%%%
\section{Summary}
\label{summ}
%%%%%%%%%%%%%%%%%%%%%%%%%%%%%%%%%%%%%%%%%%%%%%%%%%%%%%%%%%%%%%%%%%%%%%%%%%%%
%%%%%%%%%%%%%%%%%%%%%%%%%%%%%%%%%%%%%%%%%%%%%%%%%%%%%%%%%%%%%%%%%%%%%%%%%%%%
%%%%%%%%%%%%%%%%%%%%%%%%%%%%%%%%%%%%%%%%%%%%%%%%%%%%%%%%%%%%%%%%%%%%%%%%%%%%
%%%%%%%%%%%%%%%%%%%%%%%%%%%%%%%%%%%%%%%%%%%%%%%%%%%%%%%%%%%%%%%%%%%%%%%%%%%%

We considered distributional properties of thresholding estimators
within a normal linear regression model with a potentially diverging
number of parameters where the error variance may be unknown. Aside
from looking at finite-sample as well as large-sample distributions
(within a moving-parameter asymptotic framework) for the estimators,
we constructed valid confidence sets based on these estimators. We
provided intervals that exactly have the prescribed coverage as their
minimal coverage probability in finite samples for the case when the
error variance is known, and conservative intervals where the minimal
coverage probability can never fall below the prescribed value for the
case of unknown error variance. Some numerical considerations were
carried out to illustrate how conservative these lower bounds are and
what the approximation entails for the lengths of the so-constructed
intervals.

We showed that the lengths of the intervals based on the thresholding
estimators are always larger compared to the corresponding standard
intervals in finite samples. Asymptotically, in case of conservative
variable selection, the lengths of all intervals including the
standard one are of the same order. When the estimators are tuned to
perform consistent variable selection, the lengths of the intervals
based on the thresholding estimators are in fact larger by an order of
magnitude compared to the standard one.

%%%%%%%%%%%%%%%%%%%%%%%%%%%%%%%%%%%%%%%%%%%%%%%%%%%%%%%%%%%%%%%%%%%%%%%%%%%%
%%%%%%%%%%%%%%%%%%%%%%%%%%%%%%%%%%%%%%%%%%%%%%%%%%%%%%%%%%%%%%%%%%%%%%%%%%%%
%%%%%%%%%%%%%%%%%%%%%%%%%%%%%%%%%%%%%%%%%%%%%%%%%%%%%%%%%%%%%%%%%%%%%%%%%%%%
%%%%%%%%%%%%%%%%%%%%%%%%%%%%%%%%%%%%%%%%%%%%%%%%%%%%%%%%%%%%%%%%%%%%%%%%%%%%
\section{Acknowledgments}
\label{ack}

The author wishes to thank anonymous referees for very valuable
comments and acknowledges support from DFG grant FOR 916.

%%%%%%%%%%%%%%%%%%%%%%%%%%%%%%%%%%%%%%%%%%%%%%%%%%%%%%%%%%%%%%%%%%%%%%%%%%%%
%%%%%%%%%%%%%%%%%%%%%%%%%%%%%%%%%%%%%%%%%%%%%%%%%%%%%%%%%%%%%%%%%%%%%%%%%%%%
%%%%%%%%%%%%%%%%%%%%%%%%%%%%%%%%%%%%%%%%%%%%%%%%%%%%%%%%%%%%%%%%%%%%%%%%%%%%
%%%%%%%%%%%%%%%%%%%%%%%%%%%%%%%%%%%%%%%%%%%%%%%%%%%%%%%%%%%%%%%%%%%%%%%%%%%%

\newpage

%%%%%%%%%%%%%%%%%%%%%%%%%%%%%%%%%%%%%%%%%%%%%%%%%%%%%%%%%%%%%%%%%%%%%%%%%%%%
%%%%%%%%%%%%%%%%%%%%%%%%%%%%%%%%%%%%%%%%%%%%%%%%%%%%%%%%%%%%%%%%%%%%%%%%%%%%
%%%%%%%%%%%%%%%%%%%%%%%%%%%%%%%%%%%%%%%%%%%%%%%%%%%%%%%%%%%%%%%%%%%%%%%%%%%%
%%%%%%%%%%%%%%%%%%%%%%%%%%%%%%%%%%%%%%%%%%%%%%%%%%%%%%%%%%%%%%%%%%%%%%%%%%%%
\begin{appendix}
\section{Overview of Assumptions and Results}
\label{table}
%%%%%%%%%%%%%%%%%%%%%%%%%%%%%%%%%%%%%%%%%%%%%%%%%%%%%%%%%%%%%%%%%%%%%%%%%%%%
%%%%%%%%%%%%%%%%%%%%%%%%%%%%%%%%%%%%%%%%%%%%%%%%%%%%%%%%%%%%%%%%%%%%%%%%%%%%
%%%%%%%%%%%%%%%%%%%%%%%%%%%%%%%%%%%%%%%%%%%%%%%%%%%%%%%%%%%%%%%%%%%%%%%%%%%%
%%%%%%%%%%%%%%%%%%%%%%%%%%%%%%%%%%%%%%%%%%%%%%%%%%%%%%%%%%%%%%%%%%%%%%%%%%%%

A quick overview of the propositions and theorems derived in this
paper is given below in Table~\ref{overview}. The first chart lists
all finite-sample results. All of them except
Theorem~\ref{confknownfs} concern the estimators for the case of
unknown error variance. The second chart depicts asymptotic results
under a conservatively tuned regime for the two cases of the
asymptotic behavior of the number of degrees of freedom, namely, when
$n-k$ eventually converges to a constant and when $n-k$ diverges. All
results in this chart are for unknown error variance. Finally, the
asymptotic results under a consistent variable selection framework are
specified in the last chart. Proposition~\ref{confknowncrude} concerns
the known-variance case (for which the asymptotic behavior of $n-k$ is
irrelevant), whereas all other results listed are for the
unknown-variance case. Here, we distinguish three situations for the
large-sample behavior of $n-k$: $n-k$ converging to a finite value,
$n-k$ tending to infinity, and the subcase where $n-k$ diverges ``fast
enough'' in relation to the tuning parameter $\eta_{i,n}$ in the sense
that $n^{1/2}\eta_{i,n}/\sqrt{n-k} \to 0$. To contrast the main
contributions of this paper concerning the confidence intervals with the
auxiliary results on the distributions of the estimators, the former
ones are listed in boldface.

\medskip

\begin{table}[htp]

%%%%%%%%%%%%%%%%%%%%%%%%%%%%%%%%%%%%%%%%%%%%%%%%%%%%%%%%%%%%%%%%%%%%%%%%%%%%
\begin{tabular}{|c|c|}
\hline
\multicolumn{2}{|c|}{\ddown{\bf finite-sample results}} \\[2ex]
\hline
\hline
\down{distributions} & \down{Prop. \ref{fsdistH}, \ref{fsdistS}, 
\ref{fsdistAS}} \\[1ex]
\hline
\multirow{2}{*}{\ddown{confidence intervals}} & 
\ddown{{\bf Ther.\ \ref{confknownfs}} (known variance)} \\[1ex]
&  \down{\bf  Prop.\ \ref{confunknownHfs}, \ref{confunknownSfs}, 
\ref{confunknownASfs}} \\[1ex]
\hline
\end{tabular}
%%%%%%%%%%%%%%%%%%%%%%%%%%%%%%%%%%%%%%%%%%%%%%%%%%%%%%%%%%%%%%%%%%%%%%%%%%%%

\bigskip
\medskip

%%%%%%%%%%%%%%%%%%%%%%%%%%%%%%%%%%%%%%%%%%%%%%%%%%%%%%%%%%%%%%%%%%%%%%%%%%%%
\begin{tabular}{|c|c|c|}
\hline
\multicolumn{3}{|c|}{\ddown{\bf asymptotic results: conservative tuning}}
\\[2ex]
\hline
& \down{$n-k = m$} & \down{$n-k \to \infty$} \\[1ex]
\hline
\hline
\down{distributions} & \down{Prop.\ \ref{lsdistHconserv}, \ref{lsdistSconserv},
\ref{lsdistASconserv}} & 
\down{--} \\[1ex]
\hline
\down{confidence intervals} & \down{--} & \down{\bf Ther.\ 
\ref{confequivH}, \ref{confequivS}, \ref{confequivAS}} \\[1ex]
\hline
\end{tabular}
%%%%%%%%%%%%%%%%%%%%%%%%%%%%%%%%%%%%%%%%%%%%%%%%%%%%%%%%%%%%%%%%%%%%%%%%%%%%

\bigskip
\medskip

%%%%%%%%%%%%%%%%%%%%%%%%%%%%%%%%%%%%%%%%%%%%%%%%%%%%%%%%%%%%%%%%%%%%%%%%%%%%
\begin{tabular}{|c|c|c|c|}
\hline
\multicolumn{4}{|c|}{\ddown{\bf asymptotic results: consistent tuning}} 
\\[2ex]
\hline
 & \multirow{2}{*}{\ddown{$n-k = m$}} & 
\multirow{2}{*}{\ddown{$n-k \to \infty$}} & 
\ddown{$\;\;  n^{1/2}\eta_{i,n}/\sqrt{n-k} \to 0 \;\;$} \\[1ex]
& & & \down{($n-k \to \infty$ ``fast enough'')} \\[1ex] 
\hline
\hline 
\down{distributions} & \down{Prop.\ \ref{lsdistHconsist}(a), 
\ref{lsdistSconsist}(a), \ref{lsdistASconsist}(a)} & 
\multicolumn{2}{c|}{\down{Prop.\ \ref{lsdistHconsist}(b), 
\ref{lsdistSconsist}(b), \ref{lsdistASconsist}(b)}} \\[1ex]
\hline
\multirow{2}{*}{\down{confidence intervals}} 
& \multicolumn{2}{c|}{} & \down{\bf Ther.\ 
\ref{confequivH}, \ref{confequivS}, \ref{confequivAS}} \\[1ex]
\cline{2-4}
& \multicolumn{3}{c|}{\down{{\bf Prop.\ \ref{confknowncrude}} 
(known variance), {\bf Prop.\ \ref{confunknowncrude}}}} \\[1ex]
\hline
\end{tabular}
%%%%%%%%%%%%%%%%%%%%%%%%%%%%%%%%%%%%%%%%%%%%%%%%%%%%%%%%%%%%%%%%%%%%%%%%%%%%

\smallskip

\caption{\label{overview} Overview of assumptions and results.}
\end{table}

%%%%%%%%%%%%%%%%%%%%%%%%%%%%%%%%%%%%%%%%%%%%%%%%%%%%%%%%%%%%%%%%%%%%%%%%%%%%
%%%%%%%%%%%%%%%%%%%%%%%%%%%%%%%%%%%%%%%%%%%%%%%%%%%%%%%%%%%%%%%%%%%%%%%%%%%%
%%%%%%%%%%%%%%%%%%%%%%%%%%%%%%%%%%%%%%%%%%%%%%%%%%%%%%%%%%%%%%%%%%%%%%%%%%%%
\section{Proofs}
\label{proofs}
%%%%%%%%%%%%%%%%%%%%%%%%%%%%%%%%%%%%%%%%%%%%%%%%%%%%%%%%%%%%%%%%%%%%%%%%%%%%
%%%%%%%%%%%%%%%%%%%%%%%%%%%%%%%%%%%%%%%%%%%%%%%%%%%%%%%%%%%%%%%%%%%%%%%%%%%%
%%%%%%%%%%%%%%%%%%%%%%%%%%%%%%%%%%%%%%%%%%%%%%%%%%%%%%%%%%%%%%%%%%%%%%%%%%%%

%%%%%%%%%%%%%%%%%%%%%%%%%%%%%%%%%%%%%%%%%%%%%%%%%%%%%%%%%%%%%%%%%%%%%%%%%%%%
%%%%%%%%%%%%%%%%%%%%%%%%%%%%%%%%%%%%%%%%%%%%%%%%%%%%%%%%%%%%%%%%%%%%%%%%%%%%
\subsection{Proofs for Section~\ref{fsdist}}
%%%%%%%%%%%%%%%%%%%%%%%%%%%%%%%%%%%%%%%%%%%%%%%%%%%%%%%%%%%%%%%%%%%%%%%%%%%%
%%%%%%%%%%%%%%%%%%%%%%%%%%%%%%%%%%%%%%%%%%%%%%%%%%%%%%%%%%%%%%%%%%%%%%%%%%%%

\begin{proof}[Proof of Proposition~\ref{fsdistH}]
In the following, we denote the distribution function of
$\sigma^{-1}\alpha_{i,n} (\hat\theta_{H,i} - \theta_i)$, the estimator in
the known-variance case, by $H^i_{H,\eta_{i,n},n,\theta,\sigma}$ which was
already derived in \cite{poesch11}. We have
\begin{eqnarray*}
\tilde H_{H,n,\theta ,\sigma }^i(x) &=&
\int_0^\infty P_{n,\theta,\sigma} 
\left(\sigma^{-1}\alpha_{i,n}(\tilde\theta_{H,i}-\theta_i) \leq
x \hat\sigma/\sigma \, \middle| \,
\hat\sigma/\sigma = s\right) \rho_{n-k}(s) \, ds \\
 &=& \int_0^\infty H^i_{H,s\eta_{i,n},n,\theta,\sigma}(xs)\,\rho_{n-k}(s)\,ds,
\end{eqnarray*}
where we have used independence of $\hat\sigma$ and
$\hat\theta_{LS,i}$ allowing us to replace $\hat{\sigma}$ by $s\sigma$
in the relevant formulas, cf.~\citet[p.~110]{leepoe03}. Replacing
$\eta_{i,n}$ by $s\eta_{i,n}$ and $x$ by $xs$ in (7) in
\cite{poesch11} into the above equation gives (\ref{fscdfH}). To show
(\ref{fspdfH0}) and (\ref{fspdfHnot0}), observe first that for each
fixed $s$ we can regard $H^i_{H,s\eta_{i,n},n,\theta,\sigma}(xs)$ as
$H^i_{H,s\eta_{i,n},n,\theta,\sigma}(x)$, but with the scaling factor
$\alpha_{i,n}$ replaced by $\alpha_{i,n}/s$. Further rewriting this
cdf as an integral of the measure
$dH^i_{H,s\eta_{i,n},n,\theta,\sigma}$ from (8) in \cite{poesch11},
applying Fubini's theorem and performing an elementary calculation
yields
\begin{align*}
&\tilde H^i_{H,n,\theta,\sigma}(x) \; = \; \int_0^\infty 
\ind\{xs/\alpha_{i,n} + \theta_i/\sigma \geq 0\} \\
& \times 
\left[ \Phi\big(n^{1/2} (-\theta_i/(\sigma \xi_{i,n}) + s\eta_{i,n})\big)
- \Phi\big(n^{1/2} (-\theta_i/(\sigma \xi_{i,n}) - s\eta_{i,n})\big) \right]
\rho_{n-k}(s) \, ds \\
& + \int_{-\infty}^x \int_0^\infty (n^{1/2}s/(\alpha_{i,n}\xi_{i,n})) 
\phi\big(n^{1/2}ws/(\alpha_{i,n}\xi_{i,n})\big)
\ind\left(|ws/\alpha_{i,n} + \theta_i/\sigma| > \xi_{i,n}s\eta_{i,n}\right) \\
& \times \rho_{n-k}(s) \, ds dw
\end{align*}
for any $\theta_i \in \R$. The second term in the above display
immediately gives the second term in (\ref{fspdfH0}) and
(\ref{fspdfHnot0}). Also, the first term in (\ref{fspdfH0}) can be
derived in a straightforward manner since the indicator function in
the first integral of the above display simply becomes $\ind\{x \geq
0\}$ for $\theta_i = 0$. Finally, to obtain the first term in
(\ref{fspdfHnot0}), ``move'' the corresponding indicator function in
the above display into the limits of the integral and differentiate
with respect to to $x$ applying Leibniz's integral rule.
\end{proof}

%%%%%%%%%%%%%%%%%%%%%%%%%%%%%%%%%%%%%%%%%%%%%%%%%%%%%%%%%%%%%%%%%%%%%%%%%%%%
\smallskip
%%%%%%%%%%%%%%%%%%%%%%%%%%%%%%%%%%%%%%%%%%%%%%%%%%%%%%%%%%%%%%%%%%%%%%%%%%%%

\begin{proof}[Proof of Proposition~\ref{fsdistS}]
The proof proceeds in a similar manner as in the proof of
Proposition~\ref{fsdistH}. Let $H^i_{S,\eta_{i,n},n,\theta,\sigma}$
denote the distribution function of $\sigma^{-1}\alpha_{i,n}
(\hat\theta_{S,i} - \theta_i)$, the estimator in the known-variance
case, which was derived in \cite{poesch11}. We have
\begin{eqnarray*}
\tilde H_{S,n,\theta ,\sigma }^i(x) &=&
\int_0^\infty P_{n,\theta,\sigma} 
\left(\sigma^{-1}\alpha_{i,n}(\tilde\theta_{S,i}-\theta_i) \leq
x \hat\sigma/\sigma  \, \middle| \,
\hat\sigma/\sigma = s \right) \rho_{n-k}(s) \, ds \\
 &=& \int_0^\infty H^i_{S,s\eta_{i,n},n,\theta,\sigma}(xs)\,\rho_{n-k}(s)\,ds,
\end{eqnarray*}
where we have used independence of $\hat\sigma$ and
$\hat\theta_{LS,i}$ allowing us to replace $\hat{\sigma}$ by $s\sigma$
in the relevant formulas, cf.~\citet[p.~110]{leepoe03}. Replacing
$\eta_{i,n}$ by $s\eta_{i,n}$ and $x$ by $xs$ in (9) in
\cite{poesch11} into the above equation gives (\ref{fscdfS}). To show
(\ref{fspdfS0}) and (\ref{fspdfSnot0}), observe first that for each
fixed $s$ we can regard $H^i_{S,s\eta_{i,n},n,\theta,\sigma}(xs)$ as
$H^i_{S,s\eta_{i,n},n,\theta,\sigma}(x)$, but with the scaling factor
$\alpha_{i,n}$ replaced by $\alpha_{i,n}/s$. Further rewriting this
cdf as an integral of the measure
$dH^i_{S,s\eta_{i,n},n,\theta,\sigma}$ from (10) in \cite{poesch11},
applying Fubini's theorem and performing an elementary calculation
yields
\begin{align*}
&\tilde H^i_{S,n,\theta,\sigma}(x) \; = \; \int_0^\infty 
\ind\{xs/\alpha_{i,n} + \theta_i/\sigma \geq 0\} \\
& \times 
\left[ \Phi\big(n^{1/2} (-\theta_i/(\sigma \xi_{i,n}) +s\eta_{i,n})\big)
- \Phi\big(n^{1/2} (-\theta_i/(\sigma \xi_{i,n}) - s\eta_{i,n})\big) \right]
\rho_{n-k}(s) \, ds \\
& + \int_{-\infty}^x \int_0^\infty (n^{1/2}s/(\alpha_{i,n}\xi_{i,n}))  
\left[ \phi\big(n^{1/2}(ws/(\alpha_{i,n}\xi_{i,n}) + s\eta_{i,n})\big)
\ind\left(ws/\alpha_{i,n} + \theta_i/\sigma > 0\right)\right. \\
& +\left.\phi\big(n^{1/2}(ws/(\alpha_{i,n}\xi_{i,n}) - s\eta_{i,n})\big)
\ind\left(ws/\alpha_{i,n} + \theta_i/\sigma < 0\right) \right]
\rho_{n-k}(s) \, ds dw
\end{align*}
for any $\theta_i \in \R$. The second term in the above display
immediately gives the second term in (\ref{fspdfS0}) and
(\ref{fspdfSnot0}). Also, the first term in (\ref{fspdfS0}) can be
derived in a straightforward manner since the indicator function in
the first integral of the above display simply becomes $\ind\{x \geq
0\}$ for $\theta_i = 0$. Finally, to obtain the first term in
(\ref{fspdfSnot0}), ``move'' the corresponding indicator function in
the above display into the limits of the integral and differentiate
with respect to to $x$ applying Leibniz's integral rule.
\end{proof}

%%%%%%%%%%%%%%%%%%%%%%%%%%%%%%%%%%%%%%%%%%%%%%%%%%%%%%%%%%%%%%%%%%%%%%%%%%%%
\smallskip
%%%%%%%%%%%%%%%%%%%%%%%%%%%%%%%%%%%%%%%%%%%%%%%%%%%%%%%%%%%%%%%%%%%%%%%%%%%%

\begin{proof}[Proof of Proposition~\ref{fsdistAS}]
The proof proceeds in a similar manner as in the proof of
Proposition~\ref{fsdistH}. Let $H^i_{AS,\eta_{i,n},n,\theta,\sigma}$
denote the distribution function of $\sigma^{-1}\alpha_{i,n}
(\hat\theta_{AS,i} - \theta_i)$, the estimator in the known-variance
case, which was derived in \cite{poesch11}. We have
\ref{fsdistS} that
\begin{eqnarray*}
\tilde H_{AS,n,\theta ,\sigma }^i(x) &=&
\int_0^\infty P_{n,\theta,\sigma} 
\left(\sigma^{-1}\alpha_{i,n}(\tilde\theta_{AS,i}-\theta_i) \leq
x \hat\sigma/\sigma \mid
\hat\sigma = s\sigma \right) \rho_{n-k}(s) \, ds \\
 &=& \int_0^\infty H^i_{AS,s\eta_{i,n},n,\theta,\sigma}(xs)\,\rho_{n-k}(s)\,ds,
\end{eqnarray*}
where we have used independence of $\hat\sigma$ and
$\hat\theta_{LS,i}$ allowing us to replace $\hat{\sigma}$ by $s\sigma$
in the relevant formulas, cf.~\citet[p.~110]{leepoe03}. Replacing
$\eta_{i,n}$ by $s\eta_{i,n}$ and $x$ by $xs$ in (11) in
\cite{poesch11} into the above equation gives (\ref{fscdfAS}). To show
(\ref{fspdfAS0}) and (\ref{fspdfASnot0}), observe first that for each
fixed $s$ we can write $H^i_{AS,s\eta_{i,n},n,\theta,\sigma}(xs)$ as
$H^i_{AS,s\eta_{i,n},n,\theta,\sigma}(x)$, but with the scaling factor
$\alpha_{i,n}$ replaced by $\alpha_{i,n}/s$. Further rewriting this
cdf as an integral of the measure
$dH^i_{AS,s\eta_{i,n},n,\theta,\sigma}$ from Proposition~21 in
\cite{poesch11}, applying Fubini's theorem and performing an
elementary calculation yields
\begin{align*}
&\tilde H^i_{AS,n,\theta,\sigma}(x) \; = \; \int_0^\infty 
\ind\{xs/\alpha_{i,n} + \theta_i/\sigma \geq 0\} \\
& \times 
\left[ \Phi\big(n^{1/2} (-\theta_i/(\sigma \xi_{i,n}) + s\eta_{i,n})\big)
- \Phi\big(n^{1/2} (-\theta_i/(\sigma \xi_{i,n}) - s\eta_{i,n})\big) \right]
\rho_{n-k}(s) \, ds \\
& + \int_{-\infty}^x \int_0^\infty 0.5 (n^{1/2}s/(\alpha_{i,n}\xi_{i,n})) 
\left[\phi\big(z_{n,\theta,\sigma}^{(2)}(ws,s\eta_{i,n})\big)
(1 + t_{n,\theta,\sigma}(ws,s\eta_{i,n})) 
\ind\{ws/\alpha_{i,n} + \theta_i/\sigma > 0\} \right. \\
& + \phi\big(z_{n,\theta,\sigma}^{(1)}(ws,s\eta_{i,n})\big)
(1 - t_{n,\theta,\sigma}(ws,s\eta_{i,n})) 
\ind\{ws/\alpha_{i,n} + \theta_i/\sigma < 0\} \left.\right] \rho_{n-k}(s) \, ds \, dw
\end{align*}
for any $\theta_i \in \R$. The second term in the above display
immediately gives the second term in (\ref{fspdfAS0}) and
(\ref{fspdfASnot0}). Also, the first term in (\ref{fspdfAS0}) can also
be derived in a straightforward manner since the indicator function in
the first integral of the above display simply becomes $\ind\{x \geq
0\}$ for $\theta_i = 0$. Finally, to obtain the first term in
(\ref{fspdfASnot0}), ``move'' the corresponding indicator function in
the above display into the limits of the integral and differentiate
with respect to to $x$ applying Leibniz's integral rule.
\end{proof}

%%%%%%%%%%%%%%%%%%%%%%%%%%%%%%%%%%%%%%%%%%%%%%%%%%%%%%%%%%%%%%%%%%%%%%%%%%%%
%%%%%%%%%%%%%%%%%%%%%%%%%%%%%%%%%%%%%%%%%%%%%%%%%%%%%%%%%%%%%%%%%%%%%%%%%%%%
\subsection{Proofs for Section~\ref{lsdist}}
%%%%%%%%%%%%%%%%%%%%%%%%%%%%%%%%%%%%%%%%%%%%%%%%%%%%%%%%%%%%%%%%%%%%%%%%%%%%
%%%%%%%%%%%%%%%%%%%%%%%%%%%%%%%%%%%%%%%%%%%%%%%%%%%%%%%%%%%%%%%%%%%%%%%%%%%%

\begin{proof}[Proof of Proposition~\ref{lsdistHconserv}]
Note that for fixed $x$ the expression inside the square brackets in
(\ref{fscdfH}) converges to the expression inside the square brackets
in (\ref{lscdfHnspm}) for Lebesgue-almost all $s \in
(0,\infty)$. Since $n-k = m$ eventually, the dominated convergence
theorem proves (\ref{lscdfHnspm}). To conclude the expressions for the
corresponding measure, note that the limit distribution in
(\ref{lscdfHnspm}) is the same as the finite-sample distribution in
(\ref{fscdfH}) with $n^{1/2}\theta_{i,n}/(\sigma_n\xi_{i,n})$ and
$n^{1/2}\eta_{i,n}$ having settled down to their limiting values
$\nu_i$ and $e_i$, respectively, so that the formulas in
(\ref{fspdfH0}) and (\ref{fspdfHnot0}) can be used.
\end{proof}

%%%%%%%%%%%%%%%%%%%%%%%%%%%%%%%%%%%%%%%%%%%%%%%%%%%%%%%%%%%%%%%%%%%%%%%%%%%%
\smallskip
%%%%%%%%%%%%%%%%%%%%%%%%%%%%%%%%%%%%%%%%%%%%%%%%%%%%%%%%%%%%%%%%%%%%%%%%%%%%

\begin{proof}[Proof of Proposition~\ref{lsdistSconserv}]
The proof proceeds in a similar manner as the proof of
Proposition~\ref{lsdistHconserv}. Note that for fixed $x$ the
expression inside the square brackets in (\ref{fscdfS}) converges to
the expression inside the square brackets in (\ref{lscdfSnspm}) for
Lebesgue-almost all $s \in (0,\infty)$. Since $n-k = m$ eventually,
the dominated convergence theorem proves (\ref{lscdfSnspm}). To
conclude the expressions for the corresponding measure, note that the
limit distribution in (\ref{lscdfSnspm}) is the same as the
finite-sample distribution in (\ref{fscdfS}) with
$n^{1/2}\theta_{i,n}/(\sigma_n\xi_{i,n})$ and $n^{1/2}\eta_{i,n}$
having settled down to their limiting values $\nu_i$ and $e_i$,
respectively, so that the formulas in (\ref{fspdfS0}) and
(\ref{fspdfSnot0}) can be used.
\end{proof}

%%%%%%%%%%%%%%%%%%%%%%%%%%%%%%%%%%%%%%%%%%%%%%%%%%%%%%%%%%%%%%%%%%%%%%%%%%%%
\smallskip
%%%%%%%%%%%%%%%%%%%%%%%%%%%%%%%%%%%%%%%%%%%%%%%%%%%%%%%%%%%%%%%%%%%%%%%%%%%%

\begin{proof}[Proof of Proposition~\ref{lsdistASconserv}]
The proof proceeds in a similar manner as the proof of
Proposition~\ref{lsdistHconserv}. Note that for fixed $x$ the
expression inside the square brackets in (\ref{fscdfAS}) converges to
the expression inside the square brackets in (\ref{lscdfASnspm}) for
Lebesgue-almost all $s \in (0,\infty)$. Since $n-k = m$ eventually,
the dominated convergence theorem proves (\ref{lscdfASnspm}). To
conclude the expressions for the corresponding measure, note that the
limit distribution in (\ref{lscdfASnspm}) is the same as the
finite-sample distribution in (\ref{fscdfAS}) with
$n^{1/2}\theta_{i,n}/(\sigma_n\xi_{i,n})$ and $n^{1/2}\eta_{i,n}$
having settled down to their limiting values $\nu_i$ and $e_i$,
respectively, so that the formulas in (\ref{fspdfAS0}) and
(\ref{fspdfASnot0}) can be used.
\end{proof}

%%%%%%%%%%%%%%%%%%%%%%%%%%%%%%%%%%%%%%%%%%%%%%%%%%%%%%%%%%%%%%%%%%%%%%%%%%%%
\smallskip
%%%%%%%%%%%%%%%%%%%%%%%%%%%%%%%%%%%%%%%%%%%%%%%%%%%%%%%%%%%%%%%%%%%%%%%%%%%%

\begin{proof}[Proof of Proposition~\ref{lsdistHconsist}]
To prove part (a) note that by Proposition~\ref{fsdistH} the
distribution function of
$(\hat\sigma\xi_{i,n}\eta_{i,n})^{-1}(\tilde\theta_{H,i} -
\theta_{i,n})$ is given by
\begin{align}
\label{lsHconsistfs}
\begin{split}
\int_0^\infty \big[ \Phi\big(n^{1/2}\eta_{i,n}xs\big)
\ind\big(&|xs + \zeta_{i,n}| > s\big)
+ \Phi\big(n^{1/2}\eta_{i,n}(s - \zeta_{i,n})\big)
\ind\left(0 \leq xs + \zeta_{i,n} \leq s\right) \\
& + \Phi\big(n^{1/2}\eta_{i,n}(-s - \zeta_{i,n})\big)
\ind\left(-s \leq xs + \zeta_{i,n} < 0\right) \big]
\rho_{n-k}(s) \, ds,
\end{split}
\end{align} 
where $\zeta_{i,n} = \theta_{i,n}/(\sigma_n\xi_{i,n}\eta_{i,n})$ and
$\zeta_{i,n} \to \zeta_i$. Since $n^{1/2}\eta_{i,n} \to \infty$ and
$n-k = m$ eventually, the dominated convergence theorem gives that the
above display converges to
\begin{align}
\label{lsHconsistm}
\begin{split}
\int_0^\infty \big[\ind(x \geq 0) \ind(|xs + \zeta_i| & > s) + 
\ind(s > \zeta_i) \ind(0 \leq xs + \zeta_i \leq s) \\
& + \ind(s < -\zeta_i) \ind(-s \leq xs + \zeta_i < 0)\big] \rho_m(s)\,ds
\end{split}
\end{align}
for all $x \neq 0$ since then the integrand of (\ref{lsHconsistfs})
converges to the integrand of (\ref{lsHconsistm}) for Lebesgue-almost
all $s > 0$. The expression in (\ref{lsHconsistm}) clearly simplifies
to $\ind(x \geq 0)$ when $|\zeta_i| = \infty$ or $\zeta_i = 0$,
proving part 1. For $0 < |\zeta_i| < \infty$, after some elementary
calculations, we can write (\ref{lsHconsistm}) as
$$
\ind(-1 \leq x < 0) \int_{\zeta_i}^{-\zeta_i/x} \rho_m(s) \, ds + \ind(x \geq 0)
$$
when $\zeta_i > 0$, and as
$$
\ind(0 \leq x < 1) \big[\int_{-\zeta_i/x}^\infty \rho_m(s) \, ds 
+ \int_0^{-\zeta_i} \rho_m(s) \, ds \big] + \ind(x \geq 1)
$$
when $\zeta_i < 0$, yielding 2.\ and 3.\ and finishing the proof for part (a).

\medskip

For proving part (b), note that the distribution corresponding to
$\rho_{n-k}$, that is, the distribution of $\hat\sigma/\sigma_n$, now
converges to pointmass at 1 in probability. This implies that by
Slutzky's Theorem the limiting distribution of
$\hat\sigma^{-1}\alpha_{i,n} (\tilde\theta_{H,i} - \theta_{i,n}) =
(\hat\sigma/\sigma_n)^{-1}\,\sigma_n^{-1}\alpha_{i,n}
(\tilde\theta_{H,i} - \theta_{i,n})$ is the same as the one of
$\sigma_n^{-1}\alpha_{i,n} (\tilde\theta_{H,i} - \theta_{i,n})$ which
can be found in Theorem~36(b) in \cite{poesch11}.
\end{proof}

%%%%%%%%%%%%%%%%%%%%%%%%%%%%%%%%%%%%%%%%%%%%%%%%%%%%%%%%%%%%%%%%%%%%%%%%%%%%
\smallskip
%%%%%%%%%%%%%%%%%%%%%%%%%%%%%%%%%%%%%%%%%%%%%%%%%%%%%%%%%%%%%%%%%%%%%%%%%%%%

\begin{proof}[Proof of Proposition~\ref{lsdistSconsist}]
To prove part (a) note that the cdf
of $\hat\sigma^{-1}\alpha_{i,n} (\tilde\theta_{S,i} - \theta_{i,n})$
with $\alpha_{i,n} = (\xi_{i,n}\eta_{i,n})^{-1}$ is given by
\begin{equation}
\label{lsSconsistfs}
\int_0^\infty \big[ \Phi\big(n^{1/2}\eta_{i,n}(x+1)s)\big)
\ind\left(xs + \zeta_{i,n} \geq 0\right)
+ \Phi\big(n^{1/2}\eta_{i,n}(x-1)s\big)
\ind\left(xs + \zeta_{i,n} < 0\right) \big]
\rho_{n-k}(s) \, ds,
\end{equation}
again, where $\zeta_{i,n} =
\theta_{i,n}/(\sigma_n\xi_{i,n}\eta_{i,n})$ and $\zeta_{i,n} \to
\zeta_i$. Since $n^{1/2}\eta_{i,n} \to \infty$ and $n-k = m$
eventually, the dominated convergence theorem gives that the above
display converges to
\begin{equation}
\label{lsSconsistm}
\int_0^\infty \big[\ind(x \geq -1) \ind(xs + \zeta_i \geq 0) + 
\ind(x \geq 1) \ind(xs + \zeta_i < 0)\big] \rho_m(s)\,ds,
\end{equation}
for all $x \neq -1$ when $\zeta_i > 0$, for all $x \neq 1$ when
$\zeta_i < 0$, and for all $x \neq 0$ when $\zeta_i = 0$, since then
the integrand of (\ref{lsSconsistfs}) converges to the integrand of
(\ref{lsSconsistm}) for Lebesgue-almost all $s > 0$. The expression in
(\ref{lsSconsistm}) simplifies to $\ind(x \geq -1)$ and $\ind(x \geq
1)$ for $\zeta_i = \infty$ and $\zeta_i = -\infty$, respectively, and
to $\ind(x \geq 0)$ for $\zeta_i = 0$, proving parts 1.\ and 3.  For $0
< |\zeta_i| < \infty$, we can write (\ref{lsSconsistm}) as
$$
\ind(-1 \leq x < 0) \int_0^{-\zeta_i/x} \rho_m(s) \, ds + \ind(x \geq 0)
$$
when $\zeta_i > 0$, and as
$$
\ind(0 \leq x < 1) \int_{-\zeta_i/x}^\infty \rho_m(s) \, ds + \ind(x \geq 1)
$$
when $\zeta_i < 0$, yielding 2.\ and 4.\ and finishing the proof for part (a).

\medskip

For proving part (b), note that the distribution corresponding to
$\rho_{n-k}$, that is, the distribution of $\hat\sigma/\sigma_n$, now
converges to pointmass at 1 in probability. This implies that by
Slutzky's Theorem the limiting distribution of
$\hat\sigma^{-1}\alpha_{i,n} (\tilde\theta_{S,i} - \theta_{i,n}) =
(\hat\sigma/\sigma_n)^{-1}\,\sigma_n^{-1}\alpha_{i,n}
(\tilde\theta_{S,i} - \theta_{i,n})$ is the same as the one of
$\sigma_n^{-1}\alpha_{i,n} (\tilde\theta_{S,i} - \theta_{i,n})$ which
can be found in Theorem~37(b) in \cite{poesch11}.
\end{proof}

%%%%%%%%%%%%%%%%%%%%%%%%%%%%%%%%%%%%%%%%%%%%%%%%%%%%%%%%%%%%%%%%%%%%%%%%%%%%
\smallskip
%%%%%%%%%%%%%%%%%%%%%%%%%%%%%%%%%%%%%%%%%%%%%%%%%%%%%%%%%%%%%%%%%%%%%%%%%%%%

\begin{proof}[Proof of Proposition~\ref{lsdistASconsist}]
To prove part (a) note that the cdf
of $\hat\sigma^{-1}\alpha_{i,n} (\tilde\theta_{AS,i} - \theta_{i,n})$
with $\alpha_{i,n} = (\xi_{i,n}\eta_{i,n})^{-1}$ can be written as
\begin{align}
\label{lsASconsistfs}
\begin{split}
\int_0^\infty &\big[ \Phi\big(n^{1/2}\eta_{i,n}
\big((xs-\zeta_{i,n})/2 + \sqrt{((xs+\zeta_{i,n})/2)^2 + s^2}\big)\big)
\ind\left(xs + \zeta_{i,n} \geq 0\right) \\
& + \; \Phi\big(n^{1/2}\eta_{i,n}
\big((xs-\zeta_{i,n})/2 - \sqrt{((xs+\zeta_{i,n})/2)^2 + s^2}\big)\big)
\ind\left(xs + \zeta_{i,n} < 0\right) \big] \rho_{n-k}(s) \, ds,
\end{split}
\end{align}
again, where $\zeta_{i,n} =
\theta_{i,n}/(\sigma_n\xi_{i,n}\eta_{i,n})$ and $\zeta_{i,n} \to
\zeta_i$. Since $n^{1/2}\eta_{i,n} \to \infty$ and $n-k = m$
eventually, the dominated convergence theorem gives that the above
display converges to
\begin{align}
\label{lsASconsistm}
\begin{split}
\int_0^\infty &\big[
\ind\big((xs-\zeta_i)/2 + \sqrt{((xs+\zeta_i)/2)^2 + s^2} \geq 0\big)
\ind(xs + \zeta_i \geq 0) \\
& + \; 
\ind\big((xs-\zeta_i)/2 - \sqrt{((xs+\zeta_i)/2)^2 + s^2} \geq 0\big)
\ind(xs + \zeta_i < 0) \big] \rho_m(s)\,ds,
\end{split}
\end{align}
for all $x$ when $0 < |\zeta_i| < \infty$, and for all $x \neq 0$ when
$\zeta_i = 0$, since then the integrand of (\ref{lsASconsistfs})
converges to the integrand of (\ref{lsASconsistm}) for Lebesgue-almost
all $s > 0$. The expression simplifies to $\ind(x \geq 0)$ for
$\zeta_i =0$. To find the limit when $|\zeta_i| = \infty$, we first
consider the case $\zeta_i = \infty$. For large enough $n$, the
integrand in (\ref{lsASconsistfs}) becomes $\ind((xs-\zeta_{i,n})/2 +
\sqrt{((xs+\zeta_{i,n})/2)^2 + s^2} \geq 0)$ so that we need to
determine the limit of
\begin{align*}
\big((xs&-\zeta_{i,n})/2\big) + \sqrt{\big((xs+\zeta_{i,n})/2\big)^2 + s^2} \\
& = \; \big((xs-\zeta_{i,n})/2\big) + \big((xs+\zeta_{i,n})/2\big) 
\sqrt{1+ \big(2s/(xs+\zeta_{i,n})\big)^2} \\
& = \; \big((xs-\zeta_{i,n})/2\big) + 
\big((xs+\zeta_{i,n})/2\big)\big(1 + O\big((s/(xs+\zeta_{i,n})^2\big)\big) \\
& = \; xs + o(1)
\end{align*}
as $\zeta_{i,n} \to \infty$, where we have used a Taylor-expansion of
$\sqrt{1+z}$ around 0. We can therefore conclude that for all fixed $x
\neq 0$, the integrand of (\ref{lsASconsistfs}) converges for
Lebesgue-almost all $s > 0$ to $\ind(xs \geq 0)$ implying that the
weak limit of (\ref{lsASconsistfs}) is $\ind(x \geq 0)$. The proof for
$\zeta_i = -\infty$ works analogously, finishing part 1.

For $0 < |\zeta_i| < \infty$, a tedious but elementary case-by-case
analysis shows that we can write (\ref{lsASconsistm}) as
$$
\ind(-1 \leq x < 0) \int_{-x\zeta_i}^{-\zeta_i/x} \rho_m(s) \, ds + 
\ind(x \geq 0)
$$
when $\zeta_i > 0$, and as
$$
\ind(0 \leq x < 1) \big[\int_{-\zeta_i/x}^\infty \rho_m(s) \, ds +
\int_0^{-\zeta_ix} \rho_m(s) \, ds\big] 
+ \ind(x \geq 1)
$$
when $\zeta_i < 0$, yielding 2.\ and 3.\ and finishing the proof for part (a).

\medskip

For proving part (b), note that the distribution corresponding to
$\rho_{n-k}$, that is, the distribution of $\hat\sigma/\sigma_n$, now
converges to pointmass at 1 in probability. This implies that by
Slutzky's Theorem the limiting distribution of
$\hat\sigma^{-1}\alpha_{i,n} (\tilde\theta_{AS,i} - \theta_{i,n}) =
(\hat\sigma/\sigma_n)^{-1}\,\sigma_n^{-1}\alpha_{i,n}
(\tilde\theta_{AS,i} - \theta_{i,n})$ is the same as the one of
$\sigma_n^{-1}\alpha_{i,n} (\tilde\theta_{AS,i} - \theta_{i,n})$ which
can be found in Theorem~38(b) in \cite{poesch11}.
\end{proof}

%%%%%%%%%%%%%%%%%%%%%%%%%%%%%%%%%%%%%%%%%%%%%%%%%%%%%%%%%%%%%%%%%%%%%%%%%%%%
%%%%%%%%%%%%%%%%%%%%%%%%%%%%%%%%%%%%%%%%%%%%%%%%%%%%%%%%%%%%%%%%%%%%%%%%%%%%
\subsection{Proofs for Section~\ref{confs}}
%%%%%%%%%%%%%%%%%%%%%%%%%%%%%%%%%%%%%%%%%%%%%%%%%%%%%%%%%%%%%%%%%%%%%%%%%%%%
%%%%%%%%%%%%%%%%%%%%%%%%%%%%%%%%%%%%%%%%%%%%%%%%%%%%%%%%%%%%%%%%%%%%%%%%%%%%

\begin{proof}[Proof of Theorem~\ref{confknownfs}]
We first consider the hard-thresholding estimator. Observe that 
$$
\hat\theta_{H,i}/(\sigma\xi_{i,n}) = 
\left(\hat\theta_{LS,i}/(\sigma\xi_{i,n})\right) 
\ind\left(|\hat\theta_{LS,i}/(\sigma\xi_{i,n})| > \eta_{i,n}\right) 
$$ 
and that $\hat\theta_{LS,i}/(\sigma\xi_{i,n})$ is
$N(\theta_i/(\sigma\xi_{i,n}),1/n)$. \cite{poesch10} derive confidence
intervals for a hard-thresholding estimator for a Gaussian linear
regression model with orthogonal regressors and known error
variance. Identifying $\hat\theta_{LS,i}/(\sigma\xi_{i,n})$ and
$\theta_i/(\sigma \xi_{i,n})$ with $\bar y$ and $\theta$ and making
use of Proposition~2 and Theorem~5 in the above reference by noting that
\begin{align*}
P_{n,\theta,\sigma}&\left(\theta_i \in [\hat\theta_{H,i} - \sigma
  a_{i,n},\hat\theta_{H,i} + \sigma b_{i,n}]\right) \\
& = \; P_{n,\theta,\sigma}\left(\theta_i/(\sigma\xi_{i,n}) \in
\left[\hat\theta_{H,i}/(\sigma\xi_{i,n}) -
  a_{i,n}/\xi_{i,n},\hat\theta_{H,i}/(\sigma\xi_{i,n}) + 
  b_{i,n}/\xi_{i,n}\right]\right)
\end{align*}
immediately gives the result in (a) after replacing $\eta_n$ with
$\eta_{i,n}$ and $a_n$ by $a/\xi_{i,n}$. The results for soft- and
adaptive soft-thresholding in (b) and (c), respectively, follow
analogously by making use of Proposition~1 and 3, respectively, as
well as Theorem~5 in the reference mentioned above.
\end{proof}

%%%%%%%%%%%%%%%%%%%%%%%%%%%%%%%%%%%%%%%%%%%%%%%%%%%%%%%%%%%%%%%%%%%%%%%%%%%%
\smallskip
%%%%%%%%%%%%%%%%%%%%%%%%%%%%%%%%%%%%%%%%%%%%%%%%%%%%%%%%%%%%%%%%%%%%%%%%%%%%

\begin{proof}[Proof of Proposition~\ref{confknowncrude}]
Note that 
$$
P_{n,\theta,\sigma}(\theta_i \in D_{i,n}) = 
P_{n,\theta,\sigma}
\left(-d \leq (\sigma\xi_{i,n}\eta_{i,n})^{-1}(\hat\theta_i - \theta_i)
\leq d\right).
$$ 
Propositions~27(b), 28(b), and 29(b) in \cite{poesch11} show that
any accumulation point of the limiting distribution of
$(\sigma\xi_{i,n}\eta_{i,n})^{-1}(\hat\theta_i - \theta_i)$ with
respect to weak convergence is a measure concentrated on $[-1,1]$,
which proves the result for $d>1$. If $d<1$, the same propositions
show that we can always find a sequence $\theta^{(n)}$ such that the
distribution of $(\sigma\xi_{i,n}\eta_{i,n})^{-1}(\hat\theta_i -
\theta_{i,n})$ is concentrated on one of the endpoints of the interval
$[-1,1]$ implying
$$
P_{n,\theta,\sigma}
\left(-d \leq (\xi_{i,n}\eta_{i,n})^{-1}(\hat\theta_i - \theta_{i,n})
\leq d\right) \longrightarrow 0
$$ 
and proving the claim for $d < 1$. Finally, use the expressions for the
infimal coverage probabilities in Theorem~\ref{confknownfs} to see
the result for $d=1/2$.
\end{proof}

%%%%%%%%%%%%%%%%%%%%%%%%%%%%%%%%%%%%%%%%%%%%%%%%%%%%%%%%%%%%%%%%%%%%%%%%%%%%
\smallskip
%%%%%%%%%%%%%%%%%%%%%%%%%%%%%%%%%%%%%%%%%%%%%%%%%%%%%%%%%%%%%%%%%%%%%%%%%%%%

\begin{proof}[Proof of Proposition~\ref{confunknownHfs}]
Define $p^i_{H,n}(\theta,\sigma,a_{i,n},\eta_{i,n}) =
P_{n,\theta,\sigma}(\theta_i \in C_{H,i,n})$, the coverage probability
for the hard-thresholding estimator with known error variance. As
discussed in the beginning of Section~\ref{confunknown}, we have
\begin{align*}
\inf_{\theta \in \R,\sigma \in \R_+} & 
P_{n,\theta,\sigma}(\theta \in E_{H,i,n}) = 
\inf_{\theta \in \R} \int_0^\infty p^i_{H,n}(\theta,1,a_{i,n}s,\eta_{i,n}s)
\rho_{n-k}(s)\,ds \\
& \geq \; 
\int_0^\infty \inf_{\theta \in \R} p^i_{H,n}(\theta,1,a_{i,n}s,\eta_{i,n}s)
\rho_{n-k}(s)\,ds
\end{align*}
which equals 
\begin{align*}
\int_0^\infty \big[\Phi(n^{1/2}&(a_{i,n}/\xi_{i,n} - \eta_{i,n})s) -
  \Phi(-n^{1/2}a_{i,n}s/\xi_{i,n})\big] \rho_{n-k}(s)\,ds \\
& = \;T_{n-k}(n^{1/2}(a_{i,n}/\xi_{i,n}-\eta_{i,n})) - 
T_{n-k}(-n^{1/2}a_{i,n}/\xi_{i,n})
\end{align*}
when $a_{i,n} \geq \xi_{i,n}\eta_{i,n}/2$, and 0 for $a_{i,n} <
\xi_{i,n}\eta_{i,n}/2$, cf. Theorem~\ref{confknownfs}(a).
\end{proof}

%%%%%%%%%%%%%%%%%%%%%%%%%%%%%%%%%%%%%%%%%%%%%%%%%%%%%%%%%%%%%%%%%%%%%%%%%%%%
\smallskip
%%%%%%%%%%%%%%%%%%%%%%%%%%%%%%%%%%%%%%%%%%%%%%%%%%%%%%%%%%%%%%%%%%%%%%%%%%%%

\begin{proof}[Proof of Theorem~\ref{confequivH}]
For $p^i_{H,n}(\theta,\sigma,a_{i,n},\eta_{i,n}) =
p^i_{H,n}(\theta/\sigma,1,a_{i,n},\eta_{i,n}) =
P_{n,\theta,\sigma}(\theta_i \in C_{H,i,n})$, the coverage probability
in the known-variance case, we have 
$$
\lim_{\theta_i \to \infty} p^i_{H,n}(\theta,1,a_{i,n},\eta_{i,n})
= \Phi(n^{1/2}a_{i,n}/\xi_{i,n}) - \Phi(-n^{1/2}a_{i,n}/\xi_{i,n}),
$$
as can be seen from Proposition~19 in \cite{poesch11}. This implies that
\begin{equation}
\label{confknownHub}
\inf_{\theta \in \R^k} P_{n,\theta,\sigma}(\theta_i \in C_{H,i,n})
\leq  \Phi(n^{1/2}a_{i,n}/\xi_{i,n}) - \Phi(-n^{1/2}a_{i,n}/\xi_{i,n}),
\end{equation}
as well as
\begin{align}
\notag
\inf_{\theta \in \R^k,\sigma \in \R_+} &
P_{n,\theta,\sigma}(\theta_i \in E_{H,i,n}) \leq 
\lim_{\theta_i \to \infty} \int_0^{\infty} 
p^i_{H,n}(\theta,1,a_{i,n}s,\eta_{i,n}s) \rho_{n-k}(s)\,ds \\
\notag
& = \; \int_0^{\infty} \left[\Phi(n^{1/2}a_{i,n}s/\xi_{i,n}) - 
\Phi(-n^{1/2}a_{i,n}s/\xi_{i,n})\right] \rho_{n-k}(s)\,ds \\
\label{confunknownHub}
& = \; T_{n-k}(n^{1/2}a_{i,n}/\xi_{i,n}) - T_{n-k}(-n^{1/2}a_{i,n}/\xi_{i,n}),
\end{align}
where we have used dominated convergence for the first equality in the
above display.
%(\ref{confknownHub}) and (\ref{confunknownHub}) 

%%%%%%%%%%%%%%%%%%%%%%%%%%%%%%%%%%%%%%%%%%%%%%%%%%%%%%%%%%%%%%%%%%
Step 1: If $n^{1/2}a_{i,n}/\xi_{i,n} \to 0$, the upper bounds in
(\ref{confknownHub}) and (\ref{confunknownHub}) both converge to 0,
thus proving the claim.

%%%%%%%%%%%%%%%%%%%%%%%%%%%%%%%%%%%%%%%%%%%%%%%%%%%%%%%%%%%%%%%%%%
Step 2: Let $n^{1/2}\eta_{i,n} \to 0$. If $a_{i,n} <
\xi_{i,n}\eta_{i,n}/2$, then $n^{1/2}a_{i,n}/\xi_{i,n} \to 0$ also,
showing the theorem by Step 1. If $a_{i,n} \geq
\xi_{i,n}\eta_{i,n}/2$, we have by Theorem~\ref{confknownfs}(a) that
$\inf_{\theta \in \R^k}P_{n,\theta,\sigma}(\theta_i \in C_{H,i,n})$
differs from $\Phi(n^{1/2}a_{i,n}/\xi_{i,n}) -
\Phi(-n^{1/2}a_{i,n}/\xi_{i,n})$ only by a term that is $o(1)$ since
$\Phi$ is globally Lipschitz. The same is true for $\inf_{\theta \in
  \R^k,\sigma \in \R_+}P_{n,\theta,\sigma}(\theta_i \in E_{H,i,n})$
since then the difference between lower bound from
Proposition~\ref{confunknownHfs} and the upper bound from
(\ref{confunknownHub}) converges to zero, so that $|\inf_{\theta \in
  \R^k,\sigma \in \R_+}P_{n,\theta,\sigma}(\theta_i \in E_{H,i,n}) -
[\Phi(n^{1/2}a_{i,n}/\xi_{i,n}) - \Phi(-n^{1/2}a_{i,n}/\xi_{i,n})]| \leq
o(1) + 2\|\Phi - T_{n-k}\|_\infty \to 0$ by Polya's Theorem.

%%%%%%%%%%%%%%%%%%%%%%%%%%%%%%%%%%%%%%%%%%%%%%%%%%%%%%%%%%%%%%%%%%%%
Step 3: Assume $n^{1/2}(a_{i,n}/\xi_{i,n} - \eta_{i,n}) \to
\infty$. We then have $n^{1/2}a_{i,n}/\xi_{i,n} \to \infty$ also, so
that by Theorem~\ref{confknownfs}(a) together with
Proposition~\ref{confunknownHfs} the infimal coverage probabilities
both converge to 1, showing the claim for this step.

%%%%%%%%%%%%%%%%%%%%%%%%%%%%%%%%%%%%%%%%%%%%%%%%%%%%%%%%%%%%%%%%%%%%
Step 4: By a subsequence argument we may now assume that
$n^{1/2}a_{i,n}/\xi_{i,n}$ as well as $n^{1/2}\eta_{i,n}$ are bounded
away from zero, and that $n^{1/2}(a_{i,n}/\xi_{i,n}-\eta_{i,n})$ is
bounded from above. Note that this implies that
$a_{i,n}/(\xi_{i,n}\eta_{i,n})$ is also bounded from above by some
constant, say, $C$. By Lemma 13 in \cite{poesch10} [after identifying
  $n$ with $n-k$ and $h_n$ with $\rho_{n-k}$] we see that for every
$\eps > 0$ we can find a constant $c(\eps)>0$ such that for every $n
\geq 2$
\begin{equation}
\label{lemma}
\int_{1-c(\eps)(n-k)^{-1/2}}^{1+c(\eps)(n-k)^{-1/2}} \rho_{n-k}(s)\,ds \geq 1 -
 \eps.
\end{equation}
Now define $\theta^{(n)}(\eps)$ to have $i$-th component
$\theta_{i,n}(\eps) := a_{i,n}(1 + c(\eps)(n-k)^{-1/2})$ and set the
remaining components to arbitrary values. By Proposition~19 in
\cite{poesch11}, this choice of $\theta^{(n)}$ implies that for large
enough $n$ we have
\begin{align*}
&|p^i_{H,n}(\theta^{(n)}(\eps),1,a_{i,n},\eta_{i,n}) - 
\inf_{\theta \in \R^k}p^i_{H,n}(\theta,1,a_{i,n},\eta_{i,n})| \\ 
& = \; |\max(0,\Phi(n^{1/2}(\theta_{i,n}(\eps)/\xi_{i,n}-\eta_{i,n})) -
\Phi(-n^{1/2}a_{i,n}/\xi_{i,n})) \\
& \;\;\;\;\;\; -\max(0,\Phi(n^{1/2}(a_{i,n}/\xi_{i,n}-\eta_{i,n})) - 
\Phi(-n^{1/2}a_{i,n}/\xi_{i,n}))| \\
& \leq \; (2\pi)^{-1/2}n^{1/2}|\theta_{i,n}(\eps)/\xi_{i,n} - 
a_{i,n}/\xi_{i,n}| 
= (2\pi)^{-1/2} c(\eps)(n-k)^{-1/2} n^{1/2}a_{i,n}/\xi_{i,n} \\
& \leq \; (2\pi)^{-1/2}c(\eps)C n^{1/2}\eta_{i,n}/(n-k)^{1/2}\longrightarrow 0,
\end{align*}
where we have used the fact that $\max(0,x)$ is globally Lipschitz
with constant 1 and $\Phi(x)$ is globally Lipschitz with constant
$(2\pi)^{-1/2}$.  Moreover, for $s$ satisfying $|s-1| <
c(\eps)(n-k)^{-1/2}$ we have by the boundedness of
$a_{i,n}/(\xi_{i,n}\eta_{i,n})$ that $sa_{i,n} < \theta_{i,n}(\eps)
\leq s(a_{i,n} + \xi_{i,n}\eta_{i,n})$ for large enough $n$. For such
$s$ and $n$, this implies, in a similar manner as above,
\begin{align*}
|&p^i_{H,n}(\theta^{(n)}(\eps),1,a_{i,n}s,\eta_{i,n}s) - 
p^i_{H,n}(\theta^{(n)}(\eps),1,a_{i,n},\eta_{i,n})| \\
& = \; |\max(0,\Phi(n^{1/2}(\theta_{i,n}(\eps)/\xi_{i,n}-\eta_{i,n}s)) -
\Phi(-n^{1/2}a_{i,n}s/\xi_{i,n})) \\
& \;\;\;\;\;\; - 
\max(0,\Phi(n^{1/2}(\theta_{i,n}(\eps)/\xi_{i,n}-\eta_{i,n})) -
\Phi(-n^{1/2}a_{i,n}/\xi_{i,n}))| \\
& \leq \; (2\pi)^{-1/2}|s-1|n^{1/2}(a_{i,n}/\xi_{i,n} + \eta_{i,n})\\
& \leq \; (2\pi)^{-1/2}c(\eps)(1+C)n^{1/2}\eta_{i,n}/(n-k)^{1/2} 
\longrightarrow 0.
\end{align*}
This furthermore entails that 
\begin{align*}
& \inf_{\theta \in \R,\sigma \in \R_+} 
P_{n,\theta,\sigma}(\theta_i \in E_{H,i,n}) \; \leq \; 
\int_0^\infty p^i_{H,n}(\theta_n(\eps),1,a_{i,n}s,\eta_{i,n}s)
\rho_{n-k}(s)\,ds\\ 
& \leq \; 
\int_0^\infty \left|p^i_{H,n}(\theta_n(\eps),1,a_{i,n}s,\eta_{i,n}s) -
p^i_{H,n}(\theta^{(n)}(\eps),1,a_{i,n},\eta_{i,n})
\right|\rho_{n-k}(s)\,ds \\
& \;\;\;\;\;\;\;\;\; + p^i_{H,n}(\theta^{(n)}(\eps),1,a_{i,n},\eta_{i,n}) \\ 
& \leq \; 
\int_{1-c(\eps)(n-k)^{-1/2}}^{1+c(\eps)(n-k)^{-1/2}}
\left|p^i_{H,n}(\theta_n(\eps),1,a_{i,n}s,\eta_{i,n}s) -
p^i_{H,n}(\theta^{(n)}(\eps),1,a_{i,n},\eta_{i,n})\right|\rho_{n-k}(s)\,ds \\
& \;\;\;\;\;\;\;\;\; +\eps+p^i_{H,n}(\theta^{(n)}(\eps),1,a_{i,n},\eta_{i,n}) \\
& = \; \inf_{\theta \in \R^k}p^i_{H,n}(\theta,1,a_{i,n},\eta_{i,n}) 
+ \eps + o(1)
= \; \inf_{\theta \in \R^k}P_{n,\theta,\sigma}(\theta_i \in C_{H,i,n}) + \eps
+ o(1), 
\end{align*}
where we have made use of (\ref{lemma}) and $\eps > 0$ was arbitrary. On
the other hand, Proposition~\ref{confunknownHfs} and
Theorem~\ref{confknownfs}(a) show that
\begin{align*}
\inf_{\theta \in \R^k,\sigma \in \R_+} &
P_{n,\theta,\sigma}(\theta_i \in E_{H,i,n}) \geq
\max[0,T_{n-k}(n^{1/2}(a_{i,n}/\xi_{i,n} - \eta_{i,n})) -
  T_{n-k}(-n^{1/2}a_{i,n}/\xi_{i,n})] \\
& \geq \; \max[0,\Phi(n^{1/2}(a_{i,n}/\xi_{i,n} - \eta_{i,n})) -
  \Phi(-n^{1/2}a_{i,n}/\xi_{i,n})] - 2\|\Phi - T_{n-k}\|_\infty \\
& = \; \inf_{\theta \in \R^k}
P_{n,\theta,\sigma}(\theta_i \in C_{H,i,n}) - 2\|\Phi - T_{n-k}\|_\infty, 
\end{align*}
where $\|\Phi - T_{n-k}\|_\infty \to 0$ when $n-k
\to \infty$ by Polya's Theorem, finally proving the claim.
\end{proof}

%%%%%%%%%%%%%%%%%%%%%%%%%%%%%%%%%%%%%%%%%%%%%%%%%%%%%%%%%%%%%%%%%%%%%%%%%%%%
\smallskip
%%%%%%%%%%%%%%%%%%%%%%%%%%%%%%%%%%%%%%%%%%%%%%%%%%%%%%%%%%%%%%%%%%%%%%%%%%%%

\begin{proof}[Proof of Proposition~\ref{confunknownSfs}]
Define $p^i_{S,n}(\theta,\sigma,a_{i,n},\eta_{i,n}) =
P_{n,\theta,\sigma}(\theta_i \in [\hat\theta_{S,i} - \sigma
  a_{i,n},\hat\theta_{S,i} - \sigma a_{i,n}])$, the coverage
probability for the soft-thresholding estimator with known error
variance. We have
\begin{align*}
& T_{n-k}(n^{1/2}(a_{i,n}/\xi_{i,n}-\eta_{i,n})) - 
T_{n-k}(n^{1/2}(-a_{i,n}/\xi_{i,n}-\eta_{i,n})) \\
& = \; \int_0^\infty \big[\Phi(n^{1/2}(a_{i,n}/\xi_{i,n} - \eta_{i,n})s) -
  \Phi(n^{1/2}(-a_{i,n}/\xi_{i,n}-\eta_{i,n})s)\big] \rho_{n-k}(s)\,ds \\
& = \int_0^\infty \lim_{\theta_i \to \infty} 
p^i_{S,n}(\theta,1,a_{i,n}s,\eta_{i,n}s) \rho_{n-k}(s)\,ds 
= \lim_{\theta_i \to \infty}  \int_0^\infty 
p^i_{S,n}(\theta,1,a_{i,n}s,\eta_{i,n}s) \rho_{n-k}(s)\,ds \\
& \geq \; \inf_{\theta \in \R^k} P_{n,\theta,1}(\theta_i \in E_{S,i,n})
\; \geq \; \int_0^\infty \inf_{\theta \in \R^k}  
p^i_{S,n}(\theta,1,a_{i,n}s,\eta_{i,n}s) \rho_{n-k}(s)\,ds \\
& = \; \int_0^\infty \big[\Phi(n^{1/2}(a_{i,n}/\xi_{i,n} - \eta_{i,n})s) -
  \Phi(n^{1/2}(-a_{i,n}/\xi_{i,n}-\eta_{i,n})s)\big] \rho_{n-k}(s)\,ds \\
& = \; T_{n-k}(n^{1/2}(a_{i,n}/\xi_{i,n}-\eta_{i,n})) - 
T_{n-k}(n^{1/2}(-a_{i,n}/\xi_{i,n}-\eta_{i,n})),
\end{align*}
where the second equality can be seen from Proposition~20 in
\cite{poesch11}, the third equality is due to dominated convergence,
and the second-last equality comes from Theorem~\ref{confknownfs}(b).
\end{proof}

%%%%%%%%%%%%%%%%%%%%%%%%%%%%%%%%%%%%%%%%%%%%%%%%%%%%%%%%%%%%%%%%%%%%%%%%%%%%
\smallskip
%%%%%%%%%%%%%%%%%%%%%%%%%%%%%%%%%%%%%%%%%%%%%%%%%%%%%%%%%%%%%%%%%%%%%%%%%%%%

\begin{proof}[Proof of Proposition~\ref{confunknownASfs}]
Define $p^i_{AS,n}(\theta,\sigma,a_{i,n},\eta_{i,n}) =
P_{n,\theta,\sigma}(\theta_i \in [\hat\theta_{AS,i} - \sigma
  a_{i,n},\hat\theta_{AS,i} - \sigma a_{i,n}])$, the coverage
probability for the adaptive soft-estimator with known error
variance. As discussed in the beginning of Section~\ref{confunknown},
we have
\begin{align*}
\inf_{\theta \in \R,\sigma \in \R_+} & 
P_{n,\theta,\sigma}(\theta_i \in E_{AS,i,n}) = 
\inf_{\theta \in \R^k} \int_0^\infty p^i_{AS,n}(\theta,1,a_{i,n}s,\eta_{i,n}s)
\rho_{n-k}(s)\,ds \\
& \geq \; 
\int_0^\infty \inf_{\theta \in \R^k} p^i_{AS,n}(\theta,1,a_{i,n}s,\eta_{i,n}s)
\rho_{n-k}(s)\,ds \\
& = \; \int_0^\infty \big[\Phi(n^{1/2}(a_{i,n}/\xi_{i,n} - \eta_{i,n})s) -
\Phi(-n^{1/2}s\sqrt{(a_{i,n}/\xi_{i,n})^2+\eta^2_{i,n}})\big] \rho_{n-k}(s)\,ds
 \\
& = \;T_{n-k}(n^{1/2}(a_{i,n}/\xi_{i,n}-\eta_{i,n})) - 
T_{n-k}(-n^{1/2}\sqrt{(a_{i,n}/\xi_{i,n})^2+\eta^2_{i,n}})
\end{align*}
where the second-last equality holds by Theorem~\ref{confknownfs}(c).
\end{proof}

%%%%%%%%%%%%%%%%%%%%%%%%%%%%%%%%%%%%%%%%%%%%%%%%%%%%%%%%%%%%%%%%%%%%%%%%%%%%
\smallskip
%%%%%%%%%%%%%%%%%%%%%%%%%%%%%%%%%%%%%%%%%%%%%%%%%%%%%%%%%%%%%%%%%%%%%%%%%%%%

\begin{proof}[Proof of Theorem~\ref{confequivAS}]
The proof proceeds in a similar manner as the proof of
Theorem~\ref{confequivH}. For
$p^i_{AS,n}(\theta,\sigma,a_{i,n},\eta_{i,n}) =
p^i_{AS,n}(\theta/\sigma,1,a_{i,n},\eta_{i,n}) =
P_{n,\theta,\sigma}(\theta_i \in C_{AS,i,n})$, the coverage
probability in the known-variance case, we have
$$
\lim_{\theta_i \to \infty} p^i_{AS,n}(\theta,1,a_{i,n},\eta_{i,n})
= \Phi(n^{1/2}a_{i,n}/\xi_{i,n}) - \Phi(-n^{1/2}a_{i,n}/\xi_{i,n}),
$$ 
as can be seen from Proposition~21 in \cite{poesch11}. This implies that
\begin{equation}
\label{confknownASub}
\inf_{\theta \in \R^k} P_{n,\theta,\sigma}(\theta_i \in C_{AS,i,n}) \leq 
\Phi(n^{1/2}a_{i,n}/\xi_{i,n}) - \Phi(-n^{1/2}a_{i,n}/\xi_{i,n})
\end{equation}
as well as
\begin{align}
\notag
\inf_{\theta \in \R^k,\sigma \in \R_+} &
P_{n,\theta,\sigma}(\theta_i \in E_{AS,i,n}) \leq 
\lim_{\theta_i \to \infty} \int_0^{\infty} 
p^i_{AS,n}(\theta,1,a_{i,n}s,\eta_{i,n}s) \rho_{n-k}(s)\,ds \\
\notag
& = \; \int_0^{\infty} \left[\Phi(n^{1/2}a_{i,n}s/\xi_{i,n}) - 
\Phi(-n^{1/2}a_{i,n}s/\xi_{i,n})\right] \rho_{n-k}(s)\,ds \\
\label{confunknownASub}
& = \; T_{n-k}(n^{1/2}a_{i,n}/\xi_{i,n}) - T_{n-k}(-n^{1/2}a_{i,n}/\xi_{i,n}),
\end{align}
where we have used dominated convergence for the first equality in the
above display.

%%%%%%%%%%%%%%%%%%%%%%%%%%%%%%%%%%%%%%%%%%%%%%%%%%%%%%%%%%%%%%%%%%
Step 1: If $n^{1/2}a_{i,n}/\xi_{i,n} \to 0$, the upper bounds in
(\ref{confknownASub}) and (\ref{confunknownASub}) both converge to 0,
thus proving the claim.

%%%%%%%%%%%%%%%%%%%%%%%%%%%%%%%%%%%%%%%%%%%%%%%%%%%%%%%%%%%%%%%%%%
Step 2: Let $n^{1/2}\eta_{i,n} \to 0$. By Theorem~\ref{confknownfs}(c)
$\inf_{\theta \in \R^k}P_{n,\theta,\sigma}(\theta_i \in C_{H,i,n})$
differs from $\Phi(n^{1/2}a_{i,n}/\xi_{i,n}) -
\Phi(-n^{1/2}a_{i,n}/\xi_{i,n})$ only by a term that is $o(1)$ since
$\Phi$ is globally Lipschitz and $0 \leq
n^{1/2}\sqrt{(a_{i,n}/\xi_{i,n})^2 + \eta_{i,n}^2} -
n^{1/2}a_{i,n}/\xi_{i,n} \leq n^{1/2}\eta_{i,n} \to 0$. The same is
true for $\inf_{\theta \in \R^k,\sigma \in
  \R_+}P_{n,\theta,\sigma}(\theta_i \in E_{H,i,n})$ since then the
difference between the lower bound from Proposition~\ref{confunknownASfs}
and the upper bound from (\ref{confunknownASub}) converges to zero by
a similar reasoning, so that $\inf_{\theta \in \R^k,\sigma \in
  \R_+}P_{n,\theta,\sigma}(\theta_i \in E_{H,i,n}) -
[\Phi(n^{1/2}a_{i,n}/\xi_{i,n}) - \Phi(-n^{1/2}a_{i,n}/\xi_{i,n})] =
o(1) + 2\|\Phi - T_{n-k}\|_\infty \to 0$ by Polya's Theorem.

%%%%%%%%%%%%%%%%%%%%%%%%%%%%%%%%%%%%%%%%%%%%%%%%%%%%%%%%%%%%%%%%%%%%
Step 3: Assume $n^{1/2}(a_{i,n}/\xi_{i,n} - \eta_{i,n}) \to
\infty$. We then have $n^{1/2}\sqrt{(a_{i,n}/\xi_{i,n})^2 +
  \eta_{i,n}^2} \to \infty$ also, so that by
Theorem~\ref{confknownfs}(c) together with
Proposition~\ref{confunknownASfs} the infimal coverage probabilities
both converge to 1, showing the claim for this step.

%%%%%%%%%%%%%%%%%%%%%%%%%%%%%%%%%%%%%%%%%%%%%%%%%%%%%%%%%%%%%%%%%%%%
Step 4: By a subsequence argument we may now assume that
$n^{1/2}a_{i,n}/\xi_{i,n}$ as well as $n^{1/2}\eta_{i,n}$ are bounded
away from zero, and that $n^{1/2}(a_{i,n}/\xi_{i,n}-\eta_{i,n})$ is
bounded from above. Note that this implies that
$a_{i,n}/(\xi_{i,n}\eta_{i,n})$ is also bounded from above by some
constant, say, $C$. Again, for an arbitrary $\eps > 0$ define
$\theta^{(n)}(\eps)$ to have $i$-th component $\theta_{i,n}(\eps) :=
a_{i,n}(1 + c(\eps)(n-k)^{-1/2})$, where $c(\eps)$ is the
constant from (\ref{lemma}) and set the remaining components to
arbitrary values. By Proposition~21 in \cite{poesch11}, this choice of
$\theta^{(n)}$ implies that
$p^i_{AS,n}(\theta^{(n)}(\eps),1,a_{i,n},\eta_{i,n})$ is given by
\begin{align*}
\Phi&\left(n^{1/2}((a_{i,n}-\theta_{i,n}(\eps))/(2\xi_{i,n}) 
+ \sqrt{((a_{i,n}+\theta_{i,n}(\eps))/(2\xi_{i,n}))^2 +\eta_{i,n}^2})\right)\\
& - \; \Phi\left(n^{1/2}(-(a_{i,n}+\theta_{i,n}(\eps))/(2\xi_{i,n}) 
+\sqrt{((-a_{i,n}+\theta_{i,n}(\eps))/(2\xi_{i,n}))^2 + \eta_{i,n}^2})\right).
\end{align*}
Now define $\delta_{i,n} = a_{i,n}c(\eps)(n-k)^{-1/2}$. Using
Theorem~\ref{confknownfs}(c), the fact that $\Phi$ is globally
Lipschitz with constant $(2\pi)^{-1/2}$, and the elementary inequality
$|\sqrt{x} - \sqrt{y}| \leq |x-y|/(2\sqrt{z})$ for $\min(x,y) \geq z >
0$ applied for $z=(\xi_{i,n}\eta_{i,n})^2$, some lengthy but
elementary calculations yield
\begin{align*}
&|p^i_{AS,n}(\theta^{(n)}(\eps),1,a_{i,n},\eta_{i,n}) - 
\inf_{\theta \in \R^k}p^i_{AS,n}(\theta,1,a_{i,n},\eta_{i,n})| \\ 
& \leq \; \left|\Phi\left((n^{1/2}/\xi_{i,n})(a_{i,n}+\delta_{i,n}/2
-\sqrt{(\delta_{i,n}/2)^2+\xi_{i,n}^2\eta_{i,n}^2})\right)
- \Phi\left((n^{1/2}/\xi_{i,n})(a_{i,n}-\xi_{i,n}\eta_{i,n})\right) \right| \\
& \;\;\; + \left|\Phi\left((n^{1/2}/\xi_{i,n})(\delta_{i,n}/2
-\sqrt{(a_{i,n} + \delta_{i,n}/2)^2+\xi_{i,n}^2\eta_{i,n}^2})\right) - 
\Phi\left(-(n^{1/2}/\xi_{i,n})\sqrt{a_{i,n}^2 + \xi_{i,n}^2\eta_{i,n}^2}\right)
\right| \\
& \leq \; (2\pi)^{-1/2}(n^{1/2}/\xi_{i,n})\big[
\delta_{i,n} + \left|\sqrt{(\delta_{i,n}/2)^2+\xi_{i,n}^2\eta_{i,n}^2} - 
\xi_{i,n}\eta_{i,n}\right| \\ 
& \;\;\; + \left| \sqrt{(a_{i,n} + \delta_{i,n}/2)^2+\xi_{i,n}^2\eta_{i,n}^2} 
- \sqrt{a_{i,n}^2+\xi_{i,n}^2\eta_{i,n}^2} \right|
\big] \\
& \leq \; (2\pi)^{-1/2}(n^{1/2}/\xi_{i,n}) \big[1.5\delta_{i,n} 
+ (2\xi_{i,n}\eta_{i,n})^{-1}(\delta_{i,n}a_{i,n} + \delta_{i,n}^2/4)\big] \\
& \leq \; 0.5 (2\pi)^{-1/2}(n^{1/2}/\xi_{i,n})\delta_{i,n}
\big[3 + C + 0.25 \delta_{i,n}/(\xi_{i,n}\eta_{i,n}) \big] \\
& \leq \; 0.5 (2\pi)^{-1/2}c(\eps)Cn^{1/2}\eta_{i,n}(n-k)^{-1/2}
\big[3 + C +0.25Cc(\eps)(n-k)^{-1/2}\big]
\longrightarrow 0.
\end{align*}
Moreover, for $s$ satisfying $|s-1| < c(\eps)(n-k)^{-1/2}$ we have
that $sa_{i,n} < \theta^{(n)}(\eps)$ for all $n$. Again, with some
lengthy but elementary calculations using the above mentioned
inequality twice with
$z=(\xi_{i,n}\eta_{i,n}(1-c(\eps)(n-k)^{-1/2})^2$ we have for $n$
large enough satisfying $1-c(\eps)(n-k)^{-1/2} \geq 1/2$ (entailing
$(2\sqrt{z})^{-1} \leq (\xi_{i,n}\eta_{i,n})^{-1}$, as well as
$1+c(\eps)(n-k)^{-1/2} \leq 3/2$ and $|s+1| \leq 5/2$) that
\begin{align*}
|&p^i_{AS,n}(\theta^{(n)}(\eps),1,a_{i,n}s,\eta_{i,n}s) - 
p^i_{AS,n}(\theta^{(n)}(\eps),1,a_{i,n},\eta_{i,n})| \\
& \leq \; (2\pi)^{-1/2} (n^{1/2}/\xi_{i,n}) \bigg[a_{i,n}|s-1| \\
& \;\;\; + 
\left|\sqrt{((a_{i,n}s+\theta_{i,n}(\eps))/2)^2 + \xi_{i,n}^2\eta_{i,n}^2s^2} -
\sqrt{((a_{i,n}+\theta_{i,n}(\eps))/2)^2 + \xi_{i,n}^2\eta_{i,n}^2} \right|\\
& \;\;\; + 
\left|\sqrt{((a_{i,n}s-\theta_{i,n}(\eps))/2)^2 + \xi_{i,n}^2\eta_{i,n}^2s^2} -
\sqrt{((a_{i,n}-\theta_{i,n}(\eps))/2)^2 + \xi_{i,n}^2\eta_{i,n}^2} 
\right|\bigg]\\
& \leq \; (2\pi)^{-1/2} (n^{1/2}/\xi_{i,n})|s-1| \big[a_{i,n}
+ (\xi_{i,n}\eta_{i,n})^{-1}(a_{i,n}^2|s+1|/2 + a_{i,n}\theta_{i,n}(\eps)
+ 2\xi_{i,n}^2\eta_{i,n}^2|s+1|)\big] \\
& \leq \; (2\pi)^{-1/2}c(\eps)n^{1/2}\eta_{i,n}(n-k)^{-1/2}\big[5 + C + 
(11/4)C^2\big] \longrightarrow 0.
\end{align*}
This furthermore implies that
\begin{align*}
& \inf_{\theta \in \R,\sigma \in \R_+} 
P_{n,\theta,\sigma}(\theta_i \in E_{AS,i,n}) \; \leq \; 
\int_0^\infty p^i_{AS,n}(\theta_n(\eps),1,a_{i,n}s,\eta_{i,n}s)\rho_{n-k}(s)\,ds
\\  & \leq \; 
\int_0^\infty \left|p^i_{AS,n}(\theta_n(\eps),1,a_{i,n}s,\eta_{i,n}s) -
p^i_{AS,n}(\theta^{(n)}(\eps),1,a_{i,n},\eta_{i,n})
\right|\rho_{n-k}(s)\,ds \\
& \;\;\;\;\;\;\;\;\; + p^i_{AS,n}(\theta^{(n)}(\eps),1,a_{i,n},\eta_{i,n}) \\ 
& \leq \; 
\int_{1-c(\eps)(n-k)^{-1/2}}^{1+c(\eps)(n-k)^{-1/2}}
\left|p^i_{AS,n}(\theta_n(\eps),1,a_{i,n}s,\eta_{i,n}s) -
p^i_{AS,n}(\theta^{(n)}(\eps),1,a_{i,n},\eta_{i,n})\right|\rho_{n-k}(s)\,ds \\
& \;\;\;\;\;\;\;\;\; +\eps+p^i_{AS,n}(\theta^{(n)}(\eps),1,a_{i,n},\eta_{i,n})\\
& = \; \inf_{\theta \in \R^k}p^i_{AS,n}(\theta,1,a_{i,n},\eta_{i,n}) 
+ \eps + o(1)
= \; \inf_{\theta \in \R}P_{n,\theta,\sigma}(\theta_i \in C_{AS,i,n}) + \eps
+ o(1), 
\end{align*}
where $\eps > 0$ was arbitrary. On the other hand,
Proposition~\ref{confunknownASfs} and Theorem~\ref{confknownfs}(c) show
that
\begin{align*}
\inf_{\theta \in \R^k,\sigma \in \R_+} &
P_{n,\theta,\sigma}(\theta_i \in E_{AS,i,n}) \\
 & \geq \; T_{n-k}(n^{1/2}(a_{i,n}/\xi_{i,n} - \eta_{i,n})) -
  T_{n-k}(-n^{1/2}\sqrt{(a_{i,n}/\xi_{i,n})^2+\eta_{i,n}^2}) \\
& \geq \; \Phi(n^{1/2}(a_{i,n}/\xi_{i,n} - \eta_{i,n})) -
  \Phi(-n^{1/2}\sqrt{(a_{i,n}/\xi_{i,n})^2+\eta_{i,n}^2}) - 
2\|\Phi - T_{n-k}\|_\infty \\
& = \; \inf_{\theta \in \R^k}
P_{n,\theta,\sigma}(\theta_i \in C_{AS,i,n}) - 2\|\Phi - T_{n-k}\|_\infty, 
\end{align*}
where $\|\Phi - T_{n-k}\|_\infty \to 0$ when $n-k
\to \infty$ by Polya's Theorem, finally proving the claim.
\end{proof}

%%%%%%%%%%%%%%%%%%%%%%%%%%%%%%%%%%%%%%%%%%%%%%%%%%%%%%%%%%%%%%%%%%%%%%%%%%%%
\smallskip
%%%%%%%%%%%%%%%%%%%%%%%%%%%%%%%%%%%%%%%%%%%%%%%%%%%%%%%%%%%%%%%%%%%%%%%%%%%%

\begin{proof}[Proof of Proposition~\ref{confunknowncrude}]
Note that 
$$
P_{n,\theta,\sigma}(\theta_i \in F_{i,n}) = 
P_{n,\theta,\sigma}
\left(-d \leq (\hat\sigma\xi_{i,n}\eta_{i,n})^{-1}(\tilde\theta_i - \theta_i)
\leq d\right).
$$ 
Propositions~\ref{lsdistHconsist}-\ref{lsdistASconsist} show that any
accumulation point of the limiting distribution of the sequence
$(\hat\sigma\xi_{i,n}\eta_{i,n})^{-1}(\tilde\theta_i - \theta_i)$ with
respect to weak convergence is a measure concentrated on $[-1,1]$,
which proves the result for $d>1$. If $d<1$, the same propositions
show that we can always find a sequence $\theta^{(n)}$ such that the
distribution of $(\hat\sigma\xi_{i,n}\eta_{i,n})^{-1}(\tilde\theta_i -
\theta_{i,n})$ is concentrated on one of the endpoints of the interval
$[-1,1]$ implying
$$
P_{n,\theta,\sigma}
\left(-d \leq (\hat\sigma\xi_{i,n}\eta_{i,n})^{-1}(\tilde\theta_i-\theta_{i,n})
\leq d\right) \longrightarrow 0,
$$ 
proving the claim for $d < 1$ also. 
%Finally, use
%Propositions~\ref{confunknownHfs}, \ref{confunknownSfs}, and
%\ref{confunknownASfs} to show the result for $d=1/2$.
\end{proof}

\end{appendix}

%%%%%%%%%%%%%%%%%%%%%%%%%%%%%%%%%%%%%%%%%%%%%%%%%%%%%%%%%%%%%%%%%%%%%%%%%%%%
%%%%%%%%%%%%%%%%%%%%%%%%%%%%%%%%%%%%%%%%%%%%%%%%%%%%%%%%%%%%%%%%%%%%%%%%%%%%
%%%%%%%%%%%%%%%%%%%%%%%%%%%%%%%%%%%%%%%%%%%%%%%%%%%%%%%%%%%%%%%%%%%%%%%%%%%%

\newpage

\bibliographystyle{ecrev}

\bibliography{journalsFULL,economet,stat}

\end{document}